\newcommand{\Px}{ P }
\newcommand{\Qx}{ \mathbb{Q} }
\newcommand{\Ex}{ E }
\numberwithin{equation}{section}
\def\esssup_#1{\underset{#1}{\mathrm{ess\,sup\, }}}
\def\essinf_#1{\underset{#1}{\mathrm{ess\,inf\, }}}
\def\argmax_#1{\underset{#1}{\mathrm{arg\,max\, }}}
\def\argmin_#1{\underset{#1}{\mathrm{arg\,min\, }}}
\newcommand{\Fx}{\mathbb{F} }
\newcommand{\lc}{\langle}
\newcommand{\rc}{\rangle}
\newcommand{\F}{\mathcal{F}}
\newcommand{\R}{\mathbb{R}}
\newcommand{\I}{\mathds{1}}
\newcommand{\LW}{\overline{W}}
\newcommand{\Ac}{{\bf(A$_{C}$)}}
\newcommand{\Ad}{{\bf(A$_{D}$)}}
\newcommand{\Af}{{\bf(A$_{F}$)}}
\newcommand{\ASP}{$P(d\omega)\otimes dt$-a.s.}
\newcommand{\ASQ}{$Q^M(d\omega)\otimes dt$-a.s.}
\definecolor{linkcolor}{rgb}{0,0,0.502}
\definecolor{urlcolor}{rgb}{1,0,0}
\definecolor{DRed}{rgb}{0.5,  0.00,  0.00}
\definecolor{Blue}{rgb}{0.00,  0.00,  1.00}
\definecolor{Green}{rgb}{0.0,  0.4,  0.0}
\newcolumntype{I}{!{\vrule width 1.2pt}}
\newlength\savedwidth
\newlength\savewidth
\def\EMAIL#1{\href{mailto:#1}{#1}}
\def\URL#1{\href{#1}{#1}}
\newtheorem{theorem}{Theorem}[section]
\newtheorem{definition}{Definition}[section]
\newtheorem{proposition}[theorem]{Proposition}
\newtheorem{remark}[theorem]{Remark}
\newtheorem{lemma}[theorem]{Lemma}
\newtheorem{example}{Example}[section]
\begin{document}
\RUNAUTHOR{L. Bo, A. Capponi and C. Zhou}
\RUNTITLE{Power Forward Performance Process}

\TITLE{Power Forward Performance in Semimartingale Markets with Stochastic Integrated Factors}
\ARTICLEAUTHORS{
\AUTHOR{Lijun Bo}
\AFF{School of Mathematics and Statistics,  Xidian University, Xi'an 710126, P.R. China\\ \EMAIL{lijunbo@xidian.edu.cn}}
\AUTHOR{Agostino Capponi}
\AFF{Department of Industrial Engineering and Operations Research, Columbia University, New York, 10027, NY, USA\\ \EMAIL{ac3827@columbia.edu}}
\AUTHOR{Chao Zhou}
\AFF{Department of Mathematics and Risk Management Institute, National University of Singapore, Singapore\\ \EMAIL{matzc@nus.edu.sg}  \URL{}}
}

\ABSTRACT{We study the forward investment performance process (FIPP) in an incomplete semimartingale market model with closed and convex portfolio constraints, when the investor's risk preferences are of the power form. We provide necessary and sufficient conditions for the existence of such FIPP. In a semimartingale factor model, we show that the FIPP can be recovered as a triplet of processes {which admit an integral representation with respect to semimartingales}. Using an integrated stochastic factor model, {we relate the factor representation of the triplet of processes} to the smooth solution of an ill-posed partial integro-differential Hamilton-Jacobi-Bellman (HJB) equation. We develop explicit constructions for the class of time-monotone FIPPs, generalizing existing results from Brownian to semimartingale market models.

\vspace{0.1cm}
\noindent{\bf AMS 2000 subject classification}: 3E20, 60J20.
}

\KEYWORDS{Forward performance process; semimartingale market; portfolio constraints; ill-posed HJB equation; time-monotone process}
\maketitle

\section{Introduction}

The classical approach to expected utility maximization, pioneered by \cite{Merton}, is to fix a planning horizon, and specify a utility function to measure the investment performance at the end of the horizon. Despite Merton's portfolio problems admitting analytically tractable solutions with direct economic interpretations, they fail to capture several behavioral features of investor's decision making. Importantly, the chosen utility function is static, and the investor's risk preferences are specified once for all future times.

To overcome these limitations, a novel approach to portfolio selection was introduced by \cite{MusiZari08}. This approach does not require the investor to fix her risk preferences beforehand, but rather gives her the flexibility to dynamically adapt them. Concretely, as opposed to fixing a market model and a terminal utility, the investor starts with an initial investment performance measure, and then updates it over time as the factors driving the market dynamics evolve. The evolution of this forward investment performance process, or FIPP, is driven by a forward-in-time version of the dynamic programming principle, designed to guarantee time consistency. In financial terms, this means that the investor updates her risk preferences so that her past investment choices, viewed from the perspective of her current risk preferences, are still optimal. Throughout the paper, we will also refer to this process as a forward performance process, or simply as a forward process.

{The forward utility framework allows capturing risk preferences which evolve over time and which are sensitive to changes in investor's wealth. The study of \cite{StrubZhou2020} shows that the curvature of the Arrow--Pratt risk-tolerance measure determines how risk preferences change with passage of time. Their theoretical findings imply that investors become more risk-tolerant as they grow older. 
\cite{StrubZhou2020} additionally show that risk preferences are constant with respect to time if and only if the initial utility function of a forward utility pair exhibits constant relative risk aversion. It thus follows that the class of power forward preferences considered in this paper possesses a risk-tolerance measure which is invariant over time. Nevertheless, these preferences are still allowed to depend on the investor's wealth.}

The evolution of the forward performance process can in general be described by a stochastic partial differential equation (SPDE) which, due to degeneracy and nonlinearity, presents several technical challenges. In particular, the volatility of the forward performance process  depends on the process itself and its gradient. Hence, existing results on existence and uniqueness of solutions to fully non-linear SPDEs are not applicable (see, for instance, \cite{MusiZari10}). \cite{MusiZari10} and \cite{MusiZari10a} study the case of zero volatility, and find that the solution to the corresponding SPDE can be characterized by a class of so-called time-monotone processes. A time-monotone process is a composition of deterministic and stochastic inputs, where the deterministic input is a space-time function satisfying an ill-posed PDE, and the stochastic input is a finite variation process. Another widely studied setting is that of homothetic forward performance processes, in which the dependence on the investor's wealth is either {exponential or of the} power form. \cite{Zitkovic} provides a dual characterization of the exponential forward performance processes in a general semimartingale market model, and develops an explicit parametrization for markets driven by It\^{o}-processes. \cite{choulli17} study power forward performance processes in a locally bounded semimartingale market, but without portfolio constraints. A few studies have considered forward performance processes in the factor form (see \cite{MusiZari10}, \cite{NadTeh17}, \cite{Shkolnikov} and \cite{AvanesyanShkolnikovSircar18}), in which the SPDE simplifies to a so-called forward HJB equation.  In a complete Black-Scholes (BS) market with stochastic factors, \cite{NadTeh17} characterize both the power forward performance process and the optimal strategy in terms of a forward HJB equation defined on a semi-infinite time interval. For the case of incomplete BS markets, the portfolio selection problem under the forward performance criterion has been investigated by \cite{Shkolnikov}, who develop an expansion formula for the underlying ill-posed HJB equation. \cite{AvanesyanShkolnikovSircar18} analyze forward performance processes of the factor form, in which the randomness only enters through a stochastic factor process. They solve the associated ill-posed PDE, and generalize the Widder's theorem of \cite{NadTeh17} to establish a local forward investment performance process (FIPP) of separable power factor form.

Our work is also related to recent studies on forward investment, consumption and entropic risk measures by \cite{LiangZari17}, \cite{chongetal2018} and \cite{chongliang2018}. All these studies focus on Brownian driven markets. \cite{LiangZari17} and \cite{chongetal2018} apply the so-called discount BSDE method, previously used in the analysis of ergodic BSDEs (see, e.g., \cite{Richou2009}, \cite{cohenhu2013} and \cite{LiZhao19}), to establish existence and uniqueness of Markovian solutions for their infinite horizon BSDEs. They introduce an arbitrary positive parameter in the  BSDE to make the driver strictly monotone in the first solution component, and then apply Theorem 3.3 in \cite{BriandConfortola} to obtain existence and uniqueness of their BSDE solution.

In this paper, we study the power forward performance process in a semimartingale market with portfolio constraints. We provide necessary and sufficient conditions for the existence of such a process, exploiting the multiplicative decomposition of a strictly positive special semimartingale. {Because of portfolio constraints, the optimal trading strategy cannot be characterized via a first-order condition as in \cite{choulli17}. Rather, the optimization problem yielding the optimal trading strategy is a constrained extremum problem. Using results from convex analysis, we transform the constrained problem to an unconstrained one, and explicitly characterize the recession function and cone of the objective function of the transformed problem when the return process is of the L\'evy type.} In a factor market model $(R,Y)$, where both the return process $R$ and the factor process $Y$ are semimartingales, we use the Jacod's decomposition to relate {the construction of a power FIPP to a triplet of processes, whose first component admits an integral representation w.r.t. the semimartingale $Y$.} We then extend our analysis to  time-monotone processes in the semimartingale factor form, by allowing the forward performance criterion to depend on the integral of the semimartingale factor process, and hence on its sample path. We establish closed-form representations of forward processes in this extended setup by characterizing the input function as the classical solution of a forward HJB equation. We construct such a solution using the factor representation of {the above discussed triplet of processes.}

The paper proceeds as follows. In Section \ref{sec:themodel}, we describe the semimartingale market model. In Section \ref{sec:forwardutility}, we provide necessary and sufficient conditions for the existence of power FIPPs in this setting. In Section \ref{sec:BSDE}, we study the structure of power FIPPs in a market model enhanced with an additional semimartingale factor process, and relate them to {a triplet of processes which admit an integral representation w.r.t. the semimartingale factor.} In Section~\ref{sec:FHJB}, we relate the factor representation of {the triplet} to the smooth solution of a  forward HJB equation. Some technical proofs of supporting results are delegated to the Appendix. We collect  mathematical symbolism in the following table, and refer to it whenever needed in the main body.

\noindent {\bf List of symbols}

\noindent {\footnotesize\begin{tabular}{ll}
		\hline
		${\cal B}(\R^n)$ & Space of Borel sets on $\R^n$\\
		$E^Q[~\cdot~]$ & Expectation under a probability measure $Q$\\
		$[M,M]^Q$ & Quadratic variation process $([M^{i},M^{j}]^Q)_{i,j=1,\ldots,n}$ under $Q$\\
		$\lc M,M\rc^Q$ & Predictable compensator of $[M,M]^Q$ under $Q$\\
		${\cal O}$ (${\cal P}$) & $\sigma$-algebra on $\Omega\times[0,\infty)$, generated by c\`adl\`ag (continuous) adapted processes\\
		$(\tilde{\Omega},\tilde{\cal O},\tilde{\cal P})$ & ($\Omega\times[0,\infty)\times\R^n$, ${\cal O}\otimes{\cal B}(\R^n)$, ${\cal P}\otimes{\cal B}(\R^n)$)\\
		${\cal A}_{loc}^{Q,+}$ & Set of locally $Q$-integrable increasing processes \\
		${\cal M}_{loc}^Q$ (${\cal M}_{loc}^{Q,c}$) & Set of (continuous) $Q$-local martingales vanishing at time $0$\\
		${\cal M}_{loc}^{Q,+}$ & Set of scalar local martingales $M\in{\cal M}_{loc}^Q$ satisfying $1+\Delta M>0$\\
		${\cal V}^Q$ & Set of adapted processes with finite variation under $Q$\\
		${\cal H}^{Q,2}$ (${\cal H}_{loc}^{Q,2}$) & Set of (locally) square-integrable martingales under $Q$\\
		$L^{Q,2}(M)$ ($L_{loc}^{Q,2}(M)$) & Set of predictable processes $H$ s.t. $H^{\top}\cdot\lc M^c,M^c\rc^{Q}H$ is  (locally)\\
~~~~& $Q$-integrable if $M\in{\cal H}_{loc}^{Q,2}$\\
	
		\hline
\end{tabular}}

\section{The Model}\label{sec:themodel}

This section presents the modeling framework. In  Section~\ref{sec:marketmodel}, we describe the market return process. In Section~\ref{sec:FIPP}, we define the set of admissible trading strategies, and the forward investment performance criterion. In  Section~\ref{sec:constraints}, we introduce the set of portfolio constraints. Throughout the paper, let $(\Omega,\F,\Px)$ be the original probability space where the filtration $\Fx=(\F_t)_{t\geq0}$ satisfies the usual conditions and $\F_0$ is $\Px$-trivial. All processes considered are assumed to be $\Fx$-adapted.

\subsection{Semimartingale market model}\label{sec:marketmodel}
We consider an $n$-dimensional return process described by a c\`{a}dl\`ag semimartingale $R{=}(R_t)_{t\geq0}$. Let $(B,C,\nu)$ be the predictable characteristics of $R$ relative to a truncation function $h: \R^n\to\R^n$. Then, the canonical representation of $R$ is given by (cf.\ Theorem II.2.34 in \cite{JacodShiryaev}), $P$-a.s.
\begin{align}\label{eq:Rdecom}
R = B+ R^c + h(u)*(\mu-\nu) + (u-h(u))*\mu,
\end{align}
where $B\in{\cal P}\cap{\cal V}^P$ is the predictable finite variation (FV) part of $R$, $R^c \in{\cal M}_{loc}^{P,c}$ is the continuous local martingale part of $R$, and $\mu$ is the random measure for the jumps of $R$ with predictable compensator $\nu$. We assume that the predictable characteristics $(B,C,\nu)$ admit the form:
\begin{align}\label{eq:bcaR0}
B=\int_0^{\cdot}b_tdt,\quad C=\int_0^{\cdot}c_tdt,\quad \nu(dt,du)=F_t(du)dt,
\end{align}
where $F$ is a predictable kernel satisfying $F_t(\{0\})=0$, $\int(|u|^2\wedge1)F_t(du)<\infty$, and $b,c$ are predictable processes. It follows from \eqref{eq:bcaR0} that $R$ is quasi-left-continuous (QLC), which implies that all jumps of $R$ are totally inaccessible stopping times. 
The price process of the $n$ risky assets is given by the Dol\'eans-Dade exponential ${\cal E}(R)$. In addition to the risky assets, the investor has also access to a money market account which accrues zero interest rate. An $\R^n$-valued trading strategy $\pi$ is a predictable $R$-integrable process, where the $i$-th entry denotes the fraction of wealth invested in the $i$-th risky asset. Using the self-financing condition, the wealth process  satisfies $X^{\pi,x} = x + X_{-}^{\pi,x}\pi\cdot R$, where $x>0$ is the investor's initial wealth and $\pi\cdot R:=\int_0^{\cdot}\pi_tdR_t$ is the stochastic integral w.r.t. $R$. Hence, we can write $X^{\pi,x}=x{\cal E}(\pi\cdot R)$.

\subsection{Admissible trading strategies and FIPP}\label{sec:FIPP}

We define the class of admissible trading strategies and the forward investment performance process (FIPP) {under an arbitrary probability measure equivalent to $P$ with density process ${\cal E}(M)$}, where $M\in{\cal M}_{loc}^{P,+}$. More precisely, for any $M\in{\cal M}_{loc}^{P,+}$, we define the following class of probability measures:
\begin{align}\label{eq:setQ}
{\cal Q}_M := \left\{Q\sim P;~\frac{dQ}{dP}\Big|_{\F_t}={\cal E}(M)_t,~t\geq0\right\}.
\end{align}
Obviously, $P\in{\cal Q}_M$ with $M\equiv1$. Let $Q\in{\cal Q}_M$, ${\cal C}_t(\omega):[0,\infty)\times\Omega\to{\cal B}(\R^n)$ be a predictable set-valued process and $U_t^Q(x,\omega):[0,\infty)\times(0,\infty)\times\Omega\to\R$ be a progressively measurable random field. We remark that any FIPP is necessarily of this form, i.e., a stochastic process of utilities that, at each time $t$, depends on the investor's current wealth amount $x$.
\begin{definition}[Set of Admissible Trading Strategies]\label{def:control-set}
An $\R^n$-valued predictable process $\pi$ is said to be ${\cal C}$-constrained under $Q$ if $\pi_t(\omega)\in{\cal C}_t(\omega)$ for all $(t,\omega)\in[0,\infty)\times\Omega$. The class $\Gamma_{\cal C}^Q$ of admissible investment strategies associated with $U^Q$ includes all ${\cal C}$-constrained, predictable {and} $R$-integrable processes $\pi$ that satisfy $\pi^{\top}\Delta R\geq-1$, $Q$-a.s., and s.t. for any $x>0$, $U_{t}^Q(x{\cal E}_{t}(\pi\cdot R))^{-}$, $t\geq0$, is of class {\rm(DL)} under $Q$.
\end{definition}

We next provide the definition of a FIPP with constraint set ${\cal C}$ under a probability measure $Q\in{\cal Q}_M$, {where} $M\in{\cal M}_{loc}^{P,+}$ (see \cite{MusiZari10} for the definition in the absence of portfolio constraints):
\begin{definition}[FIPP]\label{def:FIPP}
{The random field $U^Q$ is a $Q$-FIPP with optimal trading strategy $\pi^*\in\Gamma_{\cal C}^Q$ if it satisfies the following self-generating properties:
\begin{itemize}
  \item[{\rm(i)}] $Q$-a.s., for any $t\geq0$, $x\to U_t^Q(x)$ is concave and increasing;
  \item[{\rm(ii)}] For any $x>0$, and $\pi\in\Gamma_{\cal C}^Q$, the process $U^Q(X^{\pi,x}):=(U_t^Q(x{\cal E}_{t}(\pi\cdot R)))_{t\geq0}$ is a $Q$-supermartingale;
  \item[{\rm(iii)}] For any $x>0$, $U^Q(X^{\pi^*,x})$ is a $Q$-martingale.
\end{itemize}}
\end{definition}

\subsection{Budget constraint}\label{sec:constraints}

For any $\pi\in\Gamma_{\cal C}^Q$ {where} $Q\in{\cal Q}_M$ {and} $M\in{\cal M}_{loc}^{P,+}$, the condition $\pi^{\top}\Delta R\geq-1$ in Definition \ref{def:control-set} is related to the budget constraint of an admissible trading strategy, as specified by~\cite{Kara-Kard}. Let $(B^{M},C^{M},\nu^{M})$ be the predictable characteristics of $R$ under $Q$. It follows from Theorem~III.3.24 in \cite{JacodShiryaev} that
\begin{align}\label{eq:bcaR}
B^{M}=B+\int_0^{\cdot}c_s\alpha_sds+h(u)(\xi(u)-1)*\nu,\quad C^{M}=C,\quad \nu^{M}=\xi*\nu,
\end{align}
where $\alpha=(\alpha^1,\ldots,\alpha^n)^{\top}$ is a predictable process and $\xi$ is a $\tilde{\cal P}$-measurable nonnegative process, which are determined by
\begin{align}\label{eq:alphaphieqn}
\xi=1+M_{\mu}^P(\Delta M|\tilde{{\cal P}}),\quad \lc M^c,R^c\rc^P=\int_0^{\cdot}c_s\alpha_sds.
\end{align}
Here, denote by $M_{\mu}^{P}(\cdot)$ the positive measure such that $M_{\mu}^{P}(W) = E^{P}[(W*\mu)_{\infty}]$ for all measurable nonnegative functions $W$, and $M_{\mu}^P(W|\tilde{\cal P})$ the $M_{\mu}^P$-a.s. unique $\tilde{\cal P}$-measurable function $W'$ s.t. $M_{\mu}^P(W U) = M_{\mu}^P(W'U)$ for all nonnegative $\tilde{\cal P}$-measurable functions $U$ (see \cite{JacodShiryaev}, page 170).
\begin{remark}\label{rem:changexialpha}
Theorem~III.3.24 in \cite{JacodShiryaev} gives
\begin{align}\label{eq:alphaphieqn0}
\xi=\frac{1}{{\cal E}(M)_{-}}M_{\mu}^P({\cal E}(M)|\tilde{{\cal P}}),\quad \lc {\cal E}(M)^c,R^{c}\rc^P=\int_0^{\cdot}{\cal E}(M)_{s}c_s\alpha_sds.
\end{align}
It follows from \eqref{eq:alphaphieqn0} that ${\cal E}(M)_{-}(\xi-1)=M_{\mu}^P(\Delta{\cal E}(M)|\tilde{{\cal P}})={\cal E}(M)_{-}M_{\mu}^P(\Delta M|\tilde{{\cal P}})$. Because ${\cal E}(M)>0$, it then follows that $\xi=1+M_{\mu}^P(\Delta M|\tilde{{\cal P}})$. Moreover, using the relation $\lc{\cal E}(M)^c,R^c\rc^P={\cal E}(M)_{-}\cdot\lc M^c,R^c\rc^P$, it follows from \eqref{eq:alphaphieqn0} that $\lc M^c,R^c\rc^P=\int_0^{\cdot}c_s\alpha_sds$. We then obtain the identities in \eqref{eq:alphaphieqn}.
\end{remark}

In view of \eqref{eq:bcaR}, the differential characteristics of $R$ under $Q$ is given by
\begin{align}\label{eq:diffbcaR}
b^{M}=b+c\alpha+h(u)(\xi(u)-1)* F,~~c^{M}=c,~~ F^M(du)=\xi(u)F(du).
\end{align}
Then, using \eqref{eq:Rdecom} and \eqref{eq:bcaR}, it follows that the continuous local martingale part of $R$ under $Q$ is
\begin{align}\label{eq:RMci}
R^{M,c} = R^{c} - \int_0^{\cdot}c_s\alpha_sds.
\end{align}
The budget constraint of an admissible strategy under $Q$ can be formally specified via the set-valued mapping
\begin{align}\label{eq:closeU}
{\cal C}_0^{M} := \{\pi\in\R^n;\ {F}^{M}(\{u\in\R_0^n;\ \pi^{\top}u+1<0\})=0\},
\end{align}
where $\R_0:=\R\setminus\{0\}$. Note that ${\cal C}_0^{M}$ depends on $(t,\omega)$, because ${F}^{M}$ does. To lighten notation, we avoid to explicitly specify the dependence on {$(t,\omega)$}. The predictability of such a process follows directly from Lemma 9.4 in~\cite{Kara-Kard}.  Moreover, ${\cal C}_0^{M}$ is closed, convex and contains the origin. Thus, $\pi_t(\omega)\in{\cal C}_t(\omega)\cap{\cal C}_{0,t}^M(\omega)$, for any $\pi\in\Gamma_{\cal C}^Q$. Let ${\cal C}_0$ be the set-valued mapping \eqref{eq:closeU} with $F^M$ replaced by $F$, i.e., ${\cal C}_0$ is the budget constraint of an admissible strategy under $P$. Then, it can be easily verified that ${\cal C}_0={\cal C}_0^{M}$, $P(d\omega)\otimes dt$-a.s.
We also introduce the predictable set-valued process of null-investment given by
\begin{align}\label{eq:NQ}
{\cal N}^Q:=\left\{\pi\in\R^n;\ c^{M}{\pi}=0,\ F^{M}(\{u\in\R_0^n;\ \pi^{\top}u\neq0\})=0,\ \pi^{\top}b^{M}=0\right\}.
\end{align}
Under $Q\in{\cal Q}_M$, the wealth process remains the same if one invests according to the strategy specified by a vector in ${\cal N}^Q$. For any $(t,\omega)\in[0,\infty)\times\Omega$,  ${\cal N}_t^Q(\omega)$ is a linear subspace of $\R^n$ and hence an affine set which contains the origin. This implies that it is closed (it is also relatively open).

{We next list the technical assumptions under which the results in this paper will be derived: 
\begin{itemize}
  \item[{\Ac}] For $(t,\omega)\in[0,\infty)\times\Omega$, the constraint set ${\cal C}_t(\omega)$ in Definition \ref{def:control-set} is compact, convex and contains the origin.
\end{itemize}
Under Assumption {\Ac}, the recession cone of the constraint set ${\cal C}_t(\omega)$ is given by $0^+{\cal C}_t(\omega):=\cap_{\lambda>0}\lambda{\cal C}_t(\omega)$ (see, e.g., Corollary 8.3.2 in~\cite{Rockafellar}). 
\begin{itemize}
  \item[{\Af}] For $p\in(0,1)$, under the probability measure $Q\in{\cal Q}_M$, the predictable kernel of $R$ given in \eqref{eq:diffbcaR} satisfies
  \begin{align}\label{eq:xpintegrableFR}
\int_{|u|>1}|u|^p F^{M}(du)<+\infty,\quad Q(d\omega)\otimes dt\mbox{-a.s.}
\end{align}
\end{itemize}

We also introduce a scalar exponentially special $P$-semimartingale $D=(D_t)_{t\geq0}$ which will be used in Section~\ref{sec:forwardutility} to define the power FIPP. Let $(B^D,C^D,\nu^D(dt,dv))=(\int_0^{\cdot}b_s^Dds,\int_0^{\cdot}c_s^Dds,F_t^D(dv)dt)$ be the predictable characteristics of $D$ under $P$. Then the canonical representation of $D$ relative to a truncation function $h^D(v)$, $v\in\R$, is given by, $P$-a.s., $D=D_0 + B^D + D^c + h^D(v)*(\mu^D-\nu^D)+(v-h^D(v))*\mu^D$. By Proposition 8.26 in \cite{JacodShiryaev}, the exponential specialty of $D$ is equivalent to imposing one of the following conditions:
\begin{itemize}
  \item[{\Ad}] (i) $(e^v-1-h^D(v))*\nu^D\in{\cal V}^P$;~~~(ii)  $e^v{\mathds{1}}_{v>1}*\nu^D\in{\cal V}^P$.
\end{itemize}
}

\section{Power FIPP: Characterization and Optimal Strategy}\label{sec:forwardutility}

This section characterizes a class of power $P$-FIPPs. Section \ref{sec:decomp} develops an explicit multiplicative decomposition of a power random field. Section \ref{sec:conditions} then provides the exact conditions under which the decomposed random field is a power $P$-FIPP. Section \ref{sec:strategy} studies the optimal trading strategy associated with the FIPP.

\subsection{Multiplicative decomposition of a power random field}\label{sec:decomp}

Let $p\in(-\infty,0)\cup(0,1)$ and $D$ be the scalar $P$-semimartingale introduced at the end of Section~\ref{sec:constraints}. Consider a random field $U$ of the form
\begin{align}\label{eq:UtxL}
U_t(x) := \frac{e^{D_t}}{p} x^p,\quad (t,x)\in[0,\infty)\times(0,\infty).
\end{align}
We omit the dependence of $U_t(x)$ on $p$ to lighten notation. It follows from Assumption {\Ad}-(i) or (ii) that $L:=e^D$ is a special $P$-semimartingale that admits the decomposition $L=e^{D_0}+M^L+V^L$, $P$-a.s., where $M^L\in{\cal M}_{loc}^P$ and $V^L\in{\cal V}^P\cap{\cal P}$. We provide more details about this decomposition in {the following} Lemma~\ref{lem:ReptildeMA}.
\begin{lemma}\label{lem:ReptildeMA}
Let Assumption {\Ad}-{\rm(i)} or {\rm(ii)} hold. Then, it holds that, $P$-a.s.
\begin{align}\label{eq:eD-decom}
L=e^{D_0} {\cal E}(M){\cal E}(V).
\end{align}
The pair $(M,V)\in{\cal M}_{loc}^P\times({\cal V}^P\cap{\cal P})$ admits the following representation:
\begin{align}\label{eq:hatAhatM}
M&=D^c+ (e^v-1)*(\mu^D-\nu^D),~~~V= B^D+ \frac{C^{D}}{2}+\{e^v-1-h^D(v)\}*\nu^D,
\end{align}
where the jump process of $M$ is given by $\Delta M=(e^{\Delta D}-1){\I}_{\Delta D\neq0}$, and $V$ is continuous, i.e., $\Delta V=0$.
\end{lemma}

\subsection{Characterization of $U$ as a  $P$-FIPP}\label{sec:conditions}

In this section, we identify the conditions under which the random field $U$ in (\ref{eq:UtxL}) is a $P$-FIPP. The multiplicative decomposition of $U$ given in~\eqref{eq:eD-decom}, under the conditions of Lemma~\ref{lem:ReptildeMA}, yields the following representation: for $(t,x)\in[0,\infty)\times(0,\infty)$,
\begin{align}\label{eq:UtxL2}
U_t(x) = U_0(x){\cal E}_t(M){\cal E}_t(V)={\cal E}_t(M){U}^V_t(x),~~\text{with}~U_0(x)=\frac{1}{p}x^pe^{D_0},
\end{align}
where the random field $U^V$ is defined by
\begin{align}\label{eq:tildeU}
U_t^V(x):=U_0(x){\cal E}_t(V),\quad (t,x)\in[0,\infty)\times(0,\infty).
\end{align}
By Lemma~\ref{lem:ReptildeMA}, we have $M\in{\cal M}_{loc}^{P,+}$, and hence ${\cal E}(M)>0$. Thus, ${\cal E}(M)$ may be viewed as a local density process (see also~\cite{LipShir}, pp. 220-221). Next, we use the decomposition~\eqref{eq:UtxL2} to develop a local change of measure that characterizes $U$ as a $P$-FIPP. For $t\geq0$, define
\begin{align}\label{eq:QM}
\frac{dQ^M}{dP}\bigg|_{\F_t} = {\cal E}_t(M).
\end{align}
\begin{lemma}\label{lem:equifipp}
Assume that the probability measure $Q^M$ defined in \eqref{eq:QM} belongs to ${\cal Q}_M$. Then, the following statements hold:
\begin{itemize}
\item[{\rm(i)}] $\Gamma_{\cal C}^{P}=\Gamma_{\cal C}^{Q^M}$, i.e., the set of admissible strategies under $P$ coincides with the corresponding set under $Q^M$;
\item[{\rm(ii)}] the random field $U$ given by \eqref{eq:UtxL2} is a $P$-FIPP with optimal trading strategy $\pi^*\in\Gamma_{\cal C}^{P}$ if and only if the random field $U^V$ given by \eqref{eq:tildeU} is a $Q^M$-FIPP with  optimal trading strategy $\pi^*\in\Gamma_{\cal C}^{Q^M}$.
\end{itemize}
\end{lemma}
Next, we characterize the $P$-semimartingale $D$ that makes $U$ given by \eqref{eq:UtxL2} a $P$-FIPP with corresponding optimal trading strategy $\pi^*\in\Gamma_{\cal C}^P$. The main result of this section is stated as follows:
\begin{theorem}\label{prop:snforward}
Let Assumption {\Ac} and the conditions of Lemma~\ref{lem:ReptildeMA}-\ref{lem:equifipp} hold. {The random field $U$ given by \eqref{eq:UtxL2} is a $P$-FIPP with optimal trading strategy $\pi^*\in\Gamma_{\cal C}^{P}$} if and only if the following conditions hold, \ASP
\begin{itemize}
\item[{\rm(i)}] ${\cal E}(\beta^M\cdot R^{M,c}+\overline{W}^M(u)*(\mu-\nu^{M})){\cal E}(M)$ is a $P$-martingale, where $R^{M,c}$ and $\nu^M$ are respectively given by \eqref{eq:RMci} and \eqref{eq:bcaR}.
\item[{\rm(ii)}] there exists $\pi^*\in\Gamma_{\cal C}^{P}$ such that $\Phi_p^M(\pi^*)=\underset{\pi\in{\cal C}\cap{\cal C}_0}{\sup}\Phi_p^M(\pi)$;
\item[{\rm(iii)}] The predictable characteristics $(B^D,C^D,\nu^D(dt,dv))=(\int_0^{\cdot}b_s^Dds,\int_0^{\cdot}c_s^{D}ds,F_t^D(dv)dt)$ of $D$ satisfy the equation
\begin{align}\label{eq:optimcond000}
\sup_{\pi\in{\cal C}\cap{\cal C}_0}\Phi_p^M(\pi)+p^{-1}\left\{b^D+ \frac{c^{D}}{2}+(e^v-1-h^D(v))*F^D\right\}=0.
\end{align}
\end{itemize}
For $\pi\in{\cal C}_0$, the random mapping
\begin{align}\label{eq:Phipi000}
\Phi_p^M(\pi):=\pi^{\top}b^{M}+\frac{p-1}{2}\pi^{\top}c\pi+\{p^{-1}(1+\pi^{\top}u)^p-p^{-1}-\pi^{\top}h(u)\}*F^{M},
\end{align}
where $(b^M,F^M)$ is given by \eqref{eq:diffbcaR}, $\beta_t^M:=p\pi_t^*$, and $\overline{W}_t^M(u):=\{1+({\pi}_t^*)^{\top}u\}^{p}-1$.
\end{theorem}

{Observe that the conditions (i)-(iii) in Theorem~\ref{prop:snforward} do not depend on the choice of $D_0$. That is, the initial utility $U_0(x)=e^{D_0}\frac{x^p}{p}$ is compatible with a FIPP of the form given in~\eqref{eq:UtxL2}.} Thanks to Lemma~\ref{lem:equifipp}, the proof of Theorem~\ref{prop:snforward} is reduced to characterizing the exponentially special semimartingale $D$ that makes $U^V$ given by \eqref{eq:tildeU} a $Q^M$-FIPP with optimal trading strategy $\pi^*\in\Gamma_{\cal C}^{Q^M}$. We next state two auxiliary lemmas. For $\pi\in{\Gamma}_{\cal C}^{Q^M}$, and $(\omega,t,u)\in\Omega\times[0,\infty)\times\R^n$, define
\begin{align}\label{eq:HWpi}
W_t^{\pi}(\omega,u) &:=p^{-1}(1+\pi_t(\omega)^{\top}u)^p-p^{-1},
\end{align}
and recall the space $G_{loc}^{Q^M}(\mu)$, related to the jump measure $\mu$ of $R$ under $Q^M$, and specified in Def. II.1.27-(a) of \cite{JacodShiryaev}, page\ 72.
\begin{lemma}\label{lem:sdehatA}
Assume that the conditions of Theorem~\ref{prop:snforward} hold and the process $U^V$ given by \eqref{eq:tildeU} is a $Q^M$-FIPP. Then, for any $\pi\in{\Gamma}_{\cal C}^{Q^M}$, $\pi\in L_{loc}^{Q^M,2}(R^{M,c})$, $W^{\pi}\in G_{loc}^{Q^M}(\mu)$, and $|p^{-1}(1+\pi^{\top}u)^p-p^{-1}-\pi^{\top}h(u)|*\nu^{M}\in{\cal A}_{loc}^{Q^M,+}$. Moreover, the predictable characteristics $(B^D,C^D,\nu^D(dt,dv))=(\int_0^{\cdot} b_s^Dds,\int_0^{\cdot}c_s^{D}ds,F_t^D(dv)dt)$ of $D$ satisfy \eqref{eq:optimcond000}, \ASQ
\end{lemma}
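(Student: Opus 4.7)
The plan is to extract the canonical $Q^M$-semimartingale decomposition of $U^V(X^{\pi,x})$ for every $\pi\in\Gamma_{\cal C}^{Q^M}$, identify the local-martingale integrands with $(H^\pi,W^\pi)$, and then read off \eqref{eq:optimcond000} by combining the $Q^M$-supermartingale property with the $Q^M$-martingale property at the optimiser $\pi^*$ from Definition~\ref{def:FIPP}(iii).

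Using $U^V_t(x)=U_0(x)\mathcal{E}_t(V)$ from Lemma~\ref{lem:ReptildeMA} and $X^{\pi,x}_t=x\mathcal{E}_t(\pi\cdot R)$, I apply It\^o's formula to $y\mapsto y^p$ to write $\mathcal{E}_t(\pi\cdot R)^p=\mathcal{E}_t(\tilde N^\pi)$; inserting the canonical $Q^M$-representation of $R$ and compensating the $\mu^R$-integrand by $\nu^{M,R}$ gives
\[
\tilde N^\pi \;=\; p\,\pi\cdot R^{M,c} \;+\; \{(1+\pi^\top u)^p-1\}*(\mu^R-\nu^{M,R}) \;+\; p\,\Phi_p^M(\pi)\cdot A.
\]
Yor's product formula for stochastic exponentials then yields $\mathcal{E}(V)\mathcal{E}(\tilde N^\pi)=\mathcal{E}(K^\pi)$ with $K^\pi:=V+\tilde N^\pi+[V,\tilde N^\pi]$. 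Since $V\in\mathcal{V}^P\cap\mathcal{P}$ has no continuous local-martingale part, the bracket $[V,\tilde N^\pi]=\sum\Delta V\,\Delta\tilde N^\pi$ is a pure-jump process supported on the predictable atoms of $A$; invoking $\Delta V=\int e^v\nu^D(\{\cdot\}\times dv)-a^D$ from \eqref{eq:hatAhatMjump}, the atom-specific identities $b^D=\int h^D(v)F^D(dv)$ and $c^D=0$ on $\{\Delta A>0\}$ from \eqref{eq:propertiesR}, and the $Q^M$-conditional law of $\Delta R$ at predictable atoms of $A$, its predictable compensator merges with the drifts of $V$ and $\tilde N^\pi$ into a uniform $dA$-density for the drift of $K^\pi$:
\[
v_0 \;+\; p\lambda\,\Phi_p^M(\pi),\qquad \lambda:=1-a^D+\int e^v\nu^D(\{\cdot\}\times dv),\quad v_0:=b^D+\tfrac{c^D}{2}+(e^v-1-h^D(v))*F^D.
\]

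Because $U^V(X^{\pi,x})=U_0(x)\mathcal{E}(K^\pi)$ is a $Q^M$-supermartingale whose negative part is of class (DL) by admissibility, $K^\pi$ is $Q^M$-special, and reading off the $R^{M,c}$- and $(\mu^R-\nu^{M,R})$-integrands of its local-martingale part via the definitions in \eqref{eq:HWpi} (which, after normalisation by $\lambda$, match $p\pi$ and $(1+\pi^\top u)^p-1$) forces $H^\pi\in L^{Q^M,2}_{loc}(R^{M,c})$ and $W^\pi\in G^{Q^M}_{loc}(\mu^R)$, while local integrability of the drift gives $|p^{-1}(1+\pi^\top u)^p-p^{-1}-\pi^\top h^R(u)|*\nu^{M,R}\in\mathcal{A}^{Q^M,+}_{loc}$. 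For \eqref{eq:optimcond000}, the sign of $U^V$ coincides with $\mathrm{sgn}(p)$ and $\mathcal{E}_-(K^\pi)>0$, so the supermartingale property is equivalent to $\lambda\Phi_p^M(\pi)+v_0/p\leq 0$, $Q^M\times dA$-a.s., with equality at $\pi^*$ from the $Q^M$-martingale property of $U^V(X^{\pi^*,x})$; a standard measurable-selection argument then extends the supremum from $\Gamma_{\cal C}^{Q^M}$ to the pointwise constraint set $\mathcal{C}\cap\mathcal{C}_0^{Q^M}$, producing \eqref{eq:optimcond000}.

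The main obstacle I anticipate is the careful bookkeeping at the predictable atoms of $A$: one has to correctly identify the $Q^M$-conditional law of $\Delta R$ there, compute the compensator of $[V,\tilde N^\pi]$, and invoke the simplifications in \eqref{eq:propertiesR} to verify that both the continuous-part density $v_0+p\Phi_p^M(\pi)$ and the atomic density $\Delta V/\Delta A+p\lambda\Phi_p^M(\pi)$ collapse to the single uniform expression $v_0+p\lambda\Phi_p^M(\pi)$ demanded by \eqref{eq:optimcond000}.
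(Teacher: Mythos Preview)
Your proposal is correct and follows essentially the same route as the paper: compute the stochastic logarithm of $\mathcal{E}(\pi\cdot R)^p$, combine with $\mathcal{E}(V)$ via Yor's formula, and read off the integrability conditions and \eqref{eq:optimcond000} from the supermartingale/martingale dichotomy in Definition~\ref{def:FIPP}.

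The one place where the paper is cleaner is exactly the point you flag as your main obstacle. Rather than compensating $\tilde N^\pi$ first and then computing the compensator of $[V,\tilde N^\pi]$ atom by atom, the paper works with the \emph{uncompensated} stochastic logarithm
\[
N(\pi)=p\,\pi\cdot R+\tfrac{p(p-1)}{2}\pi^\top C^{M,R}\pi+\{(1+\pi^\top u)^p-1-p\pi^\top u\}*\mu^R
\]
and invokes the identity $[V,N(\pi)]^{Q^M}=\Delta V\cdot N(\pi)$, valid because $V\in\mathcal{V}^P\cap\mathcal{P}$ (Liptser--Shiryayev~(3.6)). Yor's formula then collapses \emph{exactly} to $\mathcal{E}(V)\mathcal{E}(N(\pi))=\mathcal{E}\bigl(V+(1+\Delta V)\cdot N(\pi)\bigr)$, so the factor $\lambda=1+\Delta V=1-a^D+\int e^v\nu^D(\{\cdot\}\times dv)$ appears automatically in front of \emph{every} component of $N(\pi)$---drift, continuous martingale, and compensated jump part---with no separate atom bookkeeping needed. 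The decomposition \eqref{eq:cond1122} then falls out directly after passing to the $Q^M$-characteristics of $R$. This shortcut removes precisely the verification you anticipated in your final paragraph; otherwise your argument and the paper's coincide.
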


For $\pi\in{\cal C}_0$, the concave function $\Phi_p^M(\pi)$ given by \eqref{eq:Phipi000} is well defined and takes values in $\R\cup\{{\rm sign}(p)\infty\}$, see, e.g., Lemma 5.3 in~\cite{Nutzmf}.

\noindent{\bf Proof of Theorem~\ref{prop:snforward}.}\quad By Lemma~\ref{lem:equifipp}, it is enough to prove that $U^V$ given by \eqref{eq:tildeU} is a $Q^M$-FIPP with the optimal trading strategy $\pi^*\in\Gamma^{Q^M}_{\cal C}$ if and only if (i)-(iii) hold.

$\Longrightarrow$ Suppose that $U^V$ given by \eqref{eq:tildeU} is a $Q^M$-FIPP with the optimal trading strategy $\pi^*\in\Gamma^{Q^M}_{\cal C}$. Then, by Definition~\ref{def:FIPP}, we have $U^V(X^{\pi^*,x})$ is a $Q^M$-martingale for any $x>0$. On the other hand, for any $\pi\in\Gamma^{Q^M}_{\cal C}$, it follows from~\eqref{eq:hatUrep} and~\eqref{eq:cond1122} in Appendix that
\begin{align}\label{eq:UtU0}
U^V(X^{\pi,x})&={U}_0(x){\cal E}\left(V+p\int_0^{\cdot}\Phi_p^M(\pi_s)ds+p\pi\cdot R^{M,c} + pW^{\pi}(u)*(\mu-\nu^{M})\right),
\end{align}
where $W^{\pi}$ have been defined in \eqref{eq:HWpi}. Because $U^V$ is a $Q^M$-FIPP, Lemma~\ref{lem:sdehatA} implies that $p\pi\in L_{loc}^{Q^M,2}(R^{M,c})$, $pW^{\pi}\in G_{loc}^{Q^M}(\mu)$, and the condition (iii) holds. Moreover, using the equality \eqref{eq:optimcond} in the proof of Lemma~\ref{lem:sdehatA}, we also have that
\begin{align}\label{eq:FUEQN}
\frac{dV}{dt}+p\sup_{\pi\in{\cal C}\cap{\cal C}_0}\Phi_p^M(\pi)=0.
\end{align}
In view of \eqref{eq:UtU0}, we have that
\begin{align}\label{eq:UtU0pistar}
U^V(X^{\pi^*,x})={U}_0(x){\cal E}\left(V+p\int_0^{\cdot}\Phi_p^M(\pi_s^*)ds+\beta^M\cdot R^{M,c} + \LW^M(u)*(\mu-\nu^{M})\right)
\end{align}
is a $Q^M$-martingale, it is a local $Q^M$-martingale (clearly, it is a $\sigma$-martingale under $Q^M$). This implies the drift rate (w.r.t. $A_t=t$) of $U^V(X^{\pi^*,x})$ is zero, i.e.,
\begin{align}\label{eq:FUEQN2}
\frac{dV}{dt}+p\Phi_p^M(\pi^*)=0.
\end{align}
Taking the difference between~\eqref{eq:FUEQN} and~\eqref{eq:FUEQN2}, we deduce that
\begin{align*}
p\left(\sup_{\pi\in{\cal C}\cap{\cal C}_0}\Phi_p^M(\pi)-\Phi_p^M(\pi^*)\right)=0,
\end{align*}
which implies that the condition (ii) holds. Moreover, it follows from \eqref{eq:UtU0pistar} and \eqref{eq:FUEQN2} that
\begin{align}\label{eq:UVpistar}
U^V(X^{\pi^*,x})={U}_0(x){\cal E}\left(\beta^M\cdot R^{M,c} + \LW^M(u)*(\mu-\nu^{M})\right),
\end{align}
and hence that condition (i) holds.

$\Longleftarrow$ Suppose  conditions (i)-(iii) hold.  Lemma~\ref{lem:ReptildeMA}, together with~\eqref{eq:optimcond000}, implies that
\begin{align}\label{eq:proofeqn}
\frac{dV}{dt}+p\Phi_p^M({\pi}^*)=0,~~\sup_{\pi\in{\cal C}\cap{\cal C}_0}\Phi_p^M(\pi)=\Phi_p^M({\pi}^*).
\end{align}
Using the equations above, along with~\eqref{eq:UtU0pistar}, we obtain that
\begin{align}\label{eq:UVpistar2}
U^V(X^{\pi^*,x})={U}_0(x){\cal E}\left(\beta^M\cdot R^{M,c} + \LW^M(u)*(\mu-\nu^{M})\right).
\end{align}
Then, condition (i) implies {that} $U^V(X^{\pi^*,x})$ is a $Q^M$-martingale, where $\pi^*\in{\cal C}\cap{\cal C}_0$ is determined by \eqref{eq:proofeqn}.

We next prove that, for any $\pi\in\Gamma_{\cal C}^{Q^M}$, $U^V(X^{\pi,x})$ is a $Q^M$-supermartingale for all $x>0$. We prove this claim by considering the following two subcases:

{\bf(a)} The power parameter $p\in(0,1)$. In this case, it follows from \eqref{eq:proofeqn} that, for any $\pi\in\Gamma_{\cal C}^{Q^M}$,
\begin{align}\label{eq:driftbigger0}
p\Phi_p^M({\pi})\leq p\Phi_p^M({\pi}^*)=-\frac{dV}{dt}.
\end{align}
We next show that $U^V(X^{\pi,x})$ given in~\eqref{eq:UtU0} is a $Q^M$-supermartingale. First, observe that the drift rate (w.r.t. $A_t=t$) of the semimartingale
\begin{align*}
 V+p\int_0^{\cdot}\Phi_p^M(\pi_s)ds+p\pi\cdot R^{M,c} + pW^{\pi}(u)*(\mu-\nu^{M})
\end{align*}
is given by $\frac{dV}{dt}+p\Phi_p^M({\pi})$. It follows from \eqref{eq:driftbigger0} that the drift rate is negative. Recall that, for the process $N(\pi)$ defined in \eqref{eq:tildeN1} of the Appendix, it holds that
\begin{align*}
V+ N(\pi)=V+p\int_0^{\cdot}\Phi_p^M(\pi_s)ds+p\pi\cdot R^{M,c} + pW^{\pi}(u)*(\mu-\nu^{M}).
\end{align*}
It can be seen from \eqref{eq:tildeN1} that, for all $\pi\in\Gamma^{Q^M}_{\cal C}$, $1+\Delta(V+N(\pi))=1+\Delta N(\pi)>0$ since $\Delta V=0$, and hence, for all $\pi\in\Gamma^{Q^M}_{\cal C}$,
\begin{align}\label{eq:se1}
{\cal E}\left(V+p\int_0^{\cdot}\Phi_p^M(\pi_s)ds+p\pi\cdot R^{M,c} + pW^{\pi}(u)*(\mu-\nu^{M})\right)
\end{align}
is strictly positive. It thus follows from the inequality in~\eqref{eq:driftbigger0} that the drift rate (w.r.t. $A_t=t$) of the Dol\'eans-Dade exponential \eqref{eq:se1} is also negative. Using Proposition 11.3 in Appendix 3 of~\cite{Kara-Kard}, we conclude that the Dol\'eans-Dade exponential \eqref{eq:se1} is a $Q^M$-supermartingale, because this stochastic exponential is uniformly bounded from below. This, along with the fact that $U_0(x)>0$, leads to the conclusion that $U^V(X^{\pi,x})$ is a $Q^M$-supermartingale for all $\pi\in\Gamma_{\cal C}^{Q^M}$. Note that $U_t^V(x)$ itself is positive for all $(t,x)\in[0,\infty)\times(0,\infty)$, and hence $(U^V(X^{\pi^*,x}))^-$ is clearly of class (DL). Therefore, the vector $\pi^*\in{\cal C}_0\cap{\cal C}$ specified by \eqref{eq:proofeqn} is the optimal trading strategy.

{\bf(b)} The power parameter $p<0$. In this case, we have that
\begin{align}\label{eq:driftsmaller0}
p\Phi_p^M({\pi})\geq p\Phi_p^M({\pi}^*)=-\frac{dV}{dt}.
\end{align}
We then deduce from \eqref{eq:driftsmaller0} that the drift rate (w.r.t. $A_t=t$) of
\begin{align*}
-{\cal E}\left(V+p\int_0^{\cdot}\Phi_p^M(\pi_s)ds+p\pi\cdot R^{M,c} + pW^{\pi}(u)*(\mu-\nu^{M})\right)
\end{align*}
is negative. Because ${U}_0(x)$ is negative for $x>0$, it follows that, for any $\pi\in\Gamma^{Q^M}_{\cal C}$ and $x>0$, $U^V(X^{\pi,x})=-U^V(x{\cal E}(\pi\cdot R))^-$ is a $Q^M$-semimartingale with a negative drift rate. Because $\pi$ belongs to the class of admissible strategies $\Gamma^{Q^M}_{\cal C}$ given in Definition~\ref{def:control-set}, $U^V(x{\cal E}(\pi\cdot R))^-$ is of class (DL). Then, for any $T>0$, we have that $U_{\cdot\wedge T}^V(X^{\pi,x})$ is of class (D). Because a semimartingale of class (D) with negative drift rate is a supermartingale, we conclude that, for all $\pi\in\Gamma^{Q^M}_{\cal C}$ and $x>0$, $U_{\cdot\wedge T}^V(X^{\pi,x})$ is a $Q^M$-supermartingale, i.e., $\Ex^{Q^M}[U_{s\wedge T}^V(X^{\pi,x})|\F_t]\leq U_{t\wedge T}^V(X^{\pi,x})$ for $0\leq t\leq s<+\infty$. Because $(U_{T\wedge t}^V(X^{\pi,x}))_{T\geq0}$ is of class (D), it follows that $U^V(X^{\pi,x})$ is a $Q^M$-supermartingale by letting $T\to\infty$. This yields the condition (ii) in Definition~\ref{def:FIPP}. It remains to verify that $\pi^*\in{\cal C}\cap{\cal C}_0$ determined by \eqref{eq:proofeqn} is the optimal trading strategy. This is equivalent to proving that $(U^V(X^{\pi^*,x}))^-$ is of class (DL). From \eqref{eq:UVpistar2}, we then obtain that
$(U^V(X^{\pi^*,x}))^-=-{U}_0(x){\cal E}(\beta^M\cdot R^{M,c} + \LW^M(u)*(\mu-\nu^{M}))$
is a $Q^M$-martingale using (i). It can then be easily seen that $(U^V(X^{\pi^*,x}))^-$ is of class (DL). This completes the proof of the theorem. \hfill$\Box$

{\begin{remark}\label{rem:choullima}
Similar to our study, \cite{choulli17} employ a change of measure technique to establish a power forward performance criterion. However, there exist major technical and conceptual differences between our approach and theirs, originating from the different set of assumptions made in the two studies.
\cite{choulli17} assume the discounted risky asset price process to be a {\it locally bounded} semimartingale $S$. The proofs of their Theorem 3.2-3.3 strongly rely on the assumption of {\it local boundedness} of $S$. They use the minimal Hellinger martingale (MHM) densities to mainly characterize sufficient and necessary conditions on a process $D$ used in the construction of the power random field (see their Theorem 3.2-3.3). In addition, their analysis ignores portfolio constraints, and can thus directly use the first-order equation for optimality. It is worth highlighting that the existence of the minimal Hellinger martingale density 
heavily depends on the {\it locally boundedness} assumption of $S$ (plus some other assumptions). 
The locally boundedness of $S$ (which, in particular, implies that $\int|x|^2F(dx)<\infty$ where $F$ is the differential characteristics of the jump compensator of $S$) helps proving fundamental properties such as convexity, and first-order differentiability, of the function $\Phi_p^R(\lambda)$. This in turn makes it possible to use the first-order condition to pin down the optimal trading strategy (see Lemma 6.2 in \cite{choulli17}). Unlike their study, we do not impose that the multi-asset discount price process is {\it locally bounded}. As a result, the proof of our Theorem \ref{prop:snforward} uses a different argument because the MHM density argument is not applicable in our semimartingale market model. 
In Definition~\ref{def:control-set}, apart from the constraint set ${\cal C}$, a key condition is that  ``{$U_{t}^Q(x{\cal E}_{t}(\pi\cdot R))^{-}$, $t\in\R_+$, is of class {\rm(DL)} under $Q$}". This key condition makes it challenging to prove the equivalent characterization of the forward utility under $P$ and $Q$, compared to the proof of Proposition 2.1 in \cite{choulli17}. We prove the equivalence result in Lemma~\ref{lem:equifipp} by establishing an equivalent condition of the class of (DL) (see Lemma~A.1 in the Appendix). Additionally, we provide the equivalent characterization of the optimal trading strategies under $P$ and $Q$ in Lemma~\ref{lem:equifipp}.
\end{remark}}

\subsection{Optimal trading strategies}\label{sec:strategy}

{By} Theorem~\ref{prop:snforward}, the optimal strategy $\pi^*$ under the original probability measure $P$ can be recovered as the solution of the constrained extremum problem, with objective function $\Phi_p^M(\pi)$ defined by \eqref{eq:Phipi000}, and subject to the constraint that $\pi\in{\cal C}\cap{\cal C}_0$. For $\pi\in{\cal C}_0$, the mapping $\Phi_p^M(\pi)$ can be rewritten as:
\begin{align}\label{eq:PhiQpi2}
\Phi_p^M(\pi) &= \pi^{\top}b^{M} +\frac{p-1}{2}\pi^{\top}c\pi+I^M(\pi),
\end{align}
where we choose the truncation function $h(u)=u{\I}_{|u|\leq1}$, and the function
\begin{align}\label{eq:ipim}
I^M(\pi)&:=I_1^M(\pi)+I_2^M(\pi)\\
&=\int_{|u|\leq1}\{p^{-1}(1+\pi^{\top}u)^p-p^{-1}-\pi^{\top}u\}F^{M}(du)+\int_{|u|>1}\{p^{-1}(1+\pi^{\top}u)^p-p^{-1}\}F^{M}(du).\nonumber
\end{align}
Then, under {\Af} in Section~\ref{sec:constraints}, the optimal strategy $\pi^*$ can be completely characterized (see Lemma~A.2 in the Appendix).

If the return process $R$ is a L\'evy process and the constraint set satisfies ${\cal C}_t(\omega)=C$ for all $(t,\omega)$ (where $C\subset\R^n$ satisfies Assumption {\Ac}), the random mapping $\Phi_p^M(\pi)$ becomes deterministic. Note that the budget constraint $C_0:={\cal C}_0$ is also independent of $(t,\omega)$, and hence it is also a (deterministic) closed subset of $\R^n$. We study the constrained extremum problem with objective function $\Phi_p^M(\pi)$, and constraint set $C_0\cap C$. For $\pi \in \R^n$, define the convex function
\begin{align}\label{eq:convexfcng}
\psi(\pi) :=\left\{
          \begin{array}{cl}
            -\pi^{\top}b^{M} -\frac{p-1}{2}\pi^{\top}c\pi-\{p^{-1}(1+\pi^{\top}u)^p-p^{-1}-\pi^{\top}h(u)\}*F^{M}, & \pi\in C_0;\\ \\
           +\infty, & \pi\notin C_0.
          \end{array}
        \right.
\end{align}
We recall that $(b^M,F^M)$ is given by \eqref{eq:diffbcaR}. {Hence}, the constrained extremum problem with objective function $\Phi_p^M(\pi)$ and constraint set $C_0\cap C$ is equivalent to solving the unconstrained convex minimization problem
\begin{align}\label{eq:convexf}
\inf_{\pi\in\R^n} \left \{ \psi(\pi) + \delta(\pi|C) \right \},
\end{align}
where $\delta(\cdot|C)$ denotes the indicator function of the convex set $C$, i.e., it equals $0$ if $\pi\in C$, and $+\infty$ otherwise. The indicator function $\delta(\cdot|C)$ is clearly convex. In view of \eqref{eq:convexfcng}, and noting that $C\cap C_0$ contains the origin, this implies that $C\cap{\rm dom}(\psi)\neq\emptyset$, where ${\rm dom}(\psi):=\{\pi\in\R^n;\ \psi(\pi)<+\infty\}$ is the efficient domain of $\psi$. The condition~\eqref{eq:xpintegrableFR} also implies that $\psi(\pi)$ is a closed, proper and convex function on $\R^n$.

The following lemma provides explicit representations of the recession function $\psi0^+$ of $\psi$, and of the recession cone of $\psi$. First, we introduce the following sets:
\begin{align}\label{eq:Lambda}
\Lambda_{-}(y):=\{u\in\R_0^n;\ y^{\top}u<0\},\qquad \Lambda^+(y):=\{u\in\R_0^n;\ y^{\top}u>0\}.
\end{align}
\vspace{-1cm}
\begin{lemma}\label{lem:recessionfcn}
Under Assumption {\Af}, it holds that
\begin{itemize}
\item[{\rm(i)}] If $\pi\in\{y\in\R^n;\ F^{M}(\Lambda_{-}(y))=0\}$, then
\begin{align}\label{eq:g0+case1}
\psi0^+(\pi)=\left\{
            \begin{array}{cl}
         \displaystyle     \pi^{\top}(h(u)*F^{M}-b^{M}), & c\pi=0;\\[0.6em]
         \displaystyle     +\infty, & c\pi\neq0.
            \end{array}
          \right.
\end{align}
If $\pi\in\{y\in\R^n;\ F^{M}(\Lambda_{-}(y))>0\}$, then $\psi0^+(\pi)=+\infty$.
\item[{\rm(ii)}] If $\pi\in\{y\in\R^n;\ F^{M}(\Lambda^{+}(y))=0\}$, then
\begin{align}\label{eq:g0+case2}
\psi0^+(-\pi)=\left\{
            \begin{array}{cl}
           \displaystyle   -\pi^{\top}(h(u)*F^{M}-b^{M}), & c\pi=0;\\[0.6em]
           \displaystyle   +\infty, & c\pi\neq0.
            \end{array}
          \right.
\end{align}
If $\pi\in\{y\in\R^n;\ F^{M}(\Lambda^{+}(y))>0\}$, then $\psi0^+(-\pi)=+\infty$.
\item[{\rm(iii)}] The recession cone of $\psi$ is given by
\begin{align}\label{eq:recessionconeg}
\left\{\pi\in\R^n;\ F^{M}(\Lambda_{-}(\pi))=0,\ c\pi=0,\ \pi^{\top}(h(u)*F^{M}-b^{M})\leq0\right\}.
\end{align}
\item[{\rm(iv)}]The constancy space of $\psi$ is given by ${\cal N}^{Q^M}$.
\end{itemize}
\end{lemma}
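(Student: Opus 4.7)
The plan is to work directly from the definition of the recession function. Since ${\cal C}\cap{\cal C}_0^{Q^M}$ contains the origin and $\psi(0)=0$, for every $\pi\in\R^n$ one has
\[
\psi0^+(\pi)=\lim_{\lambda\to\infty}\frac{\psi(\lambda\pi)}{\lambda},
\]
with the limit existing in $\R\cup\{+\infty\}$ by convexity of $\psi$. The whole argument reduces to evaluating this one-dimensional limit regime by regime.

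For part~(i), I would first dichotomize according to whether $\lambda\pi$ eventually leaves the budget set ${\cal C}_0^{Q^M}$. If $F^{M,R}(\Lambda_{-}(\pi))>0$, then for $\lambda$ sufficiently large the set $\{u:\lambda\pi^{\top}u+1<0\}$ has positive $F^{M,R}$-mass, so $\lambda\pi\notin{\cal C}_0^{Q^M}$ and $\psi(\lambda\pi)=+\infty$, yielding $\psi0^+(\pi)=+\infty$. Otherwise $\pi^{\top}u\geq 0$ holds $F^{M,R}$-a.s., so $\lambda\pi\in{\cal C}_0^{Q^M}$ for every $\lambda>0$, and I evaluate the limit term by term. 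The linear term contributes $-\pi^{\top}b^{M,R}$; the quadratic term $\tfrac{1-p}{2}\lambda\pi^{\top}c^{R}\pi$ blows up to $+\infty$ unless $c^R\pi=0$ (equivalently $\pi^{\top}c^{R}\pi=0$, by positive semi-definiteness of $c^{R}$), in which case it vanishes. For the jump integral I split the domain into $\{|u|\leq 1\}$ and $\{|u|>1\}$ and observe that the pointwise limits of the integrands are $\pi^{\top}h^{R}(u)$ and $0$ respectively. To justify the exchange of limit and integral I exploit the concavity of $\lambda\mapsto p^{-1}(1+\lambda\pi^{\top}u)^{p}-p^{-1}-\lambda\pi^{\top}u$, which vanishes at $\lambda=0$ and makes the normalized difference quotient monotone in $\lambda$, so that monotone convergence applies on $\{|u|\leq 1\}$; on $\{|u|>1\}$ the integrand is dominated uniformly in $\lambda\geq 1$ by a constant multiple of $|u|^{p}$, which is $F^{M,R}$-integrable by Assumption~{\Af}, so dominated convergence applies. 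Assembling the contributions produces~\eqref{eq:g0+case1}.

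Part~(ii) follows from (i) applied to $-\pi$, since $\Lambda^{+}(\pi)=\Lambda_{-}(-\pi)$. For~(iii), the recession cone of a closed proper convex function coincides with $\{\pi:\psi0^+(\pi)\leq 0\}$, and direct substitution of (i) yields the description~\eqref{eq:recessionconeg}. For~(iv), the constancy space coincides with $\{\pi:\psi0^+(\pi)=\psi0^+(-\pi)=0\}$, and combining (i) and (ii) shows that membership is equivalent to $c^{R}\pi=0$, $F^{M,R}(\{u:\pi^{\top}u\neq 0\})=0$, and $\pi^{\top}(h^{R}(u)*F^{M,R}-b^{M,R})=0$. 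Under the second condition $\pi^{\top}h^{R}(u)=0$ holds $F^{M,R}$-a.s., so the third collapses to $\pi^{\top}b^{M,R}=0$, identifying the set exactly with ${\cal N}^{Q^M}$ (using $c^{M,R}=c^{R}$, which is invariant under the equivalent change of measure).

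The main technical obstacle is justifying the passage to the limit inside the jump integral, and in particular controlling the large-jump piece $\{|u|>1\}$. Without Assumption~{\Af} the integrand need not be dominated uniformly in $\lambda$, and the monotone-concavity argument alone is not enough. The integrability condition $\int_{|u|>1}|u|^{p}F^{M,R}(du)<\infty$ provides exactly the domination tailored to $p\in(0,1)$ that is needed to compute the recession function explicitly rather than merely bound it from above.
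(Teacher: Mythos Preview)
Your proposal is correct and follows essentially the same route as the paper: both compute $\psi0^+(\pi)$ via the limit formula (the paper uses the equivalent parametrization $\lim_{\lambda\downarrow0}\lambda\psi(\lambda^{-1}\pi)$ and cites Rockafellar, Corollary~8.5.2), then read off the recession cone and constancy space from parts~(i)--(ii). Your treatment is in fact more explicit than the paper's on the jump integral---splitting into $\{|u|\le1\}$ and $\{|u|>1\}$ and invoking monotone and dominated convergence respectively, using Assumption~{\Af} for the domination---whereas the paper simply writes out $\lambda\psi(\lambda^{-1}\pi)$ and asserts that the limit yields~\eqref{eq:g0+case1}.
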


{By the claim (iv) of Lemma~\ref{lem:recessionfcn}, ${\cal N}^{Q^M}$ is the largest subspace contained in the recession cone of $\psi$. The directions of the vectors in ${\cal N}^{Q^M}$ are the directions along which $\psi$ is constant. \cite{Kardaras09mf} refers to every element in the set $\{\pi\in\R^n;\ \psi0^+(\pi)\leq0\}\setminus{\cal N}^{Q^M}$
as an Immediate Arbitrage Opportunity under $Q^M$. Essentially, those are constant portfolios that result in increasing profits. The following result is a consequence of Lemma~\ref{lem:recessionfcn}.}
\begin{proposition}\label{thm:optimum}
Let assumptions {\Ac} and {\Af} hold. If, it holds that
\begin{align}\label{eq:cond11}
\left\{\pi\in\R^n;\ F^{M}(\Lambda_{-}(\pi))=0,\ c\pi=0,\ \pi^{\top}(h(u)*F^{M}-b^{M})\leq0\right\}\cap0^+ C=\{0\},
\end{align}
then $\psi$ attains its infimum over $C$. In the case that $C$ is additionally polyhedral, 
if for any nonzero
\begin{align}\label{eq:cond22}
y\in\left\{\pi\in\R^n;\ F^{M}(\Lambda_{-}(\pi))=0,\ c\pi=0,\ \pi^{\top}(h(u)*F^{M}-b^{M})\leq0\right\}\cap0^+C{,}
\end{align}
it holds that $y\in{\cal N}^{Q^M}$, then $\psi$ achieves its infimum over $C$.
\end{proposition}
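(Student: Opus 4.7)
The plan is to recognize Proposition~\ref{thm:optimum} as a direct application of the classical existence theorems for the infimum of a closed proper convex function on a closed convex set, due to Rockafellar (Theorems~27.1 and~27.3 of~\cite{Rockafellar}), once Lemma~\ref{lem:recessionfcn} has supplied the explicit descriptions of the recession cone $\{\pi:\psi0^+(\pi)\leq0\}$ and of the constancy space of $\psi$. All the substantive analytic work has already been done in that lemma, so what remains is essentially a matching of hypotheses.

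First I would verify the prerequisites for the convex minimization framework. The discussion following~\eqref{eq:convexfcng} shows that under Assumption~{\Af} the mapping $\psi$ is a closed proper convex function on $\R^n$ with $\mathrm{dom}(\psi)\supseteq C_0$, and that $C\cap\mathrm{dom}(\psi)\neq\emptyset$ since both sets contain the origin. Under Assumption~{\Ac}, the constraint set $C$ is closed and convex, so $\delta(\cdot|C)$ is likewise closed proper convex. Consequently the objective $\psi+\delta(\cdot|C)$ of the unconstrained problem~\eqref{eq:convexf} is closed proper convex, and its recession function at $\pi$ is the sum $\psi0^+(\pi)+\delta(\pi|0^+C)$. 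Hence the recession cone of $\psi+\delta(\cdot|C)$ coincides with $\{\pi:\psi0^+(\pi)\leq0\}\cap0^+C$.

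For the first assertion I would combine Lemma~\ref{lem:recessionfcn}(iii), which identifies $\{\pi:\psi0^+(\pi)\leq0\}$ with the cone on the left-hand side of~\eqref{eq:cond11}, with the hypothesis that this cone intersects $0^+C$ only at $\{0\}$. By Theorem~27.1(b) of~\cite{Rockafellar}, a closed proper convex function whose recession cone reduces to $\{0\}$ is inf-compact, and hence attains its infimum on $\R^n$; applied to $\psi+\delta(\cdot|C)$ this yields existence of a minimizer of $\psi$ over $C$.

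For the second assertion I would instead invoke Theorem~27.3 of~\cite{Rockafellar}, whose polyhedrality hypothesis relaxes the requirement from ``only zero direction of non-increase'' to ``every direction of non-increase in $0^+C$ is a direction of constancy of $\psi$.'' By Lemma~\ref{lem:recessionfcn}(iv) the constancy space of $\psi$ equals ${\cal N}^{Q^M}$, so this requirement is precisely~\eqref{eq:cond22}, and the conclusion follows. The only place where real care is needed is choosing the correct version of Rockafellar's existence theorem in each case and checking polyhedrality of $0^+C$ (which is inherited from $C$); I do not foresee any other technical obstacle.
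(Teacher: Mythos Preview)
Your proposal is correct and follows essentially the same route as the paper's proof: both reduce the statement to Rockafellar's existence theorems for minima of closed proper convex functions over closed convex sets, invoking Lemma~\ref{lem:recessionfcn}(iii) to identify the recession cone of $\psi$ and Lemma~\ref{lem:recessionfcn}(iv) to identify its constancy space. The only cosmetic difference is that the paper cites Theorem~27.3 of~\cite{Rockafellar} directly for both parts (the no-common-recession case and the polyhedral case), whereas you reformulate the first part via $\psi+\delta(\cdot|C)$ and invoke Theorem~27.1; these are equivalent.
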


\noindent{\bf Proof.}\quad
By Theorem 27.3 in~\cite{Rockafellar}, if $\psi$ and $C$ have no direction of recession in common, then $\psi$ attains its infimum over $C$. Using (iii) of Lemma~\ref{lem:recessionfcn}, this condition in our case is equivalent to the condition \eqref{eq:cond11}. In the case where $C$ is also polyhedral, the condition that for any nonzero element in the set \eqref{eq:cond22}, we have that $y\in{\cal N}^{Q^M}$ is equivalent to the statement that every common direction of recession of $\psi$ and $C$ is also a direction in which $\psi$ is constant, using (iv) of Lemma~\ref{lem:recessionfcn}. Hence, $\psi$ achieves its infimum over $C$ by applying the last conclusive statement in Theorem 27.3 of~\cite{Rockafellar}. \hfill$\Box$

\section{Semimartingale Factor Model}\label{sec:BSDE}

Leveraging the multiplicative decomposition of a power random field given in Section~\ref{sec:decomp}, and building upon the equivalent characterization of a power FIPP given by Theorem~\ref{prop:snforward}, we study a class of random fields of the form
\begin{align}\label{eq:factorU}
U(x)=U_0(x){\cal E}(M){\cal E}(V^M),~~\text{with}~U_0(x)=\frac{1}{p}x^pe^{D_0},\quad x>0,
\end{align}
{where $D_0\in\R$ is an input which determines the initial utility}, $M\in{\cal M}_{loc}^{P,+}$ is determined by a semimartingale factor process $Y$, and $V^M\in{\cal V}^P\cap{\cal P}$ is jointly determined by the semimartingale pair $(R,Y)$ (see \eqref{eq:optimcondQn22} below). We first provide conditions for the existence of $U$ as a $P$-FIPP using Theorem~\ref{prop:snforward}. We then relate the performance process $U$ {to a triplet of processes (c.f. \eqref{eq:BSDE-sol} below), whose first component admits an integral representation w.r.t the semimartingale factor $Y$}. 

We begin by introducing the factor market model $(R,Y)$, where the stochastic factor $Y$ is an $\R^d$-valued c\`{a}dl\`ag semimartingale under $P$. Let $(B^Y,C^Y,\nu^Y(dt,dv))=(\int_0^{\cdot}b_s^Yds,\int_0^{\cdot}c_s^Yds,F_t^Y(dv)dt)$ be the predictable characteristics (and hence $(b^Y,c^Y,F^Y)$ is the differential characteristics) of $Y$, relative to a truncation function $h^Y:\R^d\to\R^d$, and $\mu^Y$ be the jump measure of $Y$. Let $(\overline{B},\overline{C},\overline{\nu})$ be the joint predictable characteristics of the $\R^{n+d}$-valued semimartingale $(R,Y)$. Then, the canonical representation of $(R,Y)$ is given by, $P$-a.s.
\begin{align}\label{eq:joint-chara0}
\left(\begin{array}{c}
R \\
Y
\end{array}\right) = \left(\begin{array}{c}
0 \\
Y_0
\end{array}\right)+\left(\begin{array}{c}
B \\
B^Y
\end{array}\right)+\left(
\begin{array}{c}
R^c \\
Y^c
\end{array}\right)+\left(\begin{array}{c}
h(u)\\
h^Y(v)
\end{array}\right)*(\overline{\mu}-\overline{\nu})+\left(\begin{array}{c}
u-h(u) \\
v-h^Y(v)
\end{array}\right)*\overline{\mu},
\end{align}
where $\overline{\mu}$ is the jump measure for $(R,Y)$, and it holds that
\begin{align*}
\overline{B}=\left(\begin{array}{c}
B \\
B^Y
\end{array}\right),~\overline{C}^{ij}=\left\lc\left(
\begin{array}{c}
R^c \\
Y^c
\end{array}\right)^i, \left(
\begin{array}{c}
R^c \\
Y^c
\end{array}\right)^j\right\rc,~\mu(dt,du)=\overline{\mu}(dt,du\times\R_{0}^{d}),~ \mu^Y(dt,dv)=\overline{\mu}(dt,\R_0^{n}\times dv).
\end{align*}

\subsection{Existence of $P$-FIPP $U$ in factor form}\label{sec:FIPPSF}

This section provides the necessary and sufficient condition for the existence of $U$ as a $P$-FIPP in the factor market $(R,Y)$. To start with, we apply the Jacod's decomposition of $M\in{\cal M}_{loc}^{P,+}$ with respect to the factor $Y$,  recalled in the lemma below.
\begin{lemma}[Theorem 3.75 in~\cite{Jacod}]\label{lem:Jacoddecom}
There exists a predictable $Y^c$-integrable process ${H}^M$, $N^M\in{\cal M}_{loc}^P$ satisfying $[Y,N^M]^P=0$, and ${U}^M\in\tilde{\cal P}$, ${G}^M\in\tilde{\cal O}$ satisfying $(|{U}^M|*\nu^Y)_{\infty}<+\infty$, $\sqrt{\sum_{0<s\leq \cdot}{G}^M(s,\Delta Y_s)^2{\I}_{\Delta Y_s\neq0}}\in{\cal A}_{loc}^{P,+}$ and $M_{\mu^Y}^P(G^M|\tilde{\cal P})=0$ such that, $P$-a.s.
\begin{align}\label{eq:logLrep2}
	M=H^M\cdot Y^c + J^M(v)*(\mu^Y-\nu^Y) +{G}^M(v)*\mu^Y + N^M.
\end{align}
Here $J^M:=U^M+\frac{\hat{U}^M}{1-a^Y}$ with $\hat{U}^M:=\int U^M(v)\nu^Y(\{\cdot\},dv)$, and $U^M$ has a version such that $\{a^Y:=\int\nu^Y(\{\cdot\},dv)=1\}\subset\{\hat{U}^M=0\}$. Moreover $\{\Delta N^M\neq0\}\subset\{\Delta Y=0\}$.
\end{lemma}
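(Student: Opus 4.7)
The statement is (a form of) Jacod's decomposition theorem for a local martingale $M$ with respect to another semimartingale $Y$, so the plan is to build the decomposition by combining a Galtchouk--Kunita--Watanabe (GKW) step for the continuous martingale part with a projection construction for the jumps, using the dual predictable projection $M_{\mu^Y}^P(\cdot\,|\,\tilde{\cal P})$ introduced in the list of symbols.

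First, I would split $M = M^c + M^d$ into continuous and purely discontinuous local martingale parts. For $M^c$, since $Y$ is QLC, $Y^c$ has a continuous predictable bracket $\langle Y^c, Y^c\rangle^P$. Applying GKW, $M^c = H^M\cdot Y^c + N^{M,c}$ with $H^M$ predictable and $Y^c$-integrable obtained as the matrix Radon--Nikodym derivative of $d\langle M^c,Y^c\rangle^P$ against $d\langle Y^c,Y^c\rangle^P$, and $N^{M,c}$ a continuous local martingale strongly orthogonal to $Y^c$, i.e.\ $[Y^c,N^{M,c}]^P=0$.

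For the jump part, I would encode the portion of $\Delta M$ explained by jumps of $Y$ by a random field $W\in\tilde{\cal O}$ with $W(s,\Delta Y_s)\mathds{1}_{\Delta Y_s\neq 0}=\Delta M_s\mathds{1}_{\Delta Y_s\neq 0}$, set $U^M := M_{\mu^Y}^P(W\,|\,\tilde{\cal P})$ and $G^M := W - U^M$, which automatically yields $M_{\mu^Y}^P(G^M\,|\,\tilde{\cal P})=0$. The integrability conditions $|U^M|*\nu^Y_\infty<\infty$ and $\bigl(\sum G^M(s,\Delta Y_s)^2\mathds{1}_{\Delta Y_s\neq 0}\bigr)^{1/2}\in{\cal A}^{P,+}_{loc}$ follow from the fact that the jumps of the local martingale $M^d$ are locally square-summable and from the definition of $M_{\mu^Y}^P(\cdot\,|\,\tilde{\cal P})$. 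Finally, the jumps of $M$ occurring at times with $\Delta Y=0$, together with the GKW remainder $N^{M,c}$, are assembled into a single local martingale $N^M$; the jump part of $N^M$ is supported on $\{\Delta Y=0\}$ and its continuous part is orthogonal to $Y^c$, so $[Y,N^M]^P=0$.

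The main obstacle, and the reason for the somewhat ornate formula $J^M = U^M + \widehat{U}^M/(1-a^Y)$, is the bookkeeping at predictable jump times of $Y$, where $\nu^Y$ has atoms ($a^Y>0$). At such times the naive compensated integral $U^M(v)*(\mu^Y-\nu^Y)$ does not reproduce the target jump of $M$, because a predictable mass $\widehat{U}^M_s=\int U^M_s(v)\,\nu^Y(\{s\}\times dv)$ must be subtracted. Rescaling the integrand by $(1-a^Y)^{-1}$ and adding back the projection of the predictable mass is precisely the algebraic adjustment that makes the jump of $J^M(v)*(\mu^Y-\nu^Y)$ coincide with $U^M(s,\Delta Y_s)$ on $\{\Delta Y\neq 0\}$; the convention that $U^M$ is chosen with $\{a^Y=1\}\subset\{\widehat{U}^M=0\}$ ensures the quotient is well defined. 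The identity $M = H^M\cdot Y^c + J^M(v)*(\mu^Y-\nu^Y) + G^M(v)*\mu^Y + N^M$ is then verified by checking that both sides have the same continuous martingale part (by GKW uniqueness) and the same jumps at every $s$, split according to whether $\Delta Y_s$ vanishes or not; as this is exactly the content of Theorem 3.75 in Jacod, I would simply cite it rather than reproduce the full measure-theoretic argument.
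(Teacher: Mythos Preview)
The paper does not prove this lemma at all: it is stated as a direct citation of Theorem~3.75 in \cite{Jacod}, with no accompanying argument. Your proposal correctly recognises this at the end, where you say you ``would simply cite it rather than reproduce the full measure-theoretic argument''; that is exactly what the paper does. The preceding sketch you give (GKW for the continuous part, $M_{\mu^Y}^P(\cdot\,|\,\tilde{\cal P})$-projection for the jumps, and the $J^M$ correction at predictable jump times) is a faithful outline of how Jacod's proof proceeds, but it is extra content relative to the paper, which treats the result as a black box.
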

We refer to $(H^M,U^M,G^M,N^M)$ as Jacod's decomposition of $M$ with respect to the factor $Y$.  Some immediate implications of this  decomposition are presented in the remark below:
\begin{remark}\label{rem:Jacoddecom}
Note that the factor semimartingale $Y$ is QLC, and we have that $U^M\in G_{loc}^P(\mu^Y)$ and $\hat{U}^M=0$. This yields that $J^M=U^M$. Moreover, it holds that $1+\Delta{M} =1+ U^M+G^M$, $M_{\mu^Y}^P$-a.s. Observe that $1+\Delta M>0$, and hence $M_{\mu^Y}^P$-a.s.
\begin{align}\label{eq:1+U+V}
	1+\Delta M=1+ {U}^M+{G}^M >0.
\end{align}
It follows from $U^M\in\tilde{\cal P}$ and $M_{\mu^Y}^P(G^M|\tilde{\cal P})=0$ that $1+ {U}^M>0$, $M_{\mu^Y}^P$-a.s.
\end{remark}

We next introduce processes that will be later used to construct the forward performance process:
\begin{align}\label{eq:BSDE-sol}
\begin{cases}
\displaystyle \Pi^M := \ln({\cal E}({M}){\cal E}(V^M)),\quad  Z^M := H^M; \\[0.5em]
\displaystyle  W^M(v):=M_{\mu^Y}^P(\ln(1+U^M(v)+G^M(v))|\tilde{\cal P});\\[0.5em]
\displaystyle  K^M(v):=\ln(1+U^M(v)+G^M(v))-M_{\mu^Y}^P(\ln(1+U^M(v)+G^M(v))|\tilde{\cal P}).
\end{cases}
\end{align}
The process $V^M\in{\cal V}^P\cap{\cal P}$ satisfies the following (stochastic) equation:
\begin{align}\label{eq:optimcondQn22}
p^{-1}\frac{dV^M}{dt}+\Phi_p^{M}({\pi}^*)=0,\qquad \Phi_p^{M}({\pi}^*)=\sup_{\pi\in{\cal C}_0\cap{\cal C}}\Phi_p^{M}(\pi).
\end{align}
For $\pi\in{\cal C}_0$, we define the random mapping as
\begin{align}\label{eq:PhiM}
\Phi^{M}_p({\pi})&:=\pi^{\top}\{b +cH + h(u)M_{\mu}^P(\Delta{M}|\tilde{\cal P})(u)*F\}+\frac{p-1}{2}\pi^{\top}c\pi\nonumber\\
&\quad+\{p^{-1}(1+\pi^{\top}u)^p-p^{-1}-\pi^{\top}h(u)\}(1+M_{\mu}^P(\Delta{M}|\tilde{\cal P})(u))*F,
\end{align}
where the process $H\in L_{loc}^{P,2}(R^c)$ is determined by the equation
\begin{align}\label{eq:H2}
[M^c,{M}^c]^P=\int_0^{\cdot}H_s^{\top}c_sH_sds.
\end{align}

Throughout the section, we consider a random field of the form \eqref{eq:factorU}, i.e.,
\begin{align}\label{eq:UtxPiM}
	U_t(x)=U_0(x){\cal E}_t(M){\cal E}_t(V^M)=U_0(x)\exp(\Pi_t^M),\quad (t,x)\in[0,\infty)\times(0,\infty).
\end{align}
It follows from \eqref{eq:UtxPiM} that $\Pi_0^M=0$.  The subsequent lemma characterizes $U$ as a $P$-FIPP, and its proof invokes Theorem~\ref{prop:snforward}.
\begin{lemma}\label{lem:BSDE-utility}
Fix ${M}\in{\cal M}_{loc}^{P,+}$. Let $Q^M\in{\cal Q}_M$ be defined by \eqref{eq:QM} and Assumption {\Ac} hold. Then, the random field $U$ given by \eqref{eq:UtxPiM} is a $P$-FIPP with the optimal trading strategy $\pi^*\in{\Gamma}_{\cal C}^{P}$ if and only if the following conditions hold:
\begin{itemize}
\item[{\rm(i)}] there exists $\pi^*\in\Gamma_{\cal C}^{P}$ such that $\sup_{\pi\in{\cal C}\cap{\cal C}_0}\Phi_p^M(\pi)=\Phi_p^M(\pi^*)$, where $\Phi_p^M(\pi)$ is defined by \eqref{eq:PhiM};
\item[{\rm(ii)}] ${\cal E}(\beta^M\cdot R^{M,c}+\LW^M(u)*(\mu-\nu^{M})){\cal E}(M)$ is a $P$-martingale, where $R^{M,c}=R^c-\int_0^{\cdot}c_sH_sds$, i.e., it is the continuous local martingale part of $R$ under $Q^M$, and
\begin{align*}
{\beta}_t^M:=p\pi_t^*,~~\LW_t^M(u):=\{1+(\pi_t^*)^{\top}u\}^p-1,~~\nu^{M}:=(1+M_{\mu}^P(\Delta{M}|\tilde{\cal P})(u))*\nu.
\end{align*}
\end{itemize}
\end{lemma}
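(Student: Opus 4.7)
The plan is to chain together Lemma~\ref{lem:equifipp} and Theorem~\ref{prop:snforward}, with Girsanov's transformation of predictable characteristics providing the dictionary between the $P$-objects and the $Q^M$-objects. I first observe that the random field in \eqref{eq:UtxPiM} is exactly the multiplicative decomposition~\eqref{eq:UtxL2} with $V := V^M$ (and $D = \Pi^M$), so the hypothesis $Q^M\in\mathcal{Q}$ allows me to invoke Lemma~\ref{lem:equifipp}: $U$ is a $P$-FIPP if and only if $U^V_t(x) := U_0(x)\mathcal{E}_t(V^M)$ is a $Q^M$-FIPP. I then apply Theorem~\ref{prop:snforward} to $U^V$ and have only to match its three conditions (i)–(iii) with, respectively, (ii) and (i) of the present lemma and the \emph{a priori} equation~\eqref{eq:optimcondQn22}.

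The key bridge is Girsanov's theorem for semimartingales (Jacod--Shiryaev, Theorem III.3.24) applied to the density process $\mathcal{E}(M)$: under $Q^M$ the differential characteristics of $R$ transform as
\begin{align*}
c^{M,R}=c^{R},\qquad F^{M,R}(du)=\bigl(1+M^P_{\mu^R}(\Delta M\,|\,\tilde{\mathcal{P}})(u)\bigr)F^{R}(du),
\end{align*}
and $b^{M,R}=b^{R}+c^{R}H+h^R(u)\,M^P_{\mu^R}(\Delta M\,|\,\tilde{\mathcal{P}})(u)*F^{R}$, where $H$ encodes the Kunita–Watanabe direction of $M^c$ relative to $R^c$ (consistent with~\eqref{eq:H2}). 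Substituting these formulas into \eqref{eq:Phipi000} immediately recovers \eqref{eq:PhiM}, and the equivalence $P\sim Q^M$ yields $\mathcal{C}_0^{P}=\mathcal{C}_0^{Q^M}$ since the supports of $F^R$ and $F^{M,R}$ coincide. Hence condition (ii) of Theorem~\ref{prop:snforward} is exactly condition (i) of the lemma.

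For condition (i) of Theorem~\ref{prop:snforward}, $\beta^M=p\pi^*\mathds{1}_{\Delta A=0}$ is identical on both sides. Using $\nu^{M,R}=(1+M^P_{\mu^R}(\Delta M\,|\,\tilde{\mathcal{P}}))*\nu^R$ and $a^{M,R}=\int\nu^{M,R}(\{\cdot\}\times du)$, the denominator in the expression \eqref{eq:betaWhat} rewrites as
\begin{align*}
1+\bigl\{[1+(\pi^*)^\top u]^p-1\bigr\}\bigl(1+M^P_{\mu^R}(\Delta M\,|\,\tilde{\mathcal{P}})(u)\bigr)*\nu^R(\{\cdot\}\times du),
\end{align*}
which by \eqref{eq:BSDE-sol} is $1/l^M$; hence $\overline{W}^M(u)=l^M\{[1+(\pi^*)^\top u]^p-1\}$, matching condition (ii) of the lemma. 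Finally, by Lemma~\ref{lem:ReptildeMA} applied to $D$ with $e^D=\mathcal{E}(M)\mathcal{E}(V^M)$, the identities $dV^M/dA = b^D+c^D/2+(e^v-1-h^D(v))*F^D$ and $1+\Delta V^M = 1-a^D+\int e^v\nu^D(\{\cdot\}\times dv)$ reduce \eqref{eq:optimcond000} to $p^{-1}\,dV^M/dA+(1+\Delta V^M)\sup_{\pi}\Phi^M_p(\pi)=0$; combined with condition (ii) of the theorem, this is precisely~\eqref{eq:optimcondQn22}, which is assumed by the construction of $V^M$. The main obstacle is therefore the careful bookkeeping of the Girsanov–Jacod transformations (and of the conditional jump density $M^P_{\mu^R}(\Delta M\,|\,\tilde{\mathcal{P}})$ entering both $b^{M,R}$, $F^{M,R}$ and $l^M$); once those algebraic translations are in place no further analytic estimates are required.
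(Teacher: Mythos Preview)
Your proposal is correct and follows exactly the route the paper indicates (``its proof invokes Theorem~\ref{prop:snforward}''): reduce via Lemma~\ref{lem:equifipp} to the $Q^M$-FIPP problem for $U^{V^M}$, apply Theorem~\ref{prop:snforward}, and use the Girsanov transformation of the differential characteristics of $R$ to translate \eqref{eq:Phipi000}, \eqref{eq:betaWhat} and \eqref{eq:optimcond000} into \eqref{eq:PhiM}, the expressions in condition (ii), and the defining equation \eqref{eq:optimcondQn22} respectively. The identifications $F^{M,R}=(1+M^P_{\mu^R}(\Delta M|\tilde{\cal P}))F^R$, ${\cal C}_0^{Q^M}={\cal C}_0^P$, and $1-a^{M,R}+[1+(\pi^*)^\top u]^p*\nu^{M,R}(\{\cdot\}\times du)=(l^M)^{-1}$ are precisely the bookkeeping the paper has in mind.
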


The $P$-dynamics of the random field $U$ given by \eqref{eq:UtxPiM} plays an important role in the analysis of the so-called time-monotone processes. An explicit expression for this dynamics follows from \eqref{eq:BSDE-sol} and \eqref{eq:UtxPiM}:
\begin{lemma}\label{lem:dynamicsU}
For any $x>0$, the $P$-dynamics of the random field $U$ defined by \eqref{eq:UtxPiM} is given by
\begin{align}\label{eq:dynamicsU}
dU_t(x) &= U_{t-}(x) \left\{dM_t-p\sup_{\pi\in{\cal C}_0\cap{\cal C}}\Phi_{p}^M(\pi)dt\right\},\quad U_{0-}(x)=U_{0}(x)=\frac{x^p}{p}e^{D_0}.
\end{align}
\end{lemma}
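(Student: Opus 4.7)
The plan is to apply the It\^o integration-by-parts formula to $U(x) = U_0(x){\cal E}(M){\cal E}(V^M)$ and then substitute the characterizations of $V^M$ provided by \eqref{eq:optimcondQn22}, collapsing the resulting expression into $U_{-}(x)l^M\{dM - p\sup_{\pi}\Phi_p^M(\pi)\,dA\}$.

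First I would exploit the fact that $V^M\in{\cal V}^P\cap{\cal P}$ is of finite variation, so the continuous part of $[M, V^M]$ vanishes and $[M, V^M] = \sum_{s\leq\cdot} \Delta M_s\Delta V^M_s$. Combined with $d{\cal E}(M) = {\cal E}(M)_{-}dM$ and $d{\cal E}(V^M) = {\cal E}(V^M)_{-}dV^M$, integration by parts yields
\begin{align*}
d\bigl({\cal E}(M){\cal E}(V^M)\bigr) = {\cal E}(M)_{-}{\cal E}(V^M)_{-}\bigl\{dM + dV^M + d[M,V^M]\bigr\}.
\end{align*}

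The second step is to rewrite $dV^M$ using the two equalities in \eqref{eq:optimcondQn22}: together they give $dV^M = -p(1+\Delta V^M)\sup_{\pi}\Phi_p^M(\pi)\,dA$ as a Radon--Nikodym relation. I would then establish $1+\Delta V^M = l^M$ globally: on $\{\Delta A\neq 0\}$ this is the direct $P$-analogue of Lemma~\ref{lem:jumpDeltatildeA00}, while on $\{\Delta A = 0\}$ both sides equal $1$, since $\nu^R(\{t\}\times du)$ vanishes there (so $l^M = 1$ by its definition in \eqref{eq:BSDE-sol}) and $\Delta V^M = 0$ by absolute continuity of $V^M$ w.r.t.\ $A$. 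Hence $dV^M = -pl^M\sup_{\pi}\Phi_p^M(\pi)\,dA$.

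The main subtlety, which I would address last, is to collapse $dM + d[M, V^M]$ into $l^M\,dM$. Writing $l^M = 1 + (l^M - 1)$, this is equivalent to the stochastic-integral identity $(l^M - 1)\cdot M = [M, V^M]$. The predictable process $l^M - 1$ is supported on the thin predictable set $\{\Delta A\neq 0\}$ (a countable union of graphs of predictable stopping times), so decomposing $M = M^c + M^d$ into its continuous and purely discontinuous local-martingale parts shows $(l^M - 1)\cdot M^c = 0$ (the integrand vanishes $d[M^c,M^c]$-a.s.\ because the thin set has zero continuous measure), and the remaining contribution reduces to the jump sum $\sum_s (l^M_s - 1)\Delta M_s$; this coincides with $[M, V^M] = \sum_s \Delta V^M_s\Delta M_s$ since $\Delta V^M = l^M - 1$ on its support. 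Plugging back into the first-step expression yields
\begin{align*}
dU(x) = U_{-}(x)\bigl\{l^M\,dM - pl^M\sup_{\pi}\Phi_p^M(\pi)\,dA\bigr\} = U_{-}(x)l^M\bigl\{dM - p\sup_{\pi}\Phi_p^M(\pi)\,dA\bigr\},
\end{align*}
as claimed.
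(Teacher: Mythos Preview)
Your proposal is correct and follows essentially the same route as the paper. The paper does not spell out a proof of this lemma, but the computation is implicit in the proof of Theorem~\ref{thm:existence}: there Yor's formula gives ${\cal E}(M){\cal E}(V^M)={\cal E}(V^M+(1+\Delta V^M)\cdot M)$ in one stroke (using $[M,V^M]=\Delta V^M\cdot M$, cited from \cite{LipShir}), and the identification $1+\Delta V^M=l^M$ together with $dV^M=-pl^M\sup_\pi\Phi_p^M(\pi)\,dA$ finishes the job. Your integration-by-parts step plus the thin-set argument for $(l^M-1)\cdot M=[M,V^M]$ simply unpacks Yor's formula and the bracket identity by hand; nothing is genuinely different.
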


Using Lemma~\ref{lem:dynamicsU}, we revisit  time-monotone processes, first introduced by \cite{MusiZari10a} in Brownian markets, and discuss how existing results can be generalized to semimartingale markets:
\begin{example}[Time-Monotone Process with Semimartingale Factor Market]\label{exam:time-monotoneproc}
Consider a semimartingale process $Y$ satisfying $Y^c\equiv0$ and $\mu^Y\equiv0$, i.e., $Y\in{\cal P}\cap{\cal V}^P$ is of the form $Y=Y_0+B^Y=Y_0+\int_0^{\cdot}b^Y(Y_{s})ds$, where $b^Y$ is Lipschitz on $\R^d$. Assume the $P$-predictable characteristics of $R$ is given by $(B,C,\nu(dt,du))=(\int_0^{\cdot}b(Y_{s})ds,\int_0^{\cdot}c(Y_{s})ds,F(Y_{t},du)dt)$, and the constraint ${\cal C}_t(\omega)=C(Y_{t-}(\omega))$ satisfies Assumption {\Ac}. The triple $(b,c,F)$ satisfies conditions (i) and (ii) of Theorem III.2.32 in \cite{JacodShiryaev}. Taking the trivial $P$-local martingale $M\equiv 0$, we obtain $\Pi^0=\Pi_0^0-\int_0^{\cdot}f(Y_{s})ds$. Hence, the dynamics \eqref{eq:dynamicsU} reduces to
\begin{align}\label{eq:fippsde2}
	dU_t(x) &=U_{t-}(x)d\Pi_t^0= -U_{t}(x)f(Y_{t})dt,
\end{align}
where, for $y\in\R^d$, the function
\begin{align*}
	f(y) &:=p\sup_{\pi\in{C}_0(y)\cap{C(y)}}\left\{\pi^{\top}b(y)+\frac{p-1}{2}\pi^{\top}c(y)\pi+\int\{p^{-1}(1+\pi^{\top}u)^p-p^{-1}
-\pi^{\top}h(u)\}F(y,du)\right\},
\end{align*}
with $C_0(y):=\{\pi\in\R^n;\ F(y,\{u\in\R_0^n;~\pi^{\top}u+1<0\})=0\}$. Since $0\in C(y)$, ${\rm sign}(p)f(y)\geq0$ for all $y\in\R^d$. Then, Eq.~\eqref{eq:fippsde2} may be further simplified to
	\begin{align}\label{eq:utxsimple}
	dU_t(x)=\frac{1}{2}\frac{2(1-p)}{p}f(Y_t)\frac{|\partial_xU_t(x)|^2}{\partial_{xx}^2U_t(x)}dt.
	\end{align}
	Note that $\frac{2(1-p)}{p}f(y)\geq0$ because $1-p>0$ for all $y\in\R^d$. Then \eqref{eq:utxsimple} takes a similar form as SPDE (28) with zero volatility in \cite{MusiZari10}, Section 6.1. It was shown in \cite{MusiZari10a} that its solution is characterized by the so-called time-monotone process given by
	\begin{align}\label{eq:solmono}
	U_t(x) = G\left(x,\int_0^t\frac{2(1-p)}{p}f(Y_{s})ds\right)=G\left(x,-\frac{2(1-p)}{p}\Pi_t^0\right),
	\end{align}
	where, for $(x,t)\in(0,\infty)\times[0,\infty)$, $G(x,t)$ solves the fully nonlinear equation $\partial_tG=\frac{|\partial_x G|^2}{2\partial_{xx}^2G}$. The solution of this equation has been studied by \cite{MusiZari10a}.
	In \cite{MusiZari10}, and \cite{NadTeh17}, the stock return process $R$ is a drifted Brownian motion in the factor form (i.e., jumps are not allowed). This is equivalent to assuming that the stock price process is a geometric Brownian motion in the factor form. 
Therefore, Eq.~\eqref{eq:solmono} suggests that the solution of SPDE (28) in \cite{MusiZari10} corresponding to the zero volatility case can also be characterized by a time-monotone process 
in a {\it semimartingale market} (i.e., when the stock return process $R$ is a semimartingale).
\end{example}

\subsection{Integral representation of $\Pi^M$}\label{sec:quadbsde}

This section shows that the processes defined in \eqref{eq:BSDE-sol} {admits an integral representation w.r.t. the semimartingale factor $Y$. This, in turn, serves to construct the factor representation of the processes defined in \eqref{eq:BSDE-sol}.} To establish this representation, we  restrict the class of martingales from ${\cal M}_{loc}^{P,+}$ to a subset $\bar{\cal M}_{loc}^{P,+}$ of ${\cal M}_{loc}^{P,+}$, defined as follows:
\begin{definition}[Subset $\bar{\cal M}_{loc}^{P,+}$ of ${\cal M}_{loc}^{P,+}$]\label{def:MPplus}
	For $\epsilon\in(0,1)$ and $p\geq1$, define the positive increasing process	as follows:
    \begin{align}\label{eq:Thetapeps}
	\Theta^p(\epsilon):=\sum_{0<s\leq\cdot}\frac{|\Delta M_s|^p}{(1+\Delta M_s)^p}\mathds{1}_{\Delta M_s\in(-1,-\epsilon)},
	\end{align}
	{related to the jumps of $M\in{\cal M}_{loc}^{P,+}$.} The space $\bar{\cal M}_{loc}^{P,+}$ is the set of all scalar local martingales $M\in{\cal M}_{loc}^{P,+}$ for which there exists a
	constant $\epsilon\in[0,1)$ such that {\rm(i)} if $M\in{\cal A}_{loc}^P$, then $\Theta^1(\epsilon)\in{\cal A}_{loc}^{P,+}$; {\rm(ii)} if $M\in{\cal H}_{loc}^{P,2}$, then $\Theta^2(\epsilon)\in{\cal A}_{loc}^{P,+}$.
\end{definition}

Obviously, any continuous scalar $P$-local martingale belongs to $\bar{\cal M}_{loc}^{P,+}$. The following theorem {establishes an integral representation of the process $\Pi^M$, which will be used in the construction of the $P$-FIPP:}
\begin{theorem}\label{thm:existence}
Let $M\in\bar{\cal M}_{loc}^{P,+}$ and $(H^M,U^M,G^M,N^M)$ be Jacod's decomposition of $M$ w.r.t. the semimartingale factor $Y$. Consider the process $(\Pi^M,Z^M,W^M,K^M)$ defined by \eqref{eq:BSDE-sol}. We then have that
\begin{itemize}
\item[{\rm(i)}] the predictable process $Z^M\in L_{loc}^{P,2}(Y^c)$, $W^M\in G_{loc}^P(\mu^Y)$, $K^M\in\tilde{\cal O}$ satisfy $M_{\mu^Y}^P(K^M|\tilde{\cal P})=0$ and $\sqrt{\sum_{0<s\leq\cdot}{K}^M(s,\Delta Y_s)^2\mathds{1}_{\Delta Y_s\neq0}}\in{\cal A}_{loc}^{P,+}$;
\item[{\rm(ii)}] the process $N^M\in{\cal M}_{loc}^P$ satisfies $[Y,N^M]^P=0$ and $\{\Delta N^M\neq0\}\subset\{\Delta Y=0\}$;
\item[{\rm(iii)}] the scalar process ${\Pi^M}$ is an exponentially special semimartingale admitting the decomposition $e^{\Pi^M}=M^{\Pi}+V^{\Pi}$,
where $M^{\Pi}\in{\cal M}^P_{loc}$ and $V^{\Pi}\in{\cal V}^P\cap{\cal P}$ is such that $e^{-\Pi_{-}^M}\Delta V^{\Pi}>-1$, $P$-a.s.;
\item[{\rm(iv)}] $(\Pi^M,Z^M,W^M,K^M,N^M)$ satisfies the following equation: for any $0\leq t\leq T<\infty$, $P$-a.s.
\begin{align}\label{eq:bsde-intgral}
		\Pi^M_t &= \Pi^M_T + \int_t^T f(M,Z_{s}^M,W_{s}^M,K_{s}^M)ds - \int_t^T (Z_s^M)^{\top}dY_s^c\\
		&\quad-\int_t^T\int W_s^M(v)(\mu^Y-\nu^Y)(ds,dv)-\int_t^T\int K_s^M(v)\mu^Y(ds,dv)-\int_t^T dN_s^M.\nonumber
\end{align}
\end{itemize}
The random function $f$ is given by
\begin{align}\label{eq:driverf0}
	f(M,Z,W,K)
		&:=\frac{1}{2}Z^{\top}c^{Y}Z+p\sup_{\pi\in{\cal C}_0\cap{\cal C}}\Phi_p^{M}(\pi)-\left\{W(v)+1-e^{W(v)}M_{\mu^Y}^P\left(e^{K(v)}\big|\tilde{\cal P}\right)\right\}*F^Y.
\end{align}
\end{theorem}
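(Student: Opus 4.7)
I would first dispatch parts (i), (ii), (iii) separately. Property (ii) is inherited verbatim from Lemma~\ref{lem:Jacoddecom}. For (i), the strict positivity and local boundedness of $l^M$ follow from compactness of ${\cal C}$ (Assumption~{\Ac}) and admissibility of $\pi^*$, which force $(1+(\pi^*)^\top u)^p$ to stay bounded away from $0$ and $\infty$ against $\nu^R(\{\cdot\}\times du)$; then $Z^M=l^MH^M\in L_{loc}^{P,2}(Y^c)$ is immediate from Lemma~\ref{lem:Jacoddecom}. The identity $M_{\mu^Y}^P(K^M|\tilde{\cal P})=0$ holds by construction, while the $G_{loc}^P(\mu^Y)$-membership of $W^M$ and the ${\cal A}_{loc}^{P,+}$-property of the $K^M$-jump sum both rely on $M\in\bar{\cal M}_{loc}^{P,+}$: via $|\ln(1+x)|\leq C_\epsilon(|x|\vee|x|/(1+x))$ on $x>-1+\epsilon$, the jump control afforded by $\Theta^p(\epsilon)$ in \eqref{eq:Thetapeps} transfers to the logarithmic quantities $\ln(1+U^M+G^M)$ that define $W^M$ and $K^M$. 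For (iii), $e^{\Pi^M}={\cal E}(M){\cal E}(V^M)$ is strictly positive by construction, and Lemma~\ref{lem:ReptildeMA} applied to $\Pi^M$ delivers the decomposition $e^{\Pi^M}=1+M^\Pi+V^\Pi$ with $e^{-\Pi^M_-}\Delta V^\Pi>-1$.

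\textbf{Deriving the BSDE.} For (iv), Lemma~\ref{lem:dynamicsU} provides
\[
d(e^{\Pi^M})=e^{\Pi^M_-}\,l^M\Bigl(dM-p\sup_{\pi\in{\cal C}_0^P\cap{\cal C}}\Phi_p^M(\pi)\,dA\Bigr),
\]
so $e^{\Pi^M}={\cal E}(L)$ with $L:=l^M\cdot M-p\,l^M\sup_\pi\Phi_p^M(\pi)\cdot A$. The logarithmic Dol\'eans--Dade identity then yields
\[
\Pi^M=L-\tfrac{1}{2}[L^c,L^c]+\sum_{0<s\leq\cdot}\bigl(\ln(1+\Delta L_s)-\Delta L_s\bigr).
\]
Inserting Jacod's decomposition of $M$ (with $J^M=U^M$ by Remark~\ref{rem:Jacoddecom} since $Y$ is QLC) and taking $N^M$ in its purely-discontinuous orthogonal form gives $[L^c,L^c]=(Z^M)^\top c^Y Z^M\cdot A$ (producing the term $\tfrac{1}{2}Z^\top c^Y Z$ of $f$) and $l^M\cdot M=Z^M\cdot Y^c+U^M*(\mu^Y-\nu^Y)+G^M*\mu^Y+l^M\cdot N^M$, where the key simplification $l^M=1$ on the support of $\mu^Y$ (since $Y$-jumps and $A$-atoms never coincide by QLC of $Y$) makes these integrals match the BSDE's jump-martingale terms.

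\textbf{Matching and main obstacle.} The It\^o jump correction at $\mu^Y$-atoms equals $\ln(1+U^M+G^M)-(U^M+G^M)$. Writing $\ln(1+U^M+G^M)=(W^M-\ln l^M+l^M-1)+K^M$ and exploiting the $\tilde{\cal P}$-projection identity
\[
M_{\mu^Y}^P(e^{K^M}|\tilde{\cal P})\cdot e^{W^M}\cdot e^{l^M-1}=l^M(1+U^M),
\]
which follows by taking $\tilde{\cal P}$-projections of $e^{K^M}$ and invoking $M_{\mu^Y}^P(G^M|\tilde{\cal P})=0$ together with the definitions in \eqref{eq:BSDE-sol}, the $\nu^Y$-compensator of this correction rearranges into the nonlinear driver term $-\{W+l-e^{l-1}e^W M_{\mu^Y}^P(e^K|\tilde{\cal P})\}*F^Y$; the residual drift contribution at atoms of $A$ collapses into $pl^M\sup_\pi\Phi_p^M(\pi)\,dA$ via Lemma~\ref{lem:jumpDeltatildeA00} and \eqref{eq:optimcondQn22}, while the pure-martingale pieces get identified as $W^M*(\mu^Y-\nu^Y)+K^M*\mu^Y+l^M\cdot N^M$. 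The main obstacle is precisely this exponential/$\tilde{\cal P}$-projection book-keeping that recasts a logarithmic It\^o correction as the exponential driver term of $f$, in tandem with verifying that every resulting compensator lies in the integrability class claimed in (i); this is exactly where the $\bar{\cal M}_{loc}^{P,+}$ restriction becomes indispensable, since without the lower-bound cutoff on $\Delta M$ the log-type compensators need not be locally $A$-integrable.
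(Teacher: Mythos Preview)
Your overall route is the same as the paper's: take the logarithm of the stochastic exponential ${\cal E}(M){\cal E}(V^M)$, apply It\^o's formula, expand $M$ via Jacod's decomposition, and identify the compensated pieces with $W^M,K^M$ through the $\tilde{\cal P}$-projection identity you state (which is exactly the paper's equation $U^M=\delta^{-1}e^{\delta-1}e^{W_\delta^M}M_{\mu^Y}^P(e^{K^M}|\tilde{\cal P})-1$ rearranged). Your use of Lemma~\ref{lem:dynamicsU} is a legitimate shortcut for the Yor-formula step the paper carries out explicitly, and your observation that $l^M=1$ on $\{\Delta Y\neq0\}$ (since $Y$ is QLC and $V^M\in{\cal P}$) is correct and streamlines the jump bookkeeping, though the paper keeps $\delta=1+\Delta V^M$ symbolic throughout and only identifies $\delta=l^M$ at the end.

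The one place where your argument departs from the paper and is weaker is the local boundedness of $l^M$. You appeal to compactness of ${\cal C}$ and admissibility of $\pi^*$ to bound $(1+(\pi^*)^\top u)^p$ against $\nu^R(\{\cdot\}\times du)$, but this gives at best a pointwise-in-$t$ bound, not a uniform bound on stochastic intervals; for $p<0$ in particular the integrand can blow up near $\{\pi^{*\top}u=-1\}$ and compactness of ${\cal C}$ alone does not prevent this. The paper's argument is cleaner and more general: it uses $l^M=1+\Delta V^M$ with $V^M$ c\`adl\`ag and predictable, then exhibits an explicit localizing sequence by announcing the predictable times $s_n=\inf\{t:|V_t^M|\geq n\}$. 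This avoids any integrability estimate on $\pi^*$ and works regardless of the sign of $p$. Your sketch would benefit from adopting this structural argument rather than the analytic one.
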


Before presenting the proof of Theorem~\ref{thm:existence}, we collect few observations in the following remark:
\vspace{-1cm}
\begin{remark}\label{rem:bsdesol}
Observe that $(R,Y)$ is QLC. We can then make the following claims:
	\begin{itemize}
		\item[{\rm(i)}] As a consequence of from~\eqref{eq:optimcondQn22}, we have that the predictable process of finite variation $V^M$ is continuous. Moreover, the dynamics of $V^M$ is given by
		\begin{align}\label{eq:optimcondQn2233}
		dV_t^M=-p\sup_{\pi\in{\cal C}_0\cap{\cal C}}\Phi_p^{M}(\pi_t)dt.
		\end{align}
		\item[{\rm(ii)}] Let ${M}\in\bar{\cal M}_{loc}^{P,+}$, and $(H^M,U^M,0,0)$ be the  Jacod's decomposition of $M$ w.r.t. $Y$. A direct implication from (iv) in Theorem~\ref{thm:existence} is that the process $(\Pi^M,Z^M,W^M)$ given in \eqref{eq:BSDE-sol} satisfies the followint integral equation: for $0\leq t\leq T<\infty$, $P$-a.s.
		\begin{align}\label{eq:BSDEspecial}
		\Pi^M_t &= \Pi^M_T + \int_t^T f(M,Z_{s}^M,W_{s}^M)ds - \int_t^T (Z_s^M)^{\top}dY_s^c-\int_t^T\int W_s^M(v)(\mu^Y-\nu^Y)(ds,dv).
		\end{align}
		In the expression above, the function $f$ is given by
		\begin{align}\label{eq:driverfspec}
		f(M,Z,W)=\frac{1}{2}Z^{\top}c^{Y}Z+p\sup_{\pi\in{\cal C}_0\cap{\cal C}}\Phi_p^{M}(\pi)-\left\{W(v)+1-e^{W(v)}\right\}*F^Y.
		\end{align}
	\end{itemize}
\end{remark}

\noindent{\bf Proof of Theorem \ref{thm:existence}.}\quad Claim (ii) follows from Lemma~\ref{lem:Jacoddecom}. We next verify Claim (iv). Since $V^M\in{\cal V}^P\cap{\cal P}${, and} $V^M$ is continuous by \eqref{eq:optimcondQn22}, using Yor's formula (see, e.g.~\cite{Kara-Kard}) we obtain that ${\cal E}(M){\cal E}(V^M)={\cal E}(V^M+ M)$. An application of It\^o's formula yields
	\begin{align}\label{eq:dpi0}
	d\Pi^M &= dV^M+dM-\frac{1}{2}d\left\lc {M}^c,{M}^c\right\rc^P+d\sum\{\ln(1+\Delta{M})-\Delta{M}\}.
	\end{align}
It follows from \eqref{eq:optimcondQn22} that $dV^M=-p \Phi_p^M(\pi^*)dt=-p\sup_{\pi\in{\cal C}_0\cap{\cal C}} \Phi_p^M(\pi)dt$.
	Using \eqref{eq:logLrep2}, Eq.~\eqref{eq:dpi0} can be rewritten as:
	\begin{align}\label{eq:dpi}
	d\Pi^M &= -\left(p\sup_{\pi\in{\cal C}_0\cap{\cal C}} \Phi_p^M(\pi)+\frac{1}{2}(H^M)^{\top}c^{Y}H^M\right)dt+ H^MdY^c\nonumber\\
	&\quad +  U^M(v)*d(\mu^Y-\nu^Y)+ V^G(v)*d\mu^Y+dN^M\nonumber\\
	&\quad+\{\ln(1+U^M(v)+G^M(v))-U^M(v)-G^M(v)\}*d\mu^Y.
	\end{align}
Then, the predictable compensator of $\{\ln(1+U^M(v)+G^M(v))-U^M(v)-G^M(v)\}*\mu^Y$ is given by $\{W^M(v)-U^M(v)\}*\nu^Y$, where we used the fact that $M_{\mu^Y}^P({G}^M|\tilde{\cal P})=0$ and $W^M(v)$ is defined in \eqref{eq:BSDE-sol}. It can be seen that $W^M\in\tilde{\cal P}$. Using the above arguments, we can rewrite~\eqref{eq:dpi} as
	\begin{align}\label{eq:dpi2}
	d\Pi^M &= -\left(p\sup_{\pi\in{\cal C}_0\cap{\cal C}} \Phi_p^M(\pi)+\frac{1}{2}(H^M)^{\top}c^{Y}H^M-\{W^M(v)-U^M(v)\}*F^Y\right)dt \nonumber\\
	&\quad+ H^MdY^c+W^M(v)*d(\mu^Y-\nu^Y)+K^M(v)*d\mu^Y+dN^M,
	\end{align}
	where we recall the expression of $K^M(v)$ given in~\eqref{eq:BSDE-sol}. Notice that $W^M(v)+K^M(v)=\ln(1+U^M(v)+G^M(v))$, and hence $U^M(v) = e^{W^M(v)+K^M(v)}-1-G^M(v)$.
	Observe that $U^M\in\tilde{\cal P}$ and $W^M\in\tilde{\cal P}$. Applying the operator $M_{\mu^Y}^P(\cdot|\tilde{\cal P})$ to both sides of the above equation, we obtain that
	\begin{align}\label{eq:UMv}
	U^M(v) = e^{W^M(v)}M_{\mu^Y}^P\left(e^{K^M(v)}\big|\tilde{\cal P}\right)-1.
	\end{align}
Hence, using \eqref{eq:driverf0} we deduce that
	\begin{align}\label{eq:1stdPIM}
	&p\sup_{\pi\in{\cal C}_0\cap{\cal C}} \Phi_p^M(\pi)+\frac{1}{2}(H^M)^{\top}c^{Y}H^M-\{W^M(v)-U^M(v)\}*F^Y\nonumber\\
	&\quad= p\sup_{\pi\in{\cal C}_0\cap{\cal C}} \Phi_p^M(\pi)+\frac{1}{2}(H^M)^{\top}c^{Y}H^M-\left\{W^M(v)-e^{W^M(v)}M_{\mu^Y}^P\left(e^{K^M(v)}\big|\tilde{\cal P}\right)+1\right\}*F^Y\nonumber\\
	&\quad=f(M,H^M,W^M,K^M).
	\end{align}
	Note that $Z^M=H^M$ in~\eqref{eq:BSDE-sol}. It then follows from \eqref{eq:dpi2} that
	\begin{align}\label{eq:BSDE2222}
	d\Pi^M &= -f(M,Z^M,W^M,K^M)dt+ Z^MdY^c+W^M(v)*d(\mu^Y-\nu^Y)+K^M(v)*d\mu^Y+dN^M.
	\end{align}
	This yields {Claim} (iv).
	
	We next prove {Claim} (i). We first verify that $W^M\in\tilde{\cal P}$ also belongs to $G_{loc}^P(\mu^Y)$ (see also~\eqref{eq:BSDE-sol}). {Note that $Y$ is QLC.} It then follows from II.1.31 and II.1.32 in \cite{JacodShiryaev} that $C(W^M)=|W^M|^2*\nu^Y$ and $\bar{C}(W^M)=|W^M|*\nu^Y$. Observe that $M\in\bar{\cal M}_{loc}^{P,+}$ and by Proposition~I.4.17 in \cite{JacodShiryaev}, up to a localization, it is enough to prove that $W^M\in G_{loc}^P(\mu^Y)$ both for the case of $M\in{\cal M}_{loc}^P\cap{\cal H}^{P,2}$ satisfying $\Theta^2(\epsilon)\in{\cal A}^{P,+}$ for some $\epsilon\in[0,1)$, and for the case $M\in{\cal M}_{loc}^P\cap{\cal A}^P$ satisfying $\Theta^1(\epsilon)\in{\cal A}^{P,+}$ for some $\epsilon\in[0,1)$. We first consider the case  $M\in{\cal M}_{loc}^P\cap{\cal H}^{P,2}$ satisfying $\Theta^2(\epsilon)\in{\cal A}^{P,+}$ for some $\epsilon\in[0,1)$. It then follows from \eqref{eq:1+U+V} and Jensen's inequality applied with the operator $M_{\mu^Y}^P(\cdot|\tilde{\cal P})$ (see Problem 3.2.11 in~\cite{LipShir}) that
	\begin{align}\label{eq:ECLambda}
	\Ex^P\left[C(W^M)_{\infty}\right]&= \Ex^P\left[(|W^M|^2*\nu)_{\infty}\right]=\Ex^P\left[(|W^M|^2*\mu)_{\infty}\right]=M_{\mu^Y}^P\left(|W^M|^2\right)\nonumber\\
	&\leq M_{\mu^Y}^P\left(|\ln(1+\Delta M)|^2\right)=\Ex^P\left[\sum_{s>0}|\ln(1+\Delta M_s)|^2\mathds{1}_{\Delta M_s\neq0}\right]\\
	&=\Ex^P\left[\sum_{s>0}|\ln(1+\Delta M_s)|^2\mathds{1}_{\Delta M_s\in[-\epsilon,\infty)}\right]+\Ex^P\left[\sum_{s>0}|\ln(1+\Delta M_s)|^2\mathds{1}_{\Delta M_s\in(-1,-\epsilon)}\right].\nonumber
	\end{align}
	Using the inequality $\frac{x}{1+x}\leq\ln(1+x)\leq x$ for all $x>-1$, it holds that $|\ln(1+x)|\leq\max\{|x|,\frac{|x|}{1+x}\}$ for all $x>-1$. Then, it holds that
	\begin{align*}
	&\Ex^P\left[\sum_{s>0}|\ln(1+\Delta M_s)|^2\mathds{1}_{\Delta M_s\in[-\epsilon,\infty)}\right]\leq \Ex^P\left[\sum_{s>0}\max\left\{|\Delta M_s|^2,\frac{|\Delta M_s|^2}{(1+\Delta M_s)^2}\right\}\mathds{1}_{\Delta M_s\in[-\epsilon,\infty)}\right]\nonumber\\
	&\qquad\leq\Ex^P\left[\sum_{s>0}|\Delta M_s|^2\right]+\Ex^P\left[\sum_{s>0}\frac{|\Delta M_s|^2}{(1+\Delta M_s)^2}\mathds{1}_{\Delta M_s\in[-\epsilon,\infty)}\right]\nonumber\\
	&\qquad\leq\left(1+\frac{1}{(1-\epsilon)^2}\right)\Ex^P\left[\sum_{s>0}|\Delta M_s|^2\right]\leq \left(1+\frac{1}{(1-\epsilon)^2}\right)\Ex^P[[M,M]_{\infty}^P]<+\infty.
	\end{align*}
	Moreover, it holds that
	\begin{align*}
	\Ex^P\left[\sum_{s>0}|\ln(1+\Delta M_s)|^2\mathds{1}_{\Delta M_s\in(-1,-\epsilon)}\right]&\leq\Ex^P\left[\sum_{s>0}|\Delta M_s|^2\right]+\Ex^P\left[\sum_{s>0}\frac{|\Delta M_s|^2}{(1+\Delta M_s)^2}\mathds{1}_{\Delta M_s\in(-1,-\epsilon)}\right]\nonumber\\
	&\leq\Ex^P[[M,M]_{\infty}^P]+\Ex^P[\Theta_{\infty}^2(\epsilon)]<+\infty.
	\end{align*}
	Using \eqref{eq:ECLambda}, the above estimates imply that $\Ex^P\left[C(W^M)_{\infty}\right]<+\infty$, i.e., $C(W^M)\in{\cal A}^{P,+}$ and hence $W^M\in G_{loc}^P(\mu^Y)$ using Theorem II.1.33-(a) in \cite{JacodShiryaev}. We next consider the case of $M\in{\cal M}_{loc}^P\cap{\cal A}^P$ satisfying $\Theta^1(\epsilon)\in{\cal A}^{P,+}$ for some $\epsilon\in[0,1)$. By~\eqref{eq:1+U+V}, we have that
	\begin{align}\label{eq:ECLambda2}
	\Ex^P\left[\bar{C}(W^M)_{\infty}\right]&= \Ex^P\left[(|W^M|*\nu)_{\infty}\right]=\Ex^P\left[(|W^M|*\mu)_{\infty}\right]=M_{\mu^Y}^P\left(|W^M|\right)\nonumber\\
&\leq M_{\mu^Y}^P\left(|\ln(1+\Delta M)|\right)\\
	&=\Ex^P\left[\sum_{s>0}|\ln(1+\Delta M_s)|\mathds{1}_{\Delta M_s\in[-\epsilon,\infty)}\right]+\Ex^P\left[\sum_{s>0}|\ln(1+\Delta M_s)|\mathds{1}_{\Delta M_s\in(-1,-\epsilon)}\right].\nonumber
	\end{align}
	First, we obtain that
	\begin{align*}
	\Ex^P\left[\sum_{s>0}|\ln(1+\Delta M_s)|\mathds{1}_{\Delta M_s\in[-\epsilon,\infty)}\right]&\leq
	\Ex^P\left[\sum_{s>0}|\Delta M_s|\right]+\Ex^P\left[\sum_{s>0}\frac{|\Delta M_s|}{1+\Delta M_s}\mathds{1}_{\Delta M_s\in[-\epsilon,\infty)}\right]\nonumber\\
	&\leq\frac{2-\epsilon}{1-\epsilon}\Ex^P\left[\sum_{s>0}|\Delta M_s|\right]<+\infty,
	\end{align*}
	and the following inequality also holds
	\begin{align*}
	&\Ex^P\left[\sum_{s>0}|\ln(1+\Delta M_s)|\mathds{1}_{\Delta M_s\in(-1,-\epsilon)}\right]\leq\Ex^P\left[\sum_{s>0}|\Delta M_s|\right]+\Ex^P\left[\Theta_{\infty}^1(\epsilon)\right]
	<+\infty.
	\end{align*}
	The above estimates imply that $\Ex^P\left[\bar{C}(W^M)_{\infty}\right]<+\infty$, i.e., $\bar{C}(W^M)\in{\cal A}^{P,+}$, and hence $W^M\in G_{loc}^P(\mu^Y)$ using Theorem II.1.33-(b) in \cite{JacodShiryaev}.
	
	Recall the expression of $K^M(v)$ given in \eqref{eq:BSDE-sol}. It can be easily seen that $M_{\mu^Y}^P(K^M(v)|\tilde{\cal P})=0$, and it satisfies {Claim} (i), using similar estimates to the ones derived above. Next, recall the definition $Z^M:=H^M$ given in \eqref{eq:BSDE-sol}. Because $H^M\in{\cal P}$, we obtain that $Z^M\in{\cal P}$. Moreover, because $H^M\in L_{loc}^{P,2}(Y^c)$, we deduce that $Z^M\in L_{loc}^{P,2}(Y^c)$. Using \eqref{eq:BSDE-sol}, we conclude that $e^{\Pi^M}={\cal E}(M){\cal E}(V^M)$. Since $V^M\in{\cal V}^P\cap{\cal P}$, we conclude that  $[M,V^M]^P=\Delta M\cdot V^M=\Delta V^M\cdot M=0$ using \eqref{eq:optimcondQn2233} and (3.6) in~\cite{LipShir}, page 119. The Yor's formula yields $e^{\Pi^M}={\cal E}(V^M+M)$. This leads to the equality $V^{\Pi}=e^{\Pi_{-}^M}\cdot V^M$, and hence $e^{-\Pi_{-}^M}\Delta V^{\Pi}=\Delta V^M=0>-1$, $P$-a.s., {by} \eqref{eq:optimcondQn2233}. This verifies Claim (iii). \hfill$\Box$

{The integral representation~\eqref{eq:BSDEspecial} w.r.t. the semimartingale $Y$} is also related to the solution of a forward HJB equation in an integrated semimartingale factor model. We explore such a connection  in Section~\ref{sec:FHJB}.

\section{Forward HJB Equation with Integrated Factor}\label{sec:FHJB}

We incorporate time-monotone performance processes in our semimartingale factor form by allowing the forward performance process to depend on the sample path of the factor process $Y$ via its integral functional. More specifically, we consider a random field of the form:
\begin{align}\label{VbarVtxY}
U_t(x) = G\left(t,x,Y_t,\int_0^t g(s,Y_{s})ds\right),\quad (t,x)\in\R_+^2.
\end{align}
The factor function $G(t,x,y,z):[0,\infty)\times(0,\infty)\times\R^d\times\R\to\R$ belongs to $C^{1,2,2,1}$ and $g:[0,\infty)\times\R^d\to\R$ is a Borel function. If the input function $G(t,x,y,z)=U_0(x)\Gamma(t,y,z)$ with $U_0(x)=\frac{1}{p}x^pe^{D_0}$ ($D_0$ is an input which determines the initial forward preference), then $\Gamma(t,y,z)$ is the solution to a forward (partial integro-differential) HJB equation (see \eqref{eq:forhjbgamma} below).

Unlike the classical HJB equation, the forward HJB equation \eqref{eq:forhjbgamma} specifies the initial rather than the terminal value, and needs to be solved forward in time. Therefore, it is an ill-posed (time-reversed) HJB equation, and the classical theory of HJB equations is not applicable. We bypass this difficulty by connecting the solution of the forward HJB equation~\eqref{eq:forhjbgamma} to {the triplet of processes $(\Pi^M,Z^M,W^M)$ analyzed in the previous section. We prove that solving the forward equation is equivalent to constructing} {the process $\Pi^M$ with representation \eqref{eq:BSDEspecial}} and satisfying the factor form \eqref{VbarVtxY}. We establish a closed-form factor representation of {$(\Pi^M,Z^M,W^M)$} when the factor $Y$ is a special semimartingale (see Lemma~\ref{lem:PiMY22} below).  

Throughout the section, we consider the predictable characteristics of $R$ and $Y$ admitting the factor representation:
\begin{align}\label{eq:MRY0}
(B(\omega),C(\omega),\nu(\omega,dt,du))&=\left(\int_0^{\cdot}b(s,Y_{s}(\omega))ds,\int_0^{\cdot}c(s,Y_{s}(\omega))ds,F(t,Y_{t}(\omega),du)dt\right)\\
(B^Y(\omega),C^Y(\omega),\nu^Y(\omega,dt,dv))&=\left(\int_0^{\cdot}b^Y(s,Y_{s}(\omega))ds,\int_0^{\cdot}c^{Y}(s,Y_{s}(\omega))ds,F^Y(t,Y_{t}(\omega),dv)dt\right).\nonumber
\end{align}
Here $b(t,y)$ ($b^Y(t,y)$), $c(t,y)$ ($c^Y(t,y)$), and $F(t,y,du)$ ($F^Y(t,y,dv)$) satisfy conditions (i) and (ii) of Theorem III.2.32 in \cite{JacodShiryaev}. We also set the portfolio constraint to be in a factor form as
${\cal C}_t(\omega):=C_t(Y_{t-}(\omega))\subseteq\R^n$,
where $C:[0,\infty)\times\R^m\to{\cal B}(\R^n)$.

\subsection{Forward HJB equations and connection to factor form of BSDEs}\label{sec:formsolv}

Recall the wealth process given in Section~\ref{sec:marketmodel}. It follows from Definition~\ref{def:FIPP} and \eqref{VbarVtxY} that, if $U_t(x)=U_0(x)\Gamma(t,Y_t,\int_0^t g(Y_{s})ds)$ is a $P$-FIPP, then the pair of functions $(\Gamma,g)$ satisfies the following equation: for $(t,y,z)\in[0,\infty)\times\R^d\times\R$,
\begin{align}\label{eq:forhjbgamma}
0&=\partial_t\Gamma(t,y,z)+\partial_z\Gamma(t,y,z)g(t,y)+\nabla_y\Gamma(t,y,z)^{\top}b^{Y}(t,y)+\frac{1}{2}{\rm tr}\left[\nabla_{yy}^2\Gamma(t,y,z)c^{Y}(t,y)\right]\nonumber\\
&\qquad+p\sup_{\pi\in{\cal C}_{0,t}(y)\cap{C}_t(y)}\bigg\{\Gamma(t,y,z)\pi^{\top}b(t,y)+\frac{p-1}{2}\Gamma(t,y,z)\pi^{\top}c(t,y)\pi\nonumber\\
&\qquad\qquad+\pi^{\top}c^{RY}(t,y)\nabla_y\Gamma(t,y,z)+\int\big\{p^{-1}(1+\pi^{\top}u)^p\Gamma(t,y+v,z)-p^{-1}\Gamma(t,y,z)\nonumber\\
&\qquad\qquad-\Gamma(t,y,z)\pi^{\top}h(u)-p^{-1}\nabla_y\Gamma(t,y,z)^{\top}h^Y(v)\big\}\overline{F}(t,y,du,dv)\bigg\},
\end{align}
with initial value $\Gamma(0,Y_0,0)=1$. Here $\int_0^{\cdot}c_s^{RY}ds=(\lc R^{c,i},Y^{c,j}\rc)_{i=1,\ldots,n}^{j=1,\ldots,d}$, and we have used the notations: $\partial_t:=\frac{\partial}{\partial t}$, $\partial_z:=\frac{\partial}{\partial z}$, $\nabla_y:=(\partial_{y_1},\ldots,\partial_{y_d})^{\top}$ and $\nabla_{yy}^2:=(\partial_{y_iy_j}^2)_{i,j=1,\ldots,d}$.

Next, we connect the solution of the forward equation \eqref{eq:forhjbgamma} to the factor representation of {$\Pi^M$ specified by~\eqref{VbarVtxY}}. To this purpose, let $M\in\bar{\cal M}_{loc}^{P,+}$ be such that the Jacod's representation of $M$ w.r.t. $R$ and $Y$ is respectively given by
\begin{align*}
(H(t,Y_{t-}(\omega)),\Xi(t,Y_{t-}(\omega),u),0,0),~\text{and}~(\Lambda(t,Y_{t-}(\omega)),e^{\theta(t,Y_{t-}(\omega),v)}-1,0,0).
\end{align*}
The deterministic functions $H(t,y)$, $\Xi(t,y,u)$, $\Lambda(t,y)$ and $\theta(t,y,v)$ are all Borel measurable. Then, $P$-a.s.
\begin{align}\label{eq:M12}
M=H\cdot R^c + \Xi(u)*(\mu-\nu)=\Lambda\cdot Y^c + \{e^{\theta(v)}-1\}*(\mu^Y-\nu^Y).
\end{align}
The decomposition~\eqref{eq:M12} implies the existence of a relation between Jacod's representations of $M$ w.r.t. $R$ and w.r.t. $Y$. We state this relation in the following lemma.
\vspace{-0.3cm}
\begin{lemma}\label{lem:HtildeHc}
We have ${H}^{\top}c^{RY}\Lambda=\Lambda^{\top}c^{YR}{H}=H^{\top}cH=\Lambda^{\top}c^{Y}\Lambda$, and $\Xi(\Delta R) = e^{\theta(\Delta Y)}-1$, \ASP
\end{lemma}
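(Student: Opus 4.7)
The plan is to exploit the uniqueness of the canonical decomposition $M = M^c + M^d$ of a local martingale into its continuous and purely discontinuous parts. In the first representation of \eqref{eq:M12}, the stochastic integral $H \cdot R^c$ lies in ${\cal M}_{loc}^{P,c}$ while $\Xi(u) * (\mu^R - \nu^R)$ is purely discontinuous, so $M^c = H \cdot R^c$. Applying the same argument to the second representation gives $M^c = \tilde H \cdot Y^c$. Hence $H \cdot R^c = \tilde H \cdot Y^c$, $P$-a.s.

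Next, I would evaluate the predictable quadratic variation of $M^c$ using each of the available expressions. From $M^c = H \cdot R^c$ we obtain $\lc M^c, M^c \rc^P = H^{\top} c^R H \cdot A$; from $M^c = \tilde H \cdot Y^c$ we obtain $\tilde H^{\top} c^Y \tilde H \cdot A$; and from the bilinearity of the predictable covariation applied to $\lc H \cdot R^c, \tilde H \cdot Y^c \rc^P$ we obtain $H^{\top} c^{RY} \tilde H \cdot A$ and, by symmetry, $\tilde H^{\top} c^{YR} H \cdot A$. All four quantities coincide with $\lc M^c, M^c \rc^P$, and identifying the integrands w.r.t.\ the common predictable process $A$ yields the first chain of equalities, $P \times dA$-a.s.

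For the jump identity, the purely discontinuous parts of the two representations must also agree: $\Xi(u) * (\mu^R - \nu^R) = \{e^{\tilde h(v)} - 1\} * (\mu^Y - \nu^Y)$, $P$-a.s. Because both $R$ and $Y$ are QLC in the factor setup of Section~\ref{sec:FHJB}, neither $\nu^R$ nor $\nu^Y$ charges any predictable time, so the jumps of these stochastic integrals reduce to $\Xi_t(\Delta R_t) \mathds{1}_{\Delta R_t \neq 0}$ and $\{e^{\tilde h_t(\Delta Y_t)} - 1\} \mathds{1}_{\Delta Y_t \neq 0}$, respectively. Equating them at the jump times of $M$---at which both $\Delta R_t \neq 0$ and $\Delta Y_t \neq 0$ since the decompositions of $M$ carry no residual $N^M$-term---delivers the stated identity $P \times dA$-a.s.

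The main technical nuisance is the measure-theoretic bookkeeping: one must amalgamate the various $P$-null sets on which each of the identifications may fail into a single $P \times dA$-null exceptional set, and check that the factor-form predictable characteristics \eqref{eq:MRY0}--\eqref{eq:MRY}, together with the QLC assumption, effectively suppress the $\nu(\{t\}, \cdot)$ contributions so that the jump identifications are clean. Once this is done, the equalities are immediate consequences of uniqueness of the continuous/purely-discontinuous decomposition.
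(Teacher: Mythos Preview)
Your proposal is correct and follows essentially the same approach as the paper: both arguments identify $M^c = H\cdot R^c = \tilde H\cdot Y^c$ via the uniqueness of the continuous/purely discontinuous decomposition, compute $\lc M^c,M^c\rc^P$ in several ways to obtain the chain of equalities, and use QLC to eliminate the $\nu(\{\cdot\}\times\cdot)$ terms when matching jumps. The paper reaches the cross term $H^{\top}c^{RY}\tilde H$ slightly differently---it first writes $M^c = \tfrac12(H\cdot R^c + \tilde H\cdot Y^c)$ and expands $\lc M^c,M^c\rc^P$ bilinearly---whereas you compute $\lc H\cdot R^c,\tilde H\cdot Y^c\rc^P$ directly; your route is marginally cleaner but the content is the same.
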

\vspace{-0.2cm}

The following main result connects the solution of the forward equation \eqref{eq:forhjbgamma} satisfied by $(\Gamma,g)$ to the factor form of {$\Pi^M$ specified by \eqref{VbarVtxY}}. The proof is reported in the Appendix.

\begin{theorem}\label{thm:solhjb}
Let $M\in\bar{\cal M}_{loc}^{P,+}$ be given as in \eqref{eq:M12}. If the component $\Pi^M$ {in the integral representation \eqref{eq:BSDEspecial}} with initial value $\Pi_0^{M}=0$ admits a factor representation (i.e., there exist Borel functions $\Pi\in C^{1,2,1}$ and $\tilde{g}$ such that $\Pi_t^M=\Pi(t,Y_t,\int_0^t\tilde{g}(s,Y_{s})ds)$
	for $t\geq0$), then the forward HJB equation \eqref{eq:forhjbgamma} admits a classical solution $(\Gamma(t,y,z),g(t,y))=(e^{\Pi(t,y,z)},\tilde{g}(t,y))$
	on $(t,y,z)\in[0,\infty)\times\R^d\times\R$.
\end{theorem}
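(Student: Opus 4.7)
The plan is to derive the forward HJB equation~\eqref{eq:forhjbgamma} by computing the factor-form dynamics of $\Pi^M$ via It\^o's formula, matching them to the semimartingale decomposition enforced by BSDE~\eqref{eq:BSDEspecial}, and then translating the resulting pointwise identity to $\Gamma=e^{\Pi}$.

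First, I would apply It\^o's formula to $\Pi_t^M=\Pi(t,Y_t,Z_t)$ with $Z_t:=\int_0^t\tilde g(s,Y_{s-})dA_s$, using the canonical decomposition of $Y$ in \eqref{eq:joint-chara0} and its factor characteristics \eqref{eq:MRY0}. Noting that $Z$ is continuous of finite variation and $Z_t=Z_{t-}$ on the jump times of $Y$, this produces, $P$-a.s.,
\begin{align*}
d\Pi^M_t &= \Bigl\{\partial_t\Pi\,\tfrac{dt}{dA}+\partial_z\Pi\,\tilde g+(D_y\Pi)^{\top}b^Y+\tfrac{1}{2}\mathrm{tr}[D_{yy}^2\Pi\,c^{Y}]\\
&\qquad +\bigl\{\Pi(t,Y_{t-}+v,Z_{t-})-\Pi(t,Y_{t-},Z_{t-})-(D_y\Pi)^{\top}h^Y(v)\bigr\}*F^Y\Bigr\}dA_t\\
&\quad +(D_y\Pi)^{\top}dY_t^c+\bigl\{\Pi(t,Y_{t-}+v,Z_{t-})-\Pi(t,Y_{t-},Z_{t-})\bigr\}*(\mu^Y-\nu^Y).
\end{align*}
Comparing with \eqref{eq:BSDEspecial} and invoking the uniqueness of the canonical decomposition of a special semimartingale, I would identify
$$Z^M_t=D_y\Pi(t,Y_{t-},Z_{t-}),\qquad W^M_t(v)=\Pi(t,Y_{t-}+v,Z_{t-})-\Pi(t,Y_{t-},Z_{t-}),$$
and obtain a pointwise identity between the drift above and $-f(M,Z^M,W^M)$ given by \eqref{eq:driverfspec}.

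Second, I would insert these identifications into the driver. The jump piece $-\{W(v)+1-e^{W(v)}\}*F^Y$ rewrites, via the above identification, as an integral of $\Pi(t,Y_{t-}+v,Z_{t-})-\Pi(t,Y_{t-},Z_{t-})$ against $F^Y$ plus a term involving $e^{W^M(v)}$. Because $M$ is in the factor form~\eqref{eq:M12}, Jacod's decomposition of $M$ w.r.t.\ $R$ gives that $M_{\mu^R}^P(\Delta M|\tilde{\cal P})(u)=\Xi(t,Y_{t-},u)$, and Lemma~\ref{lem:HtildeHc} forces $1+\Xi(u)=e^{\tilde h(v)}$ on common jumps, so the weight $(1+M_{\mu^R}^P(\Delta M|\tilde{\cal P})(u))F^R(du)$ that appears in $\Phi_p^M(\pi)$ combines with the jump contribution to give a single integral against the joint L\'evy kernel $F(t,y,du,dv)$ with factor $(1+\pi^\top u)^p e^{W^M(v)}$. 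After substituting $e^{W^M(v)}=\Pi$-increment exponential, the integrand inside the supremum is exactly $p^{-1}(1+\pi^\top u)^p\,e^{\Pi(t,y+v,z)-\Pi(t,y,z)}-p^{-1}-\pi^\top h^R(u)$, up to the compensator terms already in the drift.

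Finally, I would multiply the resulting pointwise equation by $\Gamma=e^{\Pi}$. Using $\partial_t\Gamma=\Gamma\partial_t\Pi$, $D_y\Gamma=\Gamma D_y\Pi$, $D_{yy}^2\Gamma=\Gamma(D_{yy}^2\Pi+D_y\Pi(D_y\Pi)^\top)$, and $\Gamma(t,y+v,z)=\Gamma(t,y,z)\,e^{W^M(v)}$, the quadratic term $\tfrac{1}{2}(Z^M)^\top c^Y Z^M=\tfrac{1}{2}(D_y\Pi)^\top c^Y D_y\Pi$ coming from $f$ cancels against the extra quadratic piece produced by turning $D_{yy}^2\Pi$ into $D_{yy}^2\Gamma/\Gamma$, yielding the clean second-order operator $\tfrac{1}{2}\mathrm{tr}[D_{yy}^2\Gamma\,c^Y]$. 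Collecting terms then reproduces \eqref{eq:forhjbgamma} with $g=\tilde g$ and initial datum $\Gamma(0,Y_0,0)=e^{\Pi_0^M}=e^{D_0}=1$ (recall $D_0=0$ normalization implicit in $\Pi_0^M=D_0$). The main obstacle is the bookkeeping in the previous step: the driver $f$ hides $M$-dependent weights $M_{\mu^R}^P(\Delta M|\tilde{\cal P})(u)$ behind the supremum, and one must exploit the factor Jacod decompositions of $M$ along $R$ and along $Y$ together with Lemma~\ref{lem:HtildeHc} to re-express those weights through the joint kernel $F(t,y,du,dv)$; one also needs that ${\cal C}_0^P$ and ${\cal C}_t$ are in factor form so that $\sup_{\pi\in{\cal C}_0^P\cap{\cal C}}\Phi_p^M(\pi)$ converts verbatim into the supremum over $\pi\in{\cal C}_{0,t}^P(y)\cap C_t(y)$ in \eqref{eq:forhjbgamma}.
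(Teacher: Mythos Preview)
Your proposal is correct and follows essentially the same route as the paper. The only organizational difference is direction: the paper first applies the Cole--Hopf transform $\Gamma=e^{L}$ to the HJB equation~\eqref{eq:forhjbgamma}, rewrites it as the forward equation~\eqref{eq:PDEPig} for $L$, and then invokes the auxiliary Lemma~\ref{lem:Pifactor-fcn}(i) (which packages the It\^o computation and BSDE matching you carry out inline) to conclude that $(\Pi,\tilde g)$ solves it; you instead start from the factor representation of $\Pi^M$, derive the pointwise equation for $\Pi$, and then exponentiate. Both arguments hinge on the same ingredients you correctly single out: the It\^o/BSDE drift matching, Lemma~\ref{lem:HtildeHc} to convert the $M$-dependent weights in $\Phi_p^M$ into integrals against the joint kernel $F(t,y,du,dv)$ (in particular $c^R H=c^{RY}\tilde H$ and $1+\Xi(u)=e^{\tilde h(v)}$ on common jumps), and the factor form of the constraint sets.
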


\subsection{Factor representation of {$(\Pi^M,Z^M,W^M)$}}\label{sec:factrep}

In this section, we establish the factor representation \eqref{VbarVtxY} for the triplet of {processes $(\Pi^M,Z^M,W^M)$ admitting the integral representation \eqref{eq:BSDEspecial}} when the function $g$ is nonzero. We provide a closed-form expression for $g$ when $Y$ is a special semimartingale factor.

The following lemma provides an {explicit form for the integral representation \eqref{eq:BSDEspecial} w.r.t. the semimartingale factor}:
\begin{lemma}\label{lem:PiMY}
Let $M\in\bar{\cal M}_{loc}^{P,+}$ admit the Jacod's decomposition in \eqref{eq:M12}. Then, {the triplet of processs $(\Pi^M,Z^M,W^M)$ defined in~\eqref{eq:BSDE-sol} admits the representation}
\begin{align}\label{eq:solutionspeci0}
\Pi_t^M &=\int_0^t\Psi(s,Y_{s})ds + \int_0^t\Lambda(s,Y_{s-})d Y_s^c+\int_0^t\int\theta(Y_{s-},v)(\mu^Y-\nu^Y)(ds,dv),\nonumber\\
Z_t^M &=\Lambda(t,Y_{t-}),\quad W_t^M(v)=\theta(t,Y_{t-},v).
\end{align}
For $(t,y)\in[0,\infty)\times\R^d$, we have defined
\begin{align}\label{eq:varphity}
\Psi(t,y)&:=-p\varphi(t,y)-\frac{1}{2}\Lambda(t,y)^{\top}c^{Y}(t,y)\Lambda(t,y)-\int\{e^{\theta(t,y,v)}-1-\theta(t,y,v)\}F^Y(t,y,dv),
\end{align}
where the function $\varphi(t,y)$ is given by
\begin{align}\label{eq:PhiMspecial}
\varphi(t,y)&:=\sup_{\pi\in{C}_t(y)\cap{\cal C}_{0,t}(y)}\Bigg\{\pi^{\top}\left(b(t,y) +c(t,y)H(t,y)\right)+\frac{p-1}{2}\pi^{\top} c(t,y)\pi\nonumber\\
&\quad+\int\{p^{-1}(1+\pi^{\top}u)^p-p^{-1}-\pi^{\top}h(u)\}F(t,y,du)\\
&\quad+\int\{p^{-1}(1+\pi^{\top}u)^p-p^{-1}\}\{e^{\theta(t,y,v)}-1\}\overline{F}(t,y,du,dv)\Bigg\}.\nonumber
\end{align}
\end{lemma}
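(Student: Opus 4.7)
The plan is to specialize the explicit formulas given by Theorem~\ref{thm:existence} to the factor setting under Jacod's decompositions~\eqref{eq:M12}. Since $R$ is QLC, its compensator $\nu^R$ has no atoms in time, so the integral $((1+(\pi^*)^{\top}u)^p-1)(1+M_{\mu^R}^P(\Delta M|\tilde{\cal P})(u))*\nu^R(\{\cdot\}\times du)$ appearing in the expression for $l^M$ in \eqref{eq:BSDE-sol} vanishes identically, and hence $l^M\equiv 1$. Consequently $Z^M=H^M=\tilde H(t,Y_{t-})$. For $W^M$ the correction $\ln l^M-l^M+1$ is zero; from the factor decomposition of $M$ w.r.t.\ $Y$ we have $G^M=0$ and $U^M(v)=e^{\tilde h(t,Y_{t-},v)}-1$, so $\ln(1+U^M+G^M)=\tilde h(t,Y_{t-},v)$, which is already $\tilde{\cal P}$-measurable and is therefore left invariant by $M_{\mu^Y}^P(\cdot|\tilde{\cal P})$. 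This yields $W^M(v)=\tilde h(t,Y_{t-},v)$.

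To obtain the forward representation of $\Pi^M$, I would use Remark~\ref{rem:bsdesol}-(i), which guarantees that $V^M$ is continuous when $R$ is QLC, so $\Pi^M=\ln{\cal E}(M)+V^M$ with $\Pi_0^M=V_0^M$. Applying the logarithmic Dol\'eans-Dade formula to $M=\tilde H\cdot Y^c+(e^{\tilde h(v)}-1)*(\mu^Y-\nu^Y)$, using $\Delta M=e^{\tilde h(\Delta Y)}-1$ on $\{\Delta Y\neq 0\}$, and rearranging the pure-jump contributions yields
\begin{align*}
\ln{\cal E}(M)=\tilde H\cdot Y^c+\tilde h(v)*(\mu^Y-\nu^Y)-\tfrac{1}{2}\tilde H^{\top}c^{Y}\tilde H\cdot A-\{e^{\tilde h(v)}-1-\tilde h(v)\}*\nu^Y.
\end{align*}
Combining this with $V^M=V_0^M-p\sup_{\pi\in{\cal C}_0^P\cap{\cal C}}\Phi_p^M(\pi)\cdot A$, which follows from the continuity of $V^M$ together with \eqref{eq:optimcondQn2233}, produces the asserted integral equation for $\Pi_t^M$ with drift coefficient of the form \eqref{eq:varphity}, modulo the identification $\sup_{\pi}\Phi_p^M(\pi)=\varphi(t,Y_{t-})$.

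The hard part will be this last identification, since $\Phi_p^M$ in \eqref{eq:PhiM} is expressed through Jacod's components $(H,\Xi)$ of $M$ w.r.t.\ $R$, while $\varphi$ in \eqref{eq:PhiMspecial} is written through $\tilde h$ and the joint compensator $F(du,dv)$. The bridge is Lemma~\ref{lem:HtildeHc}, which gives $H^{\top}c^{R}H=\tilde H^{\top}c^{Y}\tilde H$ and $\Xi(\Delta R)=e^{\tilde h(\Delta Y)}-1$, $P\times dA$-a.s. The latter identity, combined with the defining property $\Xi=M_{\mu^R}^P(\Delta M|\tilde{\cal P})$ and disintegration of the joint compensator along the $u$-marginal, lets me rewrite any integral of the form $\int\phi(u)\Xi(u)F^R(du)$ as a double integral $\iint\phi(u)(e^{\tilde h(v)}-1)F(du,dv)$. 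Expanding the factor $(1+\Xi(u))$ in the last term of \eqref{eq:PhiM} and applying this identity causes the two linear-in-$\Xi$ pieces, namely the one sitting inside $\pi^{\top}h^R(u)\Xi(u)*F^R$ and the one coming from $-\pi^{\top}h^R(u)\Xi(u)*F^R$ in the expansion, to cancel pairwise. What remains coincides term-by-term with the integrand defining $\varphi$ in \eqref{eq:PhiMspecial}, so taking the supremum over the factor-form admissible set $C_t(y)\cap{\cal C}_{0,t}^P(y)$ gives $\varphi(t,Y_{t-})$ and closes the argument.
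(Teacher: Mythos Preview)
Your proposal is correct and follows essentially the same route as the paper: specialize the formulas of Theorem~\ref{thm:existence} to the factor setting (where $l^M\equiv1$, $G^M=0$, $N^M=0$), read off $Z^M=\tilde H$ and $W^M=\tilde h$, compute $\Pi^M=\ln\mathcal E(M)+V^M$ explicitly, and then invoke Lemma~\ref{lem:HtildeHc} to identify $\sup_\pi\Phi_p^M(\pi)$ with $\varphi(t,Y_{t-})$. The only point where the paper is slightly more precise is the conversion $\int\phi(u)\Xi(u)F^R(du)=\iint\phi(u)(e^{\tilde h(v)}-1)F(du,dv)$: rather than appealing to ``disintegration,'' the paper first writes the corresponding equality at the level of the integer-valued random measures $\mu^R$ and $\mu$ via the pathwise jump identity $\Xi(\Delta R)=e^{\tilde h(\Delta Y)}-1$, and then passes to compensators by taking the dual predictable projection.
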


If the factor process $Y$ is a special semimartingale, the process $\Pi^M$ in~\eqref{eq:BSDE-sol} admits an integrated semimartingale factor representation:
\begin{lemma}\label{lem:PiMY22}
	Let $\sigma\in\R^d$ and consider the local martingale $M$ in \eqref{eq:M12} with $\Lambda\equiv\sigma$ and $\theta(v)=\sigma^{\top}v$. If $Y$ is special, then {the process $(\Pi^M,Z^M,W^M)$ given in~\eqref{eq:BSDE-sol}} admits the representation:
	\begin{align}\label{eq:solutionspeci022}
	\Pi_t^M &=\int_0^t\Psi^{\sigma}(s,Y_{s})ds + \sigma^{\top}(Y_t-Y_0),\quad
	Z^M =\sigma,\quad W^M(v)=\sigma^{\top}v.
	\end{align}
	For $(t,y)\in[0,\infty)\times\R^d$, we define
	\begin{align}\label{eq:varphity22}
	\Psi^{\sigma}(t,y)&:=-p\varphi(t,y)-\frac{1}{2}\sigma^{\top}c^{Y}(t,y)\sigma-\sigma^{\top}b^Y(t,y)-\int\{e^{\sigma^{\top}v}-1-\sigma^{\top}v\}F^Y(t,y,dv).
	\end{align}
	The function $\varphi(t,y)$ is given by \eqref{eq:PhiMspecial}. In addition, the function $\theta(v)=\sigma^{\top}v$, and $H$ satisfies the relation given in Lemma~\ref{lem:HtildeHc} with $\Lambda=\sigma$.
\end{lemma}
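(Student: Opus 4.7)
The plan is to derive Lemma~\ref{lem:PiMY22} as a direct specialization of Lemma~\ref{lem:PiMY}, using the specialty of $Y$ to absorb the stochastic integrals against $Y^c$ and $\mu^Y-\nu^Y$ into an ``integrated'' term driven by $Y_t-Y_0$ itself. First, I would simply substitute the specific Jacod components $\tilde{H}(t,y)\equiv\sigma$ and $\tilde{h}(t,y,v)=\sigma^{\top}v$ into the solution formulas \eqref{eq:solutionspeci0}--\eqref{eq:varphity}, which immediately gives $Z^M=\sigma$, $W^M(v)=\sigma^{\top}v$, and
\[
\Psi(t,y)=-p\varphi(t,y)-\tfrac{1}{2}\sigma^{\top}c^{Y}(t,y)\sigma-\int\{e^{\sigma^{\top}v}-1-\sigma^{\top}v\}F^{Y}(t,y,dv).
\]

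The main step is then to rewrite the stochastic integral pieces $\int_0^t \sigma^{\top}dY_s^c+\int_0^t\!\int\sigma^{\top}v(\mu^Y-\nu^Y)(ds,dv)$ in closed form. Because $Y$ is QLC and special, its canonical decomposition can be expressed using the identity truncation $h^Y(v)=v$, i.e.\
\[
Y_t-Y_0=B_t^Y+Y_t^c+\int_0^t\!\int v(\mu^Y-\nu^Y)(ds,dv),\qquad B^Y=b^Y(\cdot,Y_{-})\cdot A.
\]
(The specialty hypothesis is exactly what guarantees $|v|\mathds{1}_{|v|>1}*\nu^Y\in\mathcal{A}_{loc}^{P,+}$ and therefore that $\sigma^{\top}v\in G_{loc}^{P}(\mu^Y)$; it also ensures that $M$ constructed via \eqref{eq:M12} with $\tilde h(v)=\sigma^{\top}v$ actually belongs to $\bar{\mathcal M}_{loc}^{P,+}$.) Taking $\sigma^{\top}$ of both sides and rearranging yields
\[
\int_0^t\sigma^{\top}dY_s^c+\int_0^t\!\int\sigma^{\top}v(\mu^Y-\nu^Y)(ds,dv)=\sigma^{\top}(Y_t-Y_0)-\int_0^t\sigma^{\top}b^Y(s,Y_{s-})\,dA_s.
\]

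Substituting this identity into \eqref{eq:solutionspeci0} and collecting all $dA_s$--terms gives
\[
\Pi_t^M=\Pi_0^M+\int_0^t\bigl[\Psi(s,Y_{s-})-\sigma^{\top}b^Y(s,Y_{s-})\bigr]dA_s+\sigma^{\top}(Y_t-Y_0),
\]
and one checks directly from the definitions that $\Psi(t,y)-\sigma^{\top}b^Y(t,y)=\Psi^{\sigma}(t,y)$, proving \eqref{eq:solutionspeci022}. The relation between $H$ and $\tilde H=\sigma$ is inherited from Lemma~\ref{lem:HtildeHc} applied to the present decomposition. The only genuine subtlety in the argument—hence the ``main obstacle''—lies in the truncation-function bookkeeping: one must verify that specialty of $Y$ permits replacing the generic truncation $h^Y$ implicit in the characteristics by the identity, so that the finite-variation compensator of the large jumps can be cleanly absorbed into the $-\sigma^{\top}b^Y$ term without an extra $\int(v-h^Y(v))F^Y$ remainder appearing in $\Psi^{\sigma}$.
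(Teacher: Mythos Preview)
Your proposal is correct and follows essentially the same route as the paper: specialize Lemma~\ref{lem:PiMY} to $\tilde H\equiv\sigma$, $\tilde h(v)=\sigma^{\top}v$, use the canonical decomposition of the special semimartingale $Y$ with the identity truncation to rewrite $\sigma^{\top}Y^c+\sigma^{\top}v*(\mu^Y-\nu^Y)=\sigma^{\top}(Y-Y_0)-\sigma^{\top}b^Y\cdot A$, and absorb the extra $-\sigma^{\top}b^Y$ into $\Psi^{\sigma}$. Your write-up is in fact more explicit than the paper's about the truncation bookkeeping; the only remark I would drop is the parenthetical claim that specialty of $Y$ ``ensures that $M\ldots$ belongs to $\bar{\mathcal M}_{loc}^{P,+}$,'' since that membership is part of the standing hypothesis inherited from Lemma~\ref{lem:PiMY} rather than a consequence of specialty.
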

\begin{remark}\label{rem:solvspeicalfactor}
Lemma~\ref{lem:PiMY} suggests the existence of a solution of the form  $(\Pi(t,y,z),g(t,y))=(\tilde{\Pi}(t,y)+z,\Psi(t,y))$, where the function $\Psi$ is given by \eqref{eq:varphity}. Using this solution structure, Lemma~A.3 in the Appendix may be restated as follows: under the conditions of Lemma~\ref{lem:PiMY}, the component $\Pi^M$ of~\eqref{eq:BSDEspecial}, {with the additional constraint $\Pi_0^M=0$}, admits a factor representation $(\Pi,\Psi)$ if and only if $\Pi(t,y,z)=\tilde{\Pi}(t,y)+z$ and $\tilde{\Pi}$ is a classical solution of the forward equation: for $(t,y)\in[0,\infty)\times\R^d$,
	\begin{align}\label{eq:tildePiPDEPig}
	-\Psi(t,y)&=\partial_t\tilde{\Pi}(t,y)+{\cal A}^Y\tilde{\Pi}(t,y)+f(t,y,\nabla_y\tilde{\Pi}(t,y),\tilde{\Pi}(t,y+v)-\tilde{\Pi}(t,y))
	\end{align}
	with the initial condition $\tilde{\Pi}(0,Y_0)=0$. Moreover, Lemma \ref{lem:PiMY22} indicates the existence of an explicit solution of the forward equation \eqref{eq:PDEPig} when the local martingale $M$ is given by~\eqref{eq:M12}, and the factor process $Y$ is a special semimartingale. The explicit solution of~\eqref{eq:PDEPig} is given by
	\begin{align}\label{eq:sol1}
	(\Pi(t,y,z),g(t,y))=(\sigma^{\top}(y-Y_0)+z,\Psi^{\sigma}(t,y)),
	\end{align}
	where $\Psi^{\sigma}$ is given by \eqref{eq:varphity22}. It may be easily verified that $\tilde{\Pi}(t,y)=\sigma^{\top}(y-Y_0)$ is a classical solution of \eqref{eq:tildePiPDEPig} if $\Psi$ is replaced by $\Psi^{\sigma}$.
\end{remark}

Building on Remark~\ref{rem:solvspeicalfactor}, we revisit a Black-Scholes market with It\^o diffusion factors of the form \eqref{VbarVtxY} and nonzero $g$. 
\begin{example}\label{exam:familypoweFIPP}
In a one-dimensional continuous diffusion model, \cite{NadZar14} characterize the solution to the forward Cauchy problem when the coefficients in the It\^o representation of $Y$ are sufficiently smooth. \cite{LiangZari17} consider a multi-dimensional continuous diffusion model and account for portfolio constraints. They assume that the return-factor process pair $(R,Y)$ follows the dynamics $dR_t=b^R(Y_t)dt+\sigma^R(Y_t)dB_t$ and $dY_t=b^Y(Y_t)dt+\sigma^YdB_t$, where $B$ is a $d$-dimensional Brownian motion, and the factor process $Y$ is also $d$-dimensional. They additionally require the covariance matrix $\sigma^Y(\sigma^Y)^{\top}$ to be positive definite, and hence invertible. They prove that the solution component $\Pi$ of their infinite horizon BSDE admits a factor representation of the form $\Pi_t=G(Y_t)$, for some function $G:\R^d\to\R$ that exhibits at most linear growth. We next analyze the model by \cite{LiangZari17} under the factor form \eqref{VbarVtxY} but with a nonzero $g$. We show that this yields a family of $P$-FIPPs. Let $\tilde{\sigma}$ be an arbitrary $\R^d$-valued column vector. Take the scalar local martingale $M=\tilde{\sigma}^{\top}B$. Then $M$ is a continuous $P$-martingale and ${\cal E}(M)$ is a $P$-martingale. Hence, $d\Qx^{\tilde{\sigma}}={\cal E}(M)dP$ defines a probability measure $\Qx^{\tilde{\sigma}}$, and  $B_t^{\tilde{\sigma}}:=B_t-\tilde{\sigma}t$, $t\geq0$, is a $d$-dimensional Brownian motion under $\Qx^{\tilde{\sigma}}$. In the It\^o diffusion model discussed above, \eqref{eq:BSDEspecial} reduces to
	\begin{align}\label{eq:BSDEspecialBM}
	d\Pi^{\tilde{\sigma}} = -{f}({Z}^{\tilde{\sigma}})dt + Z^{\tilde{\sigma}}\cdot dY^c,\qquad Y^c=\sigma^YB,
	\end{align}
	where the driver $f$ is given by
	$f(Z) = \frac{1}{2}|Z^{\top}\sigma^Y|^2 + p\sup_{\pi\in{\cal C}}\Phi_p(\pi;Y,Z)$,
	and
	\begin{align}\label{eq:PhiMpiBM}
	\Phi_p(\pi;y,Z) &= \pi^{\top}\{b^R(y) + \sigma^R(y)(\sigma^Y)^{\top}Z\}+\frac{p-1}{2}\pi^{\top}\sigma^R(y)\sigma^R(y)^{\top}\pi.
	\end{align}
	Since $Y^c=\sigma^YB$ by \eqref{eq:BSDEspecialBM}, we obtain the martingale representation \eqref{eq:M12} with $\tilde{H}=\sigma^Y[(\sigma^Y)^{\top}\sigma^Y]^{-1}\tilde{\sigma}$. Moreover, the factor $Y$ is an It\^o diffusion process, and hence it is a QLC special semimartingale. It follows from Lemma~\ref{lem:PiMY22} that $\Pi_t^{\tilde{\sigma}}=\Pi^{\tilde{\sigma}}(t,Y_t,\int_0^tg^{\tilde{\sigma}}(s,Y_s)ds)$ for $t\geq0$, and $\Pi_0^{\tilde{\sigma}}=0$ admits a factor representation:
	\begin{align}\label{eq:factorBM}
	\Pi^{\tilde{\sigma}}(t,y,z)&:=\tilde{H}^{\top}(y-Y_0)+z,\\
	g^{\tilde{\sigma}}(t,y)&:=-p\sup_{\pi\in{\cal C}}\left\{\pi^{\top}(b^R(y) + \sigma^R(y)(\sigma^Y)^{\top}\tilde{H})+\frac{p-1}{2}\pi^{\top}\sigma^R(y)\sigma^R(y)^{\top}\pi\right\}
	-\frac{1}{2}|\tilde{H}^{\top}\sigma^Y|^2-\tilde{H}^{\top}b^Y(y).\nonumber
	\end{align}
	Assume $\sigma^R(y)$ has full rank. Then $\sigma^R(y)^{\top}{\cal C}$ is also closed. The market price of risk is then given by $\lambda(y):=\sigma^R(y)^{\top}[\sigma^R(y)\sigma^R(y)^{\top}]^{-1}b^R(y)$, i.e.,  $\sigma^R(y)\lambda(y)=b^R(y)$. The optimal strategy $\pi^{\tilde{\sigma},*}$ thus satisfies that
	\begin{align}\label{eq:optimal-strategy}
	\sigma^R(y)^{\top}\pi^{\tilde{\sigma},*}\in{\sf P}_{\sigma^R(y)^{\top}{\cal C}}\left\{(1-p)^{-1}(\lambda(y)+\tilde{\sigma})\right\}.
	\end{align}
	Here, for any nonempty closed subset $K$ of $\R^n$, ${\sf P}_K\{ x \}$ is defined as the projection that maps a vector $x\in\R^n$ to the points in $K$ with minimal distance from $x$. Define $G^{\tilde{\sigma}}(t,x,y,z):=U_0(x)e^{\Pi^{\tilde{\sigma}}(t,y,z)}$. By Lemma~\ref{lem:BSDE-utility}, if $Q^{\tilde{\sigma}}:={\cal E}(p\sigma^R(Y)^{\top}\pi^{\tilde{\sigma},*}\cdot B^{\tilde{\sigma}})$ is a $\Qx^{\tilde{\sigma}}$-martingale, then
	\begin{align}\label{eq:Utildesigma}
	U_t^{\tilde{\sigma}}(x):=U_0(x)e^{\Pi_t^{\tilde{\sigma}}}=G^{\tilde{\sigma}}\left(t,x,Y_{t},\int_0^tg^{\tilde{\sigma}}(s,Y_s)ds\right),\quad t\geq0
	\end{align}
	is a $P$-FIPP. The above representation of $U^{\tilde{\sigma}}$ in terms of $G^{\tilde{\sigma}}$ gives a solution of the forward HJB equation \eqref{eq:forhjbgamma}; see Theorem~\ref{thm:solhjb}. Moreover, it can also be seen that the FIPP $U^{\tilde{\sigma}}$ depends on the vector $\tilde{\sigma}$. Hence, we have established a family of power FIPPs given by $(U^{\tilde{\sigma}})_{\tilde{\sigma}\in\R^d}$, that results in a family of optimal strategies given by $(\pi^{\tilde{\sigma},*})_{\tilde{\sigma}\in\R^d}$ if $(Q^{\tilde{\sigma}})_{\tilde{\sigma}\in\R^d}$ is a family of $\Qx^{\tilde{\sigma}}$-martingales.
\end{example}

\vspace{0.3cm}
\begin{APPENDICES}
\section*{Appendix: Proofs of Auxiliary Results}\label{app:proof1}
\renewcommand\theequation{A.\arabic{equation}}
\setcounter{equation}{0}

This section collects the technical proofs of some important auxiliary results that are used to establish theorems, propositions, and lemmas in the main body of the paper.

\noindent{\bf Proof of Lemma~\ref{lem:ReptildeMA}.}\quad We provide the proof for a general increasing and predictable process {$A=(A_t)_{t\geq0}$ used in the representation of predictable characteristics for semimartingales}. Then, the desired result follows by taking $A_t=t$.
An application of It\^o formula yields $L=L_0{\cal E}(N)$, where the process
		$N
		=B^D +D^c+ \frac{C^{D}}{2}+ h^D(v)*(\mu^D-\nu^D) + (e^v-1-h^D(v))*\mu^D$.
		Note that $D$ is exponentially special by the assumption {\Ad}-(i) or (ii). Then, we have that
		\begin{align}\label{eq:AML}
		M^L&=L_0 {\cal E}(N)_{-}\cdot\left\{D^c+ (e^v-1)*(\mu^D-\nu^D)\right\},\nonumber\\
		V^L&=L_0{\cal E}(N)_{-}\cdot\left\{B^D+ \frac{C^{D}}{2}+(e^v-1-h^D(v))*\nu^D\right\}.
		\end{align}
		Note that $\Delta B^D=\int h^D(v)\nu^D(\{\cdot\}, dv)$, so it follows from \eqref{eq:AML} that
		\begin{align}\label{eq:deltaAL}
		\Delta V^L 
		&=L_0{\cal E}(N)_{-}\int\{e^v-1\}\nu^D(\{\cdot\}, dv)=L_0{\cal E}(N)_{-}\left(\int e^v\nu^D(\{\cdot\}, dv)-a^D\right),
		\end{align}
        where $a^D:=\int \nu^D(\{\cdot\},dv)$. {By} Theorem~2.5.1 in~\cite{LipShir}, page 127, we have
		\begin{align*}
		M &= (L_{-}+\Delta V^L)^{-1}\cdot M^L=\left\{\int e^v\nu^D(\{\cdot\},dv)+1-a^D\right\}^{-1}\cdot\left\{D^c+ (e^v-1)*(\mu^D-\nu^D)\right\},
		\end{align*}
		and hence $V= L_{-}^{-1}\cdot V^L = B^D+ \frac{C^{D}}{2}+\{e^v-1-h^D(v)\}*\nu^D$. Using the expression for $M$ derived above, we obtain that
		\begin{align*}
		\Delta M &=\left\{\int e^v\nu^D(\{\cdot\}, dv)+1-a^D\right\}^{-1}(e^v-1)*(\mu^D-\nu^D)(\{\cdot\}, dv)\nonumber\\
		&=\frac{(e^{\Delta D}-1){\I}_{\Delta D\neq0}+a^D-\int e^v\nu^D(\{\cdot\},dv)}{\int e^v\nu^D(\{\cdot\}, dv)+1-a^D}.
		\end{align*}
		Using \eqref{eq:deltaAL}, we get $\Delta V=\int e^v\nu^D(\{\cdot\}, dv)-a^D$. Note that, it follows from Proposition II.2.9 in \cite{JacodShiryaev} that $a_t^D=\Delta A_tF_t^D(\R_0^n)$. This yields that $a^D\equiv0$ when $A_t=t$ for $t\geq0$ and hence $\int e^v\nu^D(\{\cdot\}, dv)\equiv0$ when $A_t=t$ for $t\geq0$. From these, we conclude the lemma. \hfill$\Box$

\noindent{\bf Proof of Lemma~\ref{lem:equifipp}.}\quad To prove Lemma~\ref{lem:equifipp}, we first need an auxiliary result. For any $T\in(0,\infty)$, let ${\cal T}_{T}$ be the set of $\Fx$-stopping times $\tau\leq T$. 
Then, we have the following equivalent characterization:

\noindent{\bf Lemma~A.1}\quad \emph{An adapted process $\zeta=(\zeta_t)_{t\geq0}$ is of class of {\rm(DL)} under a probability measure $Q$ if and only if for any $T\in(0,\infty)$,
			\begin{itemize}
				\item[{\rm(i)}] $\sup_{\tau\in{\cal T}_{T}}\Ex^{Q}[|\zeta_{\tau}|]<+\infty$;
				\item[{\rm(ii)}] for any $\varepsilon>0$, there is $\delta>0$ s.t. whenever $Q(A)\leq\delta$ with $A\in\F_T$, we have
				\[
				\sup_{\{\tau\in{\cal T}_T;\ A\in\F_{\tau}\}}\Ex^{Q}[|\zeta_{\tau}|\I_{A}]\leq \varepsilon.
				\]
			\end{itemize}}

\noindent{\bf Proof.}\quad We first assume that $\zeta=(\zeta_t)_{t\geq0}$ is of class of (DL) under $Q$. Then (i) follows immediately.  On the other hand, for any $\varepsilon>0$, there exists $\delta>0$ s.t. whenever $Q(A)\leq\delta$ with $A\in\F_T$, we have that
			\begin{align*}
			\Ex^{Q}[|\zeta_{\tau}|\I_{A}]=\Ex^{Q}[|\zeta_{\tau}|\I_{A\cap\{|\zeta_{\tau}|>\lambda\}}]+\Ex^{Q}[|\zeta_{\tau}|\I_{A\cap\{|\zeta_{\tau}|\leq\lambda\}}],\quad\forall~\lambda>0.
			\end{align*}
			Since $(\zeta_{\tau})_{\tau\in{\cal T}_T}$ is U.I., there exists a sufficiently large $\lambda>0$ s.t. $\sup_{\tau\in{\cal T}_{T}}\Ex^{Q}[|\zeta_{\tau}|\I_{|\zeta_{\tau}|>\lambda}]\leq\varepsilon/2$. Take $\delta=\varepsilon/(2\lambda)$. It then holds that $\sup_{\tau\in{\cal T}_T}\Ex^{Q}[|\zeta_{\tau}|\I_{A}]=\varepsilon/2+\lambda Q(A)\leq \varepsilon$. This yields the inequality $\sup_{\{\tau\in{\cal T}_T;\ A\in\F_{\tau}\}}\Ex^{Q}[|\zeta_{\tau}|\I_{A}]\leq\sup_{\tau\in{\cal T}_T}\Ex^{Q}[|\zeta_{\tau}|\I_{A}]\leq\varepsilon$, i.e., (ii) holds.
			
We next assume that (i) and (ii) hold. Let $\alpha:=\sup_{\tau\in{\cal T}_{T}}\Ex^{Q}[|\zeta_{\tau}|]$, and hence $\alpha\in[0,\infty)$ by (i). Then there exists $\delta>0$ s.t. for all $\hat{\tau}\in{\cal T}_T$, $Q(A_{\hat{\tau}}^{\lambda})\leq\delta$ for $\lambda\geq\alpha/\delta$. Here, we have set $A_{\hat{\tau}}^{\lambda}:=\{|\zeta_{\hat{\tau}}|>\lambda\}$. In particular, $\hat{\tau}\in\{\tau\in{\cal T}_T;\ A_{\hat{\tau}}^{\lambda}\in\F_{\tau}\}$. Therefore, for sufficiently large $\lambda\geq\alpha/\delta$, it holds from (ii) that
\begin{align}\label{eq:esitNI}
			\Ex^{Q}\left[|\zeta_{\hat{\tau}}|\I_{A_{\hat{\tau}}^{\lambda}}\right]\leq \sup_{\{\tau\in{\cal T}_T;\ A_{\hat{\tau}}^{\lambda}\in\F_{\tau}\}}\Ex^{Q}\left[|\zeta_{\tau}|\I_{A_{\hat{\tau}}^{\lambda}}\right]\leq\varepsilon.
\end{align}
Since $\hat{\tau}\in{\cal T}_T$ and $\varepsilon>0$ are arbitrary,  it follows from~\eqref{eq:esitNI} that $(\zeta_{\tau})_{\tau\in{\cal T}_T}$ is U.I. under $Q$. This ends the proof. \hfill$\Box$

We are now in a position to prove Lemma~\ref{lem:equifipp}. For (i), by Definition~\ref{def:FIPP}, it is enough to verify that $\pi \in \Gamma_{\cal C}^{Q^M}$ if and only if $\pi \in \Gamma_{\cal C}^{P}$. By Definition~\ref{def:control-set}, it is sufficient to show that $U_{t}(x{\cal E}_{t}(\pi\cdot R))^{-}$, $t\geq0$, is of class (DL) under $P$ if and only if $U_{t}^V(x{\cal E}_{t}(\pi\cdot R))^{-}$, $t\geq0$, is of class (DL) under $Q^M$. For notational convenience, set $\zeta_t:=U_{t}^V(x{\cal E}_{t}(\pi\cdot R))^{-}$ and hence $U_{t}(x{\cal E}_{t}(\pi\cdot R))^{-}={\cal E}_t(M)\zeta_t$ because ${\cal E}_t(M)>0$.  We first assume that ${\cal E}_t(M)\zeta_t$, $t\geq0$, is of class (DL) under $P$. Then, it follows from Lemma~{A.1} that (i) $\sup_{\tau\in{\cal T}_{T}}\Ex^{P}[{\cal E}_{\tau}(M)\zeta_{\tau}]<+\infty$, and (ii) for any $\varepsilon>0$, there exists $\delta>0$ s.t. whenever $P(A)\leq\delta$ with $A\in\F_T$, it holds that $\sup_{\{\tau\in{\cal T}_T;\ A\in\F_{\tau}\}}\Ex^{P}[{\cal E}_{\tau}(M)\zeta_{\tau}\I_{A}]\leq \varepsilon$. By \eqref{eq:QM}, we obtain $\sup_{\tau\in{\cal T}_{T}}\Ex^{Q^M}[\zeta_{\tau}]<+\infty$. Since $Q^M\sim P$ and $P(A)\leq\delta$ for $A\in\F_T$, there exists $\hat{\delta}>0$ s.t. $Q^M(A)=\Ex^P[{\cal E}_T(M)\I_{A}]\leq\hat{\delta}$. By (ii) and \eqref{eq:QM}, we also have that $\sup_{\{\tau\in{\cal T}_T;\ A\in\F_{\tau}\}}\Ex^{Q^M}[\zeta_{\tau}\I_{A}]\leq \varepsilon$. This shows that $\zeta=(\zeta_t)_{t\geq0}$ is of class (DL) under $Q^M$ using again Lemma~{A.1}. Similarly, we can verify that if $\zeta_t$, $t\geq0$, is of class (DL) under $Q^M$, then ${\cal E}_t(M)\zeta_t$, $t\geq0$, is of class (DL) under $P$.

{We next prove (ii). Let us assume that the random field $U^V$ defined by \eqref{eq:tildeU} is a $Q^M$-FIPP with optimal trading strategy $\pi^*\in\Gamma_{\cal C}^{Q^M}$. It can be easily seen that $x\to U_t(x):=U_0(x){\cal E}_t(M){\cal E}_t(V)$ is concave and increasing because $U_0(x)=\frac{e^{D_0}}{p}x^p$ for $x>0$. It follows from Definition~\ref{def:FIPP} with $Q=Q^M$ that, for any $\pi\in\Gamma_{\cal C}^{Q^M}$, $U^V(X^{\pi,x})=U_0(X^{\pi,x}){\cal E}(V)$ is a $Q^M$-supermartingale. This yields $\Ex^{Q^M}[U_t^V(X_t^{\pi,x})|\F_s]\leq U_s^V(X_s^{\pi,x})$ for any $0\leq s<t<\infty$. In view of \eqref{eq:QM} and using Eq.~(3.9) from Chapter III of \cite{JacodShiryaev}, page 168, it follows that $\Ex^{Q^M}[U_t^V(X_t^{\pi,x})|\F_s]=\Ex^{P}[U_t(X_t^{\pi,x})|\F_s]/{\cal E}_s(M)$ because $M\in{\cal M}_{loc}^{P,+}$. Hence, $\Ex^{P}[U_t(X_t^{\pi,x})|\F_s]\leq {\cal E}_s(M)U_s^V(X_s^{\pi,x})=U_s(X_s^{\pi,x})$, $P$-a.s. by using \eqref{eq:UtxL2}. Together with (i), this implies that, for any $\pi\in\Gamma_{\cal C}^{P}$, $U(X^{\pi,x})$ is a $P$-supermartingale. Moreover, it follows from (i) that $\pi^*$ is also admissible under $P$, i.e., $\pi^*\in\Gamma_{\cal C}^{P}$, and $U^V(X^{\pi^*,x})$ is a $Q^M$-martingale. Therefore, $\Ex^{Q^M}[U_t^V(X_t^{\pi^*,x})|\F_s]=U_s^V(X_s^{\pi^*,x})$ for all $0\leq s<t<\infty$. Moreover, by \eqref{eq:UtxL2} and \eqref{eq:QM}, we have that $U(X^{\pi^*,x})$ is a $P$-martingale. {Hence}, we have proven that $U$ defined by \eqref{eq:UtxL2} is a $P$-FIPP with optimal trading strategy $\pi^*\in\Gamma_{\cal C}^P$.
	
In the sequel, assume that the random field $U$ defined by \eqref{eq:UtxL2} is a $P$-FIPP with  optimal trading strategy $\pi^*\in\Gamma_{\cal C}^P$. It follows from Definition~\ref{def:FIPP} with $Q=P$ that, for any $\pi\in\Gamma_{\cal C}^{Q^M}$, $U(X^{\pi,x})={\cal E}_t(M)U^V(X^{\pi,x})$ is a $P$-supermartingale. This yields the inequality $\Ex^P[{\cal E}_t(M)U_t^V(X_t^{\pi,x})|\F_s]\leq{\cal E}_s(M)U_s^V(X_s^{\pi,x})$ for all $0\leq s<t<\infty$. Using \eqref{eq:QM}, we obtain $\Ex^P[{\cal E}_t(M)U_t^V(X_t^{\pi,x})|\F_s]/{\cal E}_s(M)=\Ex^{Q^M}[U_t^V(X_t^{\pi,x})|\F_s]$. Together with (i), this implies that $U^V(X^{\pi,x})$ is a $Q^M$-supermartingale for all $\pi\in\Gamma_{\cal C}^{Q^M}$. Similarly, we can verify from (i) that $\pi^*\in\Gamma_{\cal C}^{Q^M}$, and $U^V(X^{\pi^*,x})$ is a $Q^M$-martingale. This shows that the random field $U^V$ defined by \eqref{eq:tildeU} is a $Q^M$-FIPP with  optimal trading strategy $\pi^*\in\Gamma_{\cal C}^{Q^M}$. This concludes the proof of Lemma~\ref{lem:equifipp}. \hfill$\Box$
}

\noindent{\bf Proof of Lemma~\ref{lem:sdehatA}.}\quad  We provide the proof for a general increasing and predictable process {$A=(A_t)_{t\geq0}$ used in the representation of predictable characteristics for semimartingales}. The desired result then follows by taking $A_t=t$. Observe that, for any $\pi\in{\Gamma}_{\cal C}^{Q^M}$, it holds that
		$U_t^V(X_t^{\pi,x})={U}_0(x){\cal E}_t(V){\cal E}_t(\pi\cdot R)^{p}={U}_0(x){\cal E}_t(V){\cal E}_t(N(\pi))$,
		where the stochastic logarithm $N(\pi)$ is given by
		\begin{align}\label{eq:tildeN1}
		N(\pi) &:= p\pi\cdot R + \frac{p(p-1)}{2}\pi^{\top}C\pi + \{(1+\pi^{\top}u)^p-1-p\pi^{\top}u\}*\mu.
		\end{align}
		Observe that $V\in{\cal V}^{Q^M}\cap{\cal P}$. Then, using (3.6) in \cite{LipShir}, page 119, we have
		\begin{align}\label{eq:quadAN}
		[V,N(\pi)]^{Q^M}=\Delta N(\pi)\cdot V=\Delta V\cdot N(\pi).
		\end{align}
		{From the} Yor's formula, it follows that
		\begin{align}\label{eq:hatUrep}
		U^V(X^{\pi,x})={U}_0(x){\cal E}(V+N(\pi)+[V,N(\pi)]^{Q^M})={U}_0(x){\cal E}(V+(1+\Delta V)\cdot N(\pi)).
		\end{align}
		{By} Definition~\ref{def:FIPP}, for any $\pi\in\Gamma_{\cal C}^{Q^M}$ and $x\in\R_+$, $U^V(X^{\pi,x})$ is a $Q^M$-supermartingale, and there exists ${\pi}^*\in\Gamma_{\cal C}^{Q^M}$ such that $U^V(X^{{\pi}^*,x})$ is a $Q^M$-martingale. By~\eqref{eq:hatUrep},
		for any $\pi\in\Gamma_{\cal C}^{Q^M}$, ${p}^{-1}\{V+(1+\Delta V)\cdot N(\pi)\}$ is a local $Q^M$-supermartingale
		and ${p}^{-1}\{V+(1+\Delta V)\cdot N({\pi}^*)\}$ is a local $Q^M$-martingale. As $Q^M\sim P$, by Lemma~\ref{lem:ReptildeMA}, $Q^M(d\omega)\otimes dA_t$-a.s.
		\begin{align}\label{eq:1addDeltaA}
		1+\Delta V &= 1-a^D+\int e^v\nu^D(\{\cdot\},dv).
		\end{align}
For $\pi\in\Gamma_{\cal C}^{Q^M}$ and $(t,u,\omega)\in[0,\infty)\times\R^n\times\Omega$, define
\begin{align}\label{eq:HWpi2}
H_t^{\pi}(\omega) &:=\left(1-a_t^D(\omega)+\int e^v\nu^D(\omega,\{t\},dv)\right)\pi_t(\omega);\nonumber\\
W_t^{\pi}(\omega,u) &:=\left(1-a_t^D(\omega)+\int e^v\nu^D(\omega,\{t\}, dv)\right)\{p^{-1}(1+\pi_t(\omega)^{\top}u)^p-p^{-1}\}.
\end{align}
Using \eqref{eq:tildeN1} together with~\eqref{eq:Phipi000}, we deduce that, for any $\pi\in\Gamma_{\cal C}^{Q^M}$,
		\begin{align}\label{eq:cond1122}
		{p}^{-1}\{V+(1+\Delta V)\cdot{N}({\pi})\}&=p^{-1}V_0+p^{-1}\frac{dV}{dA}\cdot A+[(1+\Delta V)\Phi_p^M(\pi)]\cdot A\nonumber\\
		&\quad+H^{\pi}\cdot R^{M,c} + W^{\pi}*(\mu-\nu^{M}).
		\end{align}
		It follows from \eqref{eq:1addDeltaA} that, for all $\pi\in\Gamma_{\cal C}^{Q^M}$, $H^{\pi}\in L_{loc}^{Q^M,2}(R^{M,c})$, $W^{\pi}\in G_{loc}^{Q^M}(\mu)$ and $|p^{-1}(1+\pi^{\top}u)^p-p^{-1}-\pi^{\top}h(u)|*\nu^{M}\in{\cal A}_{loc}^{Q^M,+}$. Moreover, it holds that, $Q^M(d\omega)\otimes dA_t$-a.s.
		\begin{align}\label{eq:optimcond}
		p^{-1}\frac{dV}{dA}+(1+\Delta V)\Phi_p^M({\pi}^*)=0,\qquad \sup_{\pi\in{\cal C}_0^{Q^M}\cap{\cal C}}\Phi_p^M(\pi)=\Phi_p^M({\pi}^*).
		\end{align}
Note that, by \eqref{eq:HWpi2}, we have $H^{\pi}=\pi$ and $W^{\pi}=p^{-1}(1+\pi_t(\omega)^{\top}u)^p-p^{-1}$ when $A_t=t$.	Then, by taking $A_t=t$, Eq.~\eqref{eq:optimcond000} follows from~\eqref{eq:optimcond},~\eqref{eq:1addDeltaA} and~\eqref{eq:hatAhatM}. Thus, we complete the proof of the lemma. \hfill$\Box$

\noindent{\bf Proof of Lemma~\ref{lem:recessionfcn}.}\quad
		(i) For $\pi\in\{y\in\R^n;\ F^{M}(\Lambda_{-}(y))=0\}$, note that $\{u\in\R_0^n;\ \pi^{\top}u+\lambda<0\}\uparrow \Lambda_{-}(\pi)$ as $\lambda\downarrow0$. This implies that $F^{M}(\{u\in\R_0^n;\ \pi^{\top}u+\lambda<0\})=0$ for all $\lambda>0$. Thus $\lambda^{-1}\pi\in C_0$ for all $\lambda>0$. In view of \eqref{eq:convexfcng}, we have
		\begin{align*}
		\lambda \psi(\lambda^{-1}\pi)&=-\pi^{\top}b^{M} -\frac{p-1}{2}\lambda^{-1}\pi^{\top}c\pi-\{p^{-1}\lambda^{1-p}(\lambda+\pi^{\top}u)^p-\lambda p^{-1}-\pi^{\top}h(u)\}*F^{M},
		\end{align*}
		for all $\lambda>0$. Then, from Corollary 8.5.2 in~\cite{Rockafellar}, it follows that $\psi0^+(\pi)=\lim_{\lambda\downarrow0}\lambda \psi(\lambda^{-1}\pi)$ and this gives \eqref{eq:g0+case1}. For $\pi\in\{y\in\R^n;\ F^{M}(\Lambda_{-}(y))>0\}$, because $\{u\in\R_0^n;\ \pi^{\top}u+\lambda<0\}\uparrow \Lambda_{-}(\pi)$ as $\lambda\downarrow0$, there exists a constant $\lambda_0>0$ such that for all positive $\lambda\leq\lambda_0$, it holds that $F^{M}(\{u\in\R_0^n;\ \pi^{\top}u+\lambda<0\})>0$. This implies that $\lambda^{-1}\pi\notin C_0$ for all positive $\lambda\leq\lambda_0$. By the definition of $\psi(\pi)$ in~\eqref{eq:convexfcng}, we have that $\lambda \psi(\lambda^{-1}\pi)=+\infty$ for all positive $\lambda\leq\lambda_0$. Hence $\psi0^+(\pi)=\lim_{\lambda\downarrow0}\lambda \psi(\lambda^{-1}\pi)=+\infty$. The proof of (ii) is similar to that of (i) and hence we omit it. (iii) It follows in a straightforward manner from previous arguments that the recession cone of $\psi$ is given by $\{\pi\in\R^n;\ \psi0^+(\pi)\leq0\}$. (iv) The direction of the vectors in the recession cone of $\psi$ is the direction of recession of $\psi$. The constancy space of $\psi$ is given by $\{\pi\in\R^n;\ \psi0^+(\pi)\leq0,\ \psi0^+(-\pi)\leq0\}$. It thus holds that $\{\pi\in\R^n;\ \psi0^+(\pi)\leq0,\ \psi0^+(-\pi)\leq0\}={\cal N}^{Q^M}$. \hfill$\Box$
\vspace{0.2cm}

\noindent{\bf Lemma~A.2}~{\bf(\cite{Nutzaap}, Appendix A)}\quad 
		{\it Let Assumption {\Af} hold. Then, the following statements hold:
		\begin{itemize}
			\item[{\rm(i)}] If $p\in(0,1)$, $I^M(\pi)$ defined by \eqref{eq:ipim} is finite and continuous on ${\cal C}_0$;
			\item[{\rm(ii)}] If $p<0$, $I_1^M(\pi)$ defined in \eqref{eq:ipim} is finite and continuous on ${\cal C}_0$, and $I_2^M(\pi)$ defined in \eqref{eq:ipim} is finite on $\cup_{\lambda\in[0,1)}\lambda{\cal C}_0$;
			\item[{\rm(iii)}] Under {\Ac}, any optimal strategy $\pi^*\in\argmax_{\pi\in{\cal C}\cap{\cal C}_0}\Phi_p^M(\pi)$ and is unique, modulo ${\cal N}^{Q^M}$.
		\end{itemize}}

\vspace{0.2cm}
\noindent{\bf Proof of Lemma~\ref{lem:HtildeHc}.}\quad  We provide the proof for a general increasing and predictable process {$A=(A_t)_{t\geq0}$ used in the representation of predictable characteristics for semimartingales}. Then, the desired result follows by taking $A_t=t$. It follows from \eqref{eq:M12} that
$$
\Delta M=\Xi(\Delta R)-\Xi(u)*\nu(\{\cdot\}, du)=e^{\theta(\Delta Y)}-1-\{e^{\theta(v)}-1\}*\nu^Y(\{\cdot\}, dv).
$$
Hence, the second identity follows from the assumption that $(R,Y)$ is QLC (if $A_t=t$, then it is automatically satisfied). We next prove the first identity. {Applying} \eqref{eq:M12} again, we obtain
\begin{align*}
		M=\frac{1}{2}(H\cdot R^c+\Lambda\cdot Y^c)+\frac{1}{2}\left(\Xi(u)*(\mu-\nu)+\{e^{\theta(v)}-1\}*(\mu^Y-\nu^Y)\right).
\end{align*}
Consequently, we obtain that
\begin{align}\label{eq:M1234}
\lc M^c,M^c\rc^P=\frac{1}{4}\lc H\cdot R^c+\Lambda\cdot Y^c,H\cdot R^c+\Lambda\cdot Y^c\rc^P=H^{\top}cH\cdot A=\Lambda^{\top}c^{Y}\Lambda\cdot A.
\end{align}
Moreover, it holds that
\begin{align*}
		\frac{1}{4}\lc H\cdot R^c+\Lambda\cdot Y^c,H\cdot R^c+\Lambda\cdot Y^c\rc^P&=\frac{1}{4}\left(H^{\top}cH+\Lambda^{\top}c^{Y}\Lambda+{H}^{\top}c^{RY}\Lambda+\Lambda^{\top}c^{YR}{H}\right)\cdot A\nonumber\\
		&=\frac{1}{2}\left(H^{\top}cH+{H}^{\top}c^{RY}\Lambda\right)\cdot A,
\end{align*}
where we use the fact that ${H}^{\top}c^{RY}\Lambda=\Lambda^{\top}c^{YR}{H}$. Then, \eqref{eq:M1234} implies that $\frac{1}{2}(H^{\top}cH+{H}^{\top}c^{RY}\Lambda)\cdot A=H^{\top}cH\cdot A$, and hence ${H}^{\top}c^{RY}\Lambda\cdot A=H^{\top}cH\cdot A$. The proof then follows by taking $A_t=t$ for $t\geq0$. \hfill$\Box$

In order to prove Theorem~\ref{thm:solhjb}, we need the following auxiliary result.

\noindent{\bf Lemma~A.3}\quad \emph{The following statements hold:
\begin{itemize}
\item[{\bf(i)}] Let $(\Pi^M,Z^M,W^M)$ satisfy the integral representation \eqref{eq:BSDEspecial} with initial value $\Pi_0^M=0$. If there exists a pair of functions $(\Pi,g)$ with $\Pi\in C^{1,2,1}$ and $g$ being a Borel function, such that $\Pi_t^M=\Pi(t,Y_t,\int_0^tg(s,Y_{s})ds)$, then $(\Pi,g)$ is a classical solution of the following forward equation: $\Pi(0,Y_0,0)=0$, and for $(t,y,z)\in[0,\infty)\times\R^d\times\R$,
		\begin{align}\label{eq:PDEPig}
		0&=\partial_t\Pi(t,y,z)+{\cal A}^Y\Pi(t,y,z)+g(t,y)\partial_z\Pi(t,y,z)
		+f(t,y,\nabla_y\Pi(t,y,z),\Pi(t,y+v,z)-\Pi(t,y,z)).
		\end{align}
		The function $f$ in \eqref{eq:BSDEspecial} is given by
		\begin{align}\label{eq:driver2222}
		f(t,y,Z,W) = \frac{1}{2}Z^{\top}c^{Y}(t,y)Z + p\varphi(t,y;Z,W)-\{W(v)+1-e^{W(v)}\}*F^Y(t,y),
		\end{align}
		where
		\begin{align}\label{eq:varphitytildeHh}
		&\varphi(t,y;Z,W):=\sup_{\pi\in C_t(y)\cap{\cal C}_{0,t}(y)}\Bigg\{\pi^{\top}\left(b(t,y) +c^{RY}(t,y)Z\right)+\frac{p-1}{2}\pi^{\top} c(t,y)\pi\\
		&\quad+\int\{p^{-1}(1+\pi^{\top}u)^p-p^{-1}-\pi^{\top}h(u)\}F(t,y,du)+\int\{p^{-1}(1+\pi^{\top}u)^p-p^{-1}\}\{e^{W(v)}-1\}\overline{F}(t,y,du,dv)\Bigg\}.\nonumber
		\end{align}
		The integral-differential operator ${\cal A}^Y$ is defined as:
		\begin{align}\label{eq:A}
		{\cal A}^Y\Pi(t,y,z)&:=\nabla_y\Pi(t,y,z)^{\top}b^Y(t,y)+\frac{1}{2}{\rm tr}[\nabla_{yy}^2\Pi(t,y,z) c^{Y}(t,y)]\nonumber\\
		&\quad+\{\Pi(t,y+v,z)-\Pi(t,y,z)-\nabla_y\Pi(t,y,z)^{\top}h^Y(v)\}*F^Y(t,y).
		\end{align}
\item[{\bf(ii)}] Assume the pair of functions $(\Pi,g)$, where $\Pi\in C^{1,2,1}$ and $g$ is a Borel function, satisfy the forward equation \eqref{eq:PDEPig} with initial condition $\Pi(0,Y_0,0)=0$. Then, $(\Pi^M,Z^M,W^M)$ with integral representation \eqref{eq:BSDEspecial} admits a factor representation with initial value $\Pi_0^M=0$.
	\end{itemize}}

\noindent{\bf Proof.}\quad (i) Suppose first that $\Pi_t^M=\Pi(t,Y_t,\int_0^tg(s,Y_{s})ds)$ for $t\geq0$, i.e., $\Pi^M$ has a factor representation. Let ${\cal Y}_t:=\int_0^tg(s,Y_{s})ds$ for $t\geq0$. Then, an application of It\^o formula yields that
		\begin{align}\label{eq:dPiM}
		d\Pi^M &= \left(\partial_t\Pi(Y_{-},{\cal Y}_{-})+{\cal A}^Y\Pi(Y_{-},{\cal Y}_{-})+ g(Y_{-})\partial_z\Pi(Y_{-},{\cal Y}_{-})\right)dt + \nabla_y\Pi(Y_{-},{\cal Y}_{-})dY^c\nonumber\\
		&\quad +\{\Pi(Y_{-}+v,{\cal Y}_{-})-\Pi(Y_{-},{\cal Y}_{-})\}*d(\mu^Y-\nu^Y).
		\end{align}
		Recall that $(\Pi^M,Z^M,W^M)$ satisfies the integral representation~\eqref{eq:BSDEspecial}, i.e., $d\Pi^M = -f(Y_{-},Z^M,W^M)dt + Z^M\cdot dY^c + W^M * d(\mu^Y-\nu^Y)$. Then, it holds that $Z^M=\nabla_y\Pi(Y_{-},{\cal Y}_{-})$, $W^M(v)=\Pi(Y_{-}+v,{\cal Y}_{-})-\Pi(Y_{-},{\cal Y}_{-})$, and
		\begin{align}\label{eq:fMN}
		-f(Y_{-},Z^M,W^M)=\partial_t\Pi(Y_{-},{\cal Y}_{-})+{\cal A}^Y\Pi(Y_{-},{\cal Y}_{-})+ g(Y_{-})\partial_z\Pi(Y_{-},{\cal Y}_{-}).
		\end{align}
		Recall that $\sup_{\pi\in{\cal C}\cap{\cal C}_0}\Phi^M(\pi)=\varphi(Y_{-})$, where the function $\varphi(t,y)$ is given by \eqref{eq:PhiMspecial}. It follows from Lemma~\ref{lem:HtildeHc} that  the function $\varphi(t,y)$ can be rewritten as:
		\begin{align*}
		&\varphi(t,y;\Lambda,\theta)=\sup_{\pi\in{C}_t(y)\cap{\cal C}_{0,t}(y)}\Bigg\{\pi^{\top}\left(b(t,y) +c^{RY}(t,y)\Lambda(t,y)\right)+\frac{p-1}{2}\pi^{\top} c(t,y)\pi\\ &\quad+\int\{p^{-1}(1+\pi^{\top}u)^p-p^{-1}-\pi^{\top}h(u)\}F(t,y,du)+\int\{p^{-1}(1+\pi^{\top}u)^p-p^{-1}\}\{e^{\theta(t,y,v)}-1\}\overline{F}(t,y,du,dv)\Bigg\}.\nonumber
		\end{align*}
		By Lemma~\ref{lem:PiMY}, the component $(Z^M,W^M)$ in \eqref{eq:BSDEspecial} is given by $Z^M=\Lambda$ and $W^M(v)=\theta(v)$.
		Hence, $Z^M=\Lambda=\nabla_y\Pi$ and  $W^M(v)=\theta(Y_{-},v)=\Pi(Y_{-}+v,{\cal Y}_{-})-\Pi(Y_{-},{\cal Y}_{-})$. Recall that the driver $f$ of \eqref{eq:BSDEspecial} is given by \eqref{eq:driverfspec}. Then,
		$f(Y_{-},Z^M,W^M) = \frac{1}{2}(Z^M)^{\top}c^{Y}Z^M + p\varphi(Y_{-},Z^M,W^M)-\{W^M(v)+1-e^{W^M(v)}\}*F^Y$.
		Plugging the above expression into~\eqref{eq:fMN}, we obtain the forward PDE~\eqref{eq:PDEPig}.
		
		(ii) Suppose that $(\Pi,g)$ is a classical solution of the forward equation \eqref{eq:PDEPig}. Then, Eq.~\eqref{eq:dPiM} reduces to
		\begin{align*}
		d\Pi(Y,{\cal Y}) &= -f(Y_{-},\nabla_y\Pi(Y_{-},{\cal Y}_{-}),\Pi(Y_{-}+v,{\cal Y}_{-})-\Pi(Y_{-},{\cal Y}_{-}))dt + \nabla_y\Pi(Y_{-},{\cal Y}_{-})dY^c\nonumber\\
		&\quad +\{\Pi(Y_{-}+v,{\cal Y}_{-})-\Pi(Y_{-},{\cal Y}_{-})\}*d(\mu^Y-\nu^Y).
		\end{align*}
		Using the above representation, a solution of \eqref{eq:BSDEspecial} is of the form: $\Pi^M=\Pi(Y,{\cal Y})$, $Z^M=\nabla_y\Pi(Y_{-},{\cal Y}_{-})$, and $W^M(v)=\Pi(Y_{-}+v,{\cal Y}_{-})-\Pi(Y_{-},{\cal Y}_{-})$. Hence, $(\Pi^M,Z^M,W^M)$ admits a factor representation. \hfill$\Box$

\noindent{\bf Proof of Theorem~\ref{thm:solhjb}.}\quad By applying the Cole-Hopf transform to~\eqref{eq:forhjbgamma} given by $\Gamma(t,y,z)=e^{L(t,y,z)}$ and we have that, for $(t,y,z)\in[0,\infty)\times\R^d\times\R$, 
	\begin{align}\label{eq:hjb3}
	0&=\partial_tL(t,y,z)+\partial_zL(t,y,z)g(t,y) + \nabla_yL(t,y,z)^{\top}b^Y(t,y) + \frac{1}{2}{\rm tr}[\nabla_{yy}^2L(t,y,z) c^{Y}(t,y)]\nonumber\\ &\quad+\frac{1}{2}\nabla_yL(t,y,z)^{\top}c^{Y}(t,y)\nabla_yL(t,y,z)
	+ p \sup_{\pi\in{C}_t(y)\cap{\cal C}_{0,t}(y)}\bigg\{\pi^{\top}\big(b(t,y)+c^{RY}(t,y)\nabla_yL(t,y,z)\big)+\frac{p-1}{2}\pi^{\top}c(t,y)\pi\nonumber\\
	&\quad+\{p^{-1}(1+\pi^{\top}u)^pe^{L(t,y+v,z)-L(t,y,z)}-p^{-1}-\pi^{\top}h(u)-p^{-1}\nabla_yL(t,y,z)^{\top}h^Y(v)\}*\overline{F}(t,y)\bigg\}.
	\end{align}
	We rewrite the terms in the last line of~\eqref{eq:hjb3} as:
	\begin{align*}
	&\{p^{-1}(1+\pi^{\top}u)^pe^{L(t,y+v,z)-L(t,y,z)}-p^{-1}-\pi^{\top}h(u)-p^{-1}\nabla_yL(t,y,z)^{\top}h^Y(v)\}*\overline{F}(t,y)\nonumber\\
	&\qquad=\{(p^{-1}(1+\pi^{\top}u)^p-p^{-1})(e^{L(t,y+v,z)-L(t,y,z)}-1)\}*\overline{F}(t,y)+\{p^{-1}(1+\pi^{\top}u)^p-p^{-1}-\pi^{\top}h(u)\}*F(t,y)\nonumber\\
	&\qquad\quad+p^{-1}\{e^{L(t,y+v,z)-L(t,y,z)}-1-\nabla_yL(t,y,z)^{\top}h^Y(v)\}*F^Y(t,y).
	\end{align*}
Then~\eqref{eq:hjb3} is reduced to
\begin{align}\label{eq:hjb4}
	0
&=\partial_tL(t,y,z)+\partial_zL(t,y,z)g(t,y) + {\cal A}^YL(t,y,z)+\frac{1}{2}\nabla_yL(t,y,z)^{\top}c^{Y}(t,y)\nabla_yL(t,y,z)\nonumber\\
&\quad+\{e^{L(t,y+v,z)-L(t,y,z)}-1-(L(t,y+v,z)-L(t,y,z))\}*F^Y(t,y)\nonumber\\
&\quad+ p \sup_{\pi\in{C}_t(y)\cap{\cal C}_{0,t}(y)}\bigg\{\pi^{\top}(b(t,y)+c^{RY}(t,y)\nabla_yL(t,y,z))+\frac{p-1}{2}\pi^{\top}c(t,y)\pi\\
&\quad+\{p^{-1}(1+\pi^{\top}u)^p-p^{-1}-\pi^{\top}h(u)\}*F(t,y)
	+\{(p^{-1}(1+\pi^{\top}u)^p-p^{-1})(e^{L(t,y+v,z)-L(t,y,z)}-1)\}*\overline{F}(t,y)\bigg\}.\nonumber
	\end{align}
	Recall the integral-differential operator ${\cal A}^Y $ defined by \eqref{eq:A}. Using the expression of the function $f$ given by \eqref{eq:driver2222} in Lemma~{A.3}, it follows  from~\eqref{eq:hjb4} that
	\begin{align*}
	0&=\partial_tL(t,y,z)+\partial_zL(t,y,z)g(t,y) + {\cal A}^YL(t,y,z)
	+f(t,y,\nabla_yL(t,y,z),L(t,y+v,z)-L(t,y,z)).
	\end{align*}
Using Lemma~A.3 again, $(\Pi,\tilde{g})$ satisfies the above equation. Then $(\Gamma,g)=(e^{\Pi},\tilde{g})$ is a solution of the forward HJB equation \eqref{eq:forhjbgamma}. This completes the proof of the theorem. \hfill$\Box$

\noindent{\bf Proof of Lemma~\ref{lem:PiMY}.}\quad For the local martingale $M$ given in \eqref{eq:M12}, Theorem~\ref{thm:existence} guarantees that $(\Pi^M,Z^M,W^M)$ with the integral representation~\eqref{eq:BSDEspecial} given by \eqref{eq:BSDE-sol} admits the form:
		\begin{align*}
		\Pi^M &=-p\int_0^{\cdot}\sup_{\pi\in{C}_s(y)\cap{\cal C}_{0,s}(y)}\Phi_p^M(\pi)ds+M-\frac{1}{2}\langle M^c,M^c\rangle^P-\{e^{\theta(Y_{-},v)}-1-\theta(Y_{-},v)\}*\mu^Y,\nonumber\\
		Z^M &=\Lambda(Y_{-}),\quad W^M(v)=\theta(Y_{-},v).
		\end{align*}
		We next compute $\int_0^{\cdot}\sup_{\pi\in{C}_s(y)\cap{\cal C}_{0,s}(y)}\Phi_p^M(\pi)ds$. 
Note that $\int_0^{\cdot}\sup_{\pi\in{C}_s(y)\cap{\cal C}_{0,s}(y)}\Phi_p^M(\pi)ds=\int_0^{\cdot}\tilde{\varphi}(Y_{s})ds$, where
		\begin{align}\label{eq:PhiMspecial222}
		\tilde{\varphi}(t,y)&=\sup_{\pi\in{C}_t(y)\cap{\cal C}_{0,t}(y)}\Bigg\{\pi^{\top}\left(b(t,y) +c(t,y)H(t,y)\right)+\frac{p-1}{2}\pi^{\top} c(t,y)\pi\\
		&\quad+\int\{p^{-1}(1+\pi^{\top}u)^p-p^{-1}-\pi^{\top}h(u)\}F(t,y,du)
		+\int\{p^{-1}(1+\pi^{\top}u)^p-p^{-1}\}\Xi(t,y,u)F(t,y,du)\Bigg\},\nonumber
		\end{align}
		{for $(t,y)\in[0,\infty)\times\R^d$}. Using the first identity in Lemma~\ref{lem:HtildeHc}, we obtain that
		\begin{align*}
		\{p^{-1}(1+\pi^{\top}u)^p-p^{-1}\}\Xi(u)*\mu&=\sum\{p^{-1}(1+\pi^{\top}\Delta R)^p-p^{-1}\}\Xi(\Delta R)\nonumber\\
		&=\sum\{p^{-1}(1+\pi^{\top}\Delta R)^p-p^{-1}\}\{e^{\theta(\Delta Y)}-1\}\nonumber\\
		&=\{p^{-1}(1+\pi^{\top}u)^p-p^{-1}\}\{e^{\theta(v)}-1\}*\overline{\mu}.
		\end{align*}
		By taking the dual predictable projection on both sides of the above equality, we  conclude that
		\begin{align*}
		\{p^{-1}(1+\pi^{\top}u)^p-p^{-1}\}\Xi(u)*F=\{p^{-1}(1+\pi^{\top}u)^p-p^{-1}\}\{e^{\theta(v)}-1\}*\overline{F}.
		\end{align*}
		By \eqref{eq:PhiMspecial222}, we deduce that $\tilde{\varphi}(t,y)={\varphi}(t,y)$. Then, the relation \eqref{eq:solutionspeci0} follows from \eqref{eq:M12}. \hfill$\Box$
	
\noindent{\bf Proof of Lemma~\ref{lem:PiMY22}.}\quad Since the factor $Y$ is a special semimartingale,  it admits the canonical representation
		\begin{align}\label{eq:Yrep}
		Y = Y_0 + B^Y + Y^c + v*(\mu^Y-\nu^Y).
		\end{align}
		If $\Lambda\equiv\sigma$ and $\theta(t,y,v)=\sigma^{\top}v$, it follows from \eqref{eq:Yrep} that
		\begin{align*}
		\Lambda\cdot Y^c+\theta(v)*(\mu^Y-\nu^Y)=\sigma^{\top}\{Y^c+v*(\mu^Y-\nu^Y)\}=\sigma^{\top}\left(Y-Y_0-\int_0^{\cdot}b_s^Yds\right).
		\end{align*}
		Plugging the equality above into~\eqref{eq:solutionspeci0}, we obtain \eqref{eq:solutionspeci022}. \hfill$\Box$\\

\section*{Acknowledgements}
We acknowledge the constructive suggestions of two anonymous referees, which contributed to improve the quality of this manuscript. {L. Bo has been  supported by NSFC grant no. 11971368 and National Center for Applied Mathematics of Shaanxi. A. Capponi has been supported in part by the NSF grant no. DMS-1716145. C. Zhou has been supported by Singapore MOE AcRF grant R-146-000-271-112 and NSFC grant 11871364.}

\end{APPENDICES}

\bibliographystyle{informs2014}
\bibliography{referencelist}

\begin{thebibliography}{26}
\providecommand{\natexlab}[1]{#1}
\providecommand{\url}[1]{\texttt{#1}}
\providecommand{\urlprefix}{URL }

\bibitem[{Avanesyan et~al.(2020)Avanesyan, Shkolnikov, \protect\BIBand{}
  Sircar}]{AvanesyanShkolnikovSircar18}
Avanesyan L, Shkolnikov M, Sircar R (2020) Construction of a class of forward
  performance processes in stochastic factor models, and an extension of
  {W}idder's theorem. \emph{Finance Stoch.} 24(4):981--1011.

\bibitem[{Briand \protect\BIBand{} Confortola(2008)}]{BriandConfortola}
Briand P, Confortola F (2008) Quadratic {BSDE}s with random terminal time and
  elliptic {PDE}s in infinite dimension. \emph{Electron. J. Probab.}
  13:1529--1561.

\bibitem[{Chong et~al.(2019)Chong, Hu, Liang, \protect\BIBand{}
  Zariphopoulou}]{chongetal2018}
Chong W, Hu Y, Liang G, Zariphopoulou T (2019) An ergodic {BSDE} approach to
  forward entropic risk measures: representation and large-maturity behavior.
  \emph{Finance Stoch.} 23:239--273.

\bibitem[{Chong \protect\BIBand{} Liang(2019)}]{chongliang2018}
Chong W, Liang G (2019) Optimal investment and consumption with forward
  preferences and uncertain parameters. \emph{arXiv}
  1807.01186v2(q-fin.MF):1--29.

\bibitem[{Choulli \protect\BIBand{} Ma(2017)}]{choulli17}
Choulli T, Ma J (2017) Explicit description of {HARA} forward utilities and
  their optional portfolios. \emph{Theory Probab. Appl.} 61:57--93.

\bibitem[{Cohen \protect\BIBand{} Hu(2013)}]{cohenhu2013}
Cohen S, Hu Y (2013) Ergodic {BSDE}s driven by {M}arkov chains. \emph{SIAM J.
  Contr. Optim.} 51:4138--4168.

\bibitem[{Jacod(1979)}]{Jacod}
Jacod J (1979) \emph{Calcul Stochastique et Probl\`emes de Martingales}
  (Springer-Verlag, New York).

\bibitem[{Jacod \protect\BIBand{} Shiryaev(1987)}]{JacodShiryaev}
Jacod J, Shiryaev A (1987) \emph{Limit Theorems for Stochastic Processes}
  (Springer-Verlag, New York).

\bibitem[{Karatzas \protect\BIBand{} Kardaras(2007)}]{Kara-Kard}
Karatzas I, Kardaras C (2007) The num\'eraire portfolio in semimartingale
  financial models. \emph{Finance Stoch.} 11:447--493.

\bibitem[{Kardaras(2009)}]{Kardaras09mf}
Kardaras C (2009) No-free-lunch equivalences for exponential {L}\'evy models
  under convex constraints on investment. \emph{Math. Finance} 19:161--187.

\bibitem[{Li \protect\BIBand{} Zhao(2019)}]{LiZhao19}
Li J, Zhao N (2019) Representation of asymptotic values for nonexpansive
  stochastic control systems. \emph{Stoch. Process. Appl.} 129:634--673.

\bibitem[{Liang \protect\BIBand{} Zariphopoulou(2017)}]{LiangZari17}
Liang G, Zariphopoulou T (2017) Representation of homothetic forward
  performance processes in stochastic factor models via ergodic and infinite
  horizon {BSDE}. \emph{SIAM J. Financial Math.} 8:344--372.

\bibitem[{Liptser \protect\BIBand{} Shiryayev(1986)}]{LipShir}
Liptser R, Shiryayev A (1986) \emph{Theory of Martingale} (Kluwer Academic
  Publishers, The Netherlands).

\bibitem[{Merton(1969)}]{Merton}
Merton R (1969) Lifetime portfolio selection under uncertainty: the
  continuous-time case. \emph{Rev. Econ. Stats.} 51:247--257.

\bibitem[{Musiela \protect\BIBand{} Zariphopoulou(2008)}]{MusiZari08}
Musiela M, Zariphopoulou T (2008) Optimal asset allocation under forward
  exponential criteria. \emph{Markov Processes and Related Topics: A
  Festschrift for T.G. Kurtz}, volume~4, 285--300 (Institute of Mathematical
  Statistics Collections),
  \urlprefix\url{http://dx.doi.org/10.1214/074921708000000435}.

\bibitem[{Musiela \protect\BIBand{}
  Zariphopoulou(2010{\natexlab{a}})}]{MusiZari10a}
Musiela M, Zariphopoulou T (2010{\natexlab{a}}) Portfolio choice under
  space-time monotone performance criteria. \emph{SIAM J. Financial Math.}
  4:326--365.

\bibitem[{Musiela \protect\BIBand{}
  Zariphopoulou(2010{\natexlab{b}})}]{MusiZari10}
Musiela M, Zariphopoulou T (2010{\natexlab{b}}) Stochastic partial differential
  equations and portfolio choice. \emph{In Contemporary Quantitative Finance}
  195--216.

\bibitem[{Nadtochiy \protect\BIBand{} Tehranchi(2017)}]{NadTeh17}
Nadtochiy S, Tehranchi M (2017) Optimal investment for all time horizons and
  {M}artin boundary of space-time diffusions. \emph{Math. Finance} 27:438--470.

\bibitem[{Nadtochiy \protect\BIBand{} Zariphopoulou(2014)}]{NadZar14}
Nadtochiy S, Zariphopoulou T (2014) A class of homothetic forward investment
  performance processes with non-zero volatility. \emph{In: Kabanov, Y.,
  Rutkowski, M., Zariphopoulou, T. (eds) Inspired by Finance} 475--504.

\bibitem[{Nutz(2012{\natexlab{a}})}]{Nutzaap}
Nutz M (2012{\natexlab{a}}) The {B}ellman equation for power utility
  maximization with semimartingales. \emph{Ann. Appl. Probab.} 22:363--406.

\bibitem[{Nutz(2012{\natexlab{b}})}]{Nutzmf}
Nutz M (2012{\natexlab{b}}) Power utility maximization in constrained
  exponential {L}\'evy models. \emph{Math. Finance} 22:690--709.

\bibitem[{Richou(2009)}]{Richou2009}
Richou A (2009) Ergodic {BSDE}s and related {PDE}s with {N}eumann boundary
  conditions. \emph{Stoch. Process. Appl.} 119:2945--2969.

\bibitem[{Rockafellar(1970)}]{Rockafellar}
Rockafellar R (1970) \emph{Convex Analysis} (Princeton University Press,
  Princeton).

\bibitem[{Shkolnikov et~al.(2016)Shkolnikov, Sircar, \protect\BIBand{}
  Zariphopoulou}]{Shkolnikov}
Shkolnikov M, Sircar R, Zariphopoulou T (2016) Asymptotic analysis of forward
  performance processes in incomplete markets and their ill-posed {HJB}
  equations. \emph{SIAM J. Financial Math.} 7:588--618.

\bibitem[{Strub \protect\BIBand{} Zhou(2021)}]{StrubZhou2020}
Strub M, Zhou X (2021) Evolution of the {A}rrow-{P}ratt measure of
  risk-tolerance for predictable forward utility processes. \emph{Finance
  Stoch.} 25:331--358.

\bibitem[{Zitkovic(2009)}]{Zitkovic}
Zitkovic G (2009) A dual characterization of self-generation and exponential
  forward performances. \emph{Ann. Appl. Probab.} 19:2176--2210.

\end{thebibliography}

\end{document}